\newtheorem{theorem}{Theorem}[section]
\newtheorem{corollary}[theorem]{Corollary}
\newtheorem{lemma}[theorem]{Lemma}
\newtheorem{proposition}[theorem]{Proposition}
\theoremstyle{definition}
\newtheorem{definition}[theorem]{Definition}
\newtheorem{examples}[theorem]{Examples}
\newtheorem{remark}[theorem]{Remark}
\newtheorem{note}[theorem]{Note}
\numberwithin{equation}{section}
\begin{document}
\author[D.I. Dais]{Dimitrios I. Dais}
\address{University of Crete, Department of Mathematics and Applied
Mathematics, Division Algebra and Geometry, Voutes Campus, P.O. Box 2208,
GR-70013, Heraklion, Crete, Greece}
\email{ddais@math.uoc.gr}
\subjclass[2010]{14M25 (Primary); 14Q10, 52B20 (Secondary)}
\title[Toric log Del Pezzo surfaces with one singularity]{Toric log Del
Pezzo Surfaces \\
with One Singularity}
\date{}

\begin{abstract}
This paper focuses on the classification of all toric log Del Pezzo surfaces
with exactly one singularity up to isomorphism, and on the description of
how they are embedded as intersections of finitely many quadrics into
suitable projective spaces.
\end{abstract}

\maketitle

\section{Introduction\label{INTRO}}

\noindent A smooth compact complex surface $X$ is called \textit{del Pezzo} 
\textit{surface} if its anticanonical divisor $-K_{X}$ is ample, i.e., if
the rational map $\Phi_{\left\vert -mK_{X}\right\vert }:X\dashrightarrow 
\mathbb{P}(\left\vert -mK_{X}\right\vert )$ associated to the linear system $%
\left\vert -mK_{X}\right\vert $ becomes a closed embedding with 
\begin{equation*}
\mathcal{O}_{X}(-mK_{X})\cong\Phi_{\left\vert -mK_{X}\right\vert }^{\ast
}\left( \mathcal{O}_{\mathbb{P}(\left\vert -mK_{X}\right\vert )}\left(
1\right) \right) ,
\end{equation*}
for a suitable positive integer $m.$ (Pasquale del Pezzo \cite{del Pezzo}
initiated the study of these surfaces in $1887.$) The \textit{degree} $%
\deg(X)$ of a del Pezzo surface $X$ is defined to be the self-intersection
number $(-K_{X})^{2}.$ The main classification result about these surfaces
can be stated as follows (see \cite[Theorem 24.4, pp. 119-121]{Manin}):

\begin{theorem}
\label{CLDELPS}Let $X$ be a del Pezzo surface of degree $d:=\deg(X).$ We
have necessarily $1\leq d\leq9,$ and $X$ is classified by $d$\emph{%
:\smallskip \newline
(i)} If $d=9,$ then $X$ is isomorphic to the projective plane $\mathbb{P}_{%
\mathbb{C}}^{2}.\smallskip$\newline
\emph{(ii)} If $d=8,$ then $X$ is isomorphic either to $\mathbb{P}_{\mathbb{C%
}}^{1}\times\mathbb{P}_{\mathbb{C}}^{1}$ or to the blow-up of the projective
plane $\mathbb{P}_{\mathbb{C}}^{2}$ at one point.\smallskip\ \newline
\emph{(iii)} If $1\leq d\leq7,$ then $X$ is isomorphic to the blow-up of the
projective plane $\mathbb{P}_{\mathbb{C}}^{2}$ at $9-d$ points$.$
\end{theorem}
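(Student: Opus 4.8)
The plan is to prove the classification in three movements: first show that every del Pezzo surface is rational, then bound the degree by a Noether-formula count, and finally carry out a minimal-model analysis to pin down the isomorphism type.

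\smallskip
\noindent\emph{Step 1 (rationality).} Since $-K_{X}$ is ample, I would apply Kodaira vanishing to the ample line bundle $\mathcal{O}_{X}(-K_{X})$, writing $\mathcal{O}_{X}=\mathcal{O}_{X}(K_{X})\otimes\mathcal{O}_{X}(-K_{X})$, to obtain $H^{i}(X,\mathcal{O}_{X})=0$ for $i\geq 1$. This yields $q:=h^{1}(\mathcal{O}_{X})=0$ and $p_{g}:=h^{2}(\mathcal{O}_{X})=h^{0}(K_{X})=0$. Moreover every plurigenus vanishes, $P_{n}=h^{0}(nK_{X})=0$ for $n\geq 1$, because $nK_{X}$ is anti-ample and hence has no sections. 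Castelnuovo's rationality criterion ($q=P_{2}=0$) then forces $X$ to be a rational surface.

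\smallskip
\noindent\emph{Step 2 (degree bound).} From $q=p_{g}=0$ I get $\chi(\mathcal{O}_{X})=1-q+p_{g}=1$, so Noether's formula gives
\[
K_{X}^{2}+e(X)=12\chi(\mathcal{O}_{X})=12 .
\]
Because $X$ is rational one has $b_{1}=b_{3}=0$ and, since $p_{g}=0$ makes all of $H^{2}$ of type $(1,1)$, $b_{2}=\rho(X)$; hence $e(X)=2+\rho(X)$. Therefore $d=K_{X}^{2}=10-\rho(X)$. As $\rho(X)\geq 1$ and $d=(-K_{X})^{2}>0$ by ampleness, I conclude $1\leq d\leq 9$, with each drop of one unit in $d$ matched by one extra blow-up.

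\smallskip
\noindent\emph{Step 3 (structure).} The key observation is that $X$ carries no irreducible curve $C$ with $C^{2}\leq -2$: by adjunction $C^{2}=2p_{a}(C)-2+C\cdot(-K_{X})\geq -2+1=-1$, using $C\cdot(-K_{X})\geq 1$. Choosing a birational morphism $\pi\colon X\to X_{\min}$ onto a minimal rational surface, $X_{\min}$ is either $\mathbb{P}^{2}$ or a Hirzebruch surface $\mathbb{F}_{n}$. If $n\geq 2$, the negative section of $\mathbb{F}_{n}$ has square $-n\leq -2$, and its strict transform on $X$ would have square $\leq -2$, contradicting the observation; thus $X_{\min}\in\{\mathbb{P}^{2},\ \mathbb{F}_{0}=\mathbb{P}^{1}\times\mathbb{P}^{1}\}$. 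One then reads off the cases by $\rho$: for $\rho=1$ $(d=9)$, $X\cong\mathbb{P}^{2}$; for $\rho=2$ $(d=8)$, $X$ is either $\mathbb{P}^{1}\times\mathbb{P}^{1}$ or $\mathbb{F}_{1}=\mathrm{Bl}_{1}\mathbb{P}^{2}$; and for $\rho\geq 3$ $(d\leq 7)$, if $X_{\min}=\mathbb{P}^{2}$ we are done, while if $X_{\min}=\mathbb{P}^{1}\times\mathbb{P}^{1}$ I would use that blowing up one point of $\mathbb{P}^{1}\times\mathbb{P}^{1}$ is isomorphic to blowing up $\mathbb{P}^{2}$ at two points, so that $X$ is again a blow-up of $\mathbb{P}^{2}$, necessarily at $9-d$ points.

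\smallskip
\noindent The main obstacle, beyond invoking the surface-classification machinery in Step~1, is the bookkeeping in the last case: reorganizing the chain of contractions so that $X$ is presented as a blow-up of $\mathbb{P}^{2}$ rather than of $\mathbb{F}_{0}$, which amounts to the elementary transformation $\mathbb{F}_{0}\leftrightarrow\mathbb{F}_{1}$ and the verification that exactly $9-d$ points are blown up. Controlling the mutual position of these points (so that $-K_{X}$ genuinely stays ample, i.e. no curve drops to $C^{2}\leq -2$) is the delicate geometric ingredient if one also wants the converse, though it is the rationality and minimal-model steps that carry the conceptual weight of the stated classification.
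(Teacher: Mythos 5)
Your proposal cannot be compared against an internal argument, because the paper does not prove Theorem \ref{CLDELPS} at all: it is quoted as classical background, with the proof delegated to Manin \cite[Theorem 24.4, pp. 119--121]{Manin}. What you have written is essentially the standard argument found in such references, and its main steps are sound: Kodaira vanishing applied to the ample bundle $\mathcal{O}_{X}(-K_{X})$ gives $q=p_{g}=0$ and the vanishing of all plurigenera, so Castelnuovo's criterion yields rationality; Noether's formula with $\chi(\mathcal{O}_{X})=1$ and $b_{2}=\rho(X)$ (via the exponential sequence, using $H^{1}(\mathcal{O}_{X})=H^{2}(\mathcal{O}_{X})=0$) gives $d=10-\rho(X)$, hence $1\leq d\leq 9$; and the adjunction observation $C^{2}\geq -1$ for every irreducible curve correctly eliminates the minimal models $\mathbb{F}_{n}$, $n\geq 2$, leaving $\mathbb{P}_{\mathbb{C}}^{2}$ and $\mathbb{F}_{0}$, after which the case division by $\rho$ is as you describe.

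One point, however, is misplaced in your closing paragraph. The statement of case (iii) asserts that $X$ is the blow-up of $\mathbb{P}_{\mathbb{C}}^{2}$ at $9-d$ \emph{points}, i.e.\ at distinct proper points of the plane, whereas your minimal-model argument a priori only exhibits $X$ as a composition of $9-d$ point blow-ups, some of whose centers could be infinitely near (lying on exceptional curves of earlier blow-ups). Ruling this out is part of the \emph{forward} direction, not merely of the converse as you suggest: it follows by one more application of your own Step 3, since a center lying on an exceptional $(-1)$-curve of a previous blow-up would produce on $X$ an irreducible curve with self-intersection $\leq -2$, contradicting $C^{2}\geq -1$. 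With that one-line addition (and the $\mathbb{F}_{0}\leftrightarrow\mathbb{F}_{1}$ elementary transformation you already invoke), the classification as stated is complete; the genuinely delicate general-position conditions on the points are needed only for the converse, which the theorem does not claim.
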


\noindent{}For $6\leq d\leq9,$ such an $X$ is \textit{toric}, i.e., it
contains a $2$-dimensional algebraic torus $\mathbb{T}$ as a dense open
subset, and is equipped with an algebraic action of $\mathbb{T}$ on $X$
which extends the natural action of $\mathbb{T}$ on itself. Taking into
account the description of smooth compact toric surfaces by the ($\mathbb{Z}$%
-weighted) circular graphs (introduced in \cite[Chapter I, \S 8]{Oda-Miyake}%
, \cite[pp. 42-46]{Oda}), as well as \cite[Proposition 6]{Batyrev} and \cite[%
Proposition 2.7]{WW}, Oda expresses in \cite[Proposition 2.21, pp. 88-89]%
{Oda} this fact in the language of toric geometry as follows:

\begin{theorem}
\label{CLTORDP}There exist five distinct toric del Pezzo surfaces up to
isomorphism. They correspond to the circular graphs \emph{(}with weights $%
-1,0,1$\emph{)} shown in Figure \emph{\ref{Fig.1}}. They are \emph{(i)} $%
\mathbb{P}_{\mathbb{C}}^{2},$ \emph{(ii)} $\mathbb{P}_{\mathbb{C}}^{1}\times%
\mathbb{P}_{\mathbb{C}}^{1}$ \emph{(}$\cong\mathbb{F}_{0}$\emph{)}$,$ \emph{%
(iii)} the Hirzebruch surface $\mathbb{F}_{1},$ \emph{(iv)} the equivariant
blow-up of $\ \mathbb{P}_{\mathbb{C}}^{2}$ at two of the $\mathbb{T}$-fixed
points, and \emph{(v)} the equivariant blow-up of $\mathbb{P}_{\mathbb{C}%
}^{2}$ at the three $\mathbb{T}$-fixed points.
\end{theorem}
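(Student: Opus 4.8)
The plan is to translate the statement entirely into the combinatorics of complete nonsingular fans in $N_{\mathbb{R}}\cong\mathbb{R}^2$ and to read off the list from two numerical invariants. Recall that a smooth compact toric surface $X=X_{\Sigma}$ is given by a complete fan $\Sigma$ whose rays are generated by primitive vectors $v_1,\dots,v_n$ (written in cyclic order), any two consecutive of which form a $\mathbb{Z}$-basis of $N$; the associated circular graph carries on its $i$-th vertex the weight $c_i:=D_i^2$, the self-intersection number of the invariant curve $D_i$ attached to $v_i$, and these weights are governed by the wall relations $v_{i-1}+v_{i+1}=-c_i\,v_i$. First I would record the two facts that drive everything. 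On the one hand, every complete nonsingular fan satisfies the identity $\sum_{i=1}^n c_i = 12-3n$ (equivalently $\sum_i(c_i+3)=12$), a standard consequence of the fan winding once around the origin, or of Noether's formula together with $e(X)=n$. On the other hand, $-K_X=\sum_i D_i$ is ample exactly when its support function, which takes the value $1$ on each $v_i$, is strictly upper convex; evaluating the linear extension from the cone $\langle v_{i-1},v_i\rangle$ at $v_{i+1}=-c_i v_i-v_{i-1}$ turns this into the inequality $-c_i-1<1$ at every ray, i.e. $c_i\ge -1$. Thus \emph{del Pezzo} becomes the purely combinatorial requirement that all weights be at least $-1$.

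Combining the two facts bounds the number of rays at once: since each $c_i\ge -1$, we get $12-3n=\sum_i c_i\ge -n$, whence $n\le 6$, while completeness forces $n\ge 3$. It then remains to run through $n\in\{3,4,5,6\}$. For $n=3$ the Picard number is $n-2=1$, forcing $X\cong\mathbb{P}_{\mathbb{C}}^2$ with weights $(1,1,1)$ (indeed $\sum c_i=3$). For $n=4$ every smooth complete toric surface is a Hirzebruch surface $\mathbb{F}_a$ ($a\ge 0$) with weights $(-a,0,a,0)$, and the constraint $-a\ge -1$ leaves exactly $\mathbb{F}_0\cong\mathbb{P}_{\mathbb{C}}^1\times\mathbb{P}_{\mathbb{C}}^1$ and $\mathbb{F}_1$. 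For $n=5$ one has $\sum c_i=-3$ with five weights $\ge -1$, and for $n=6$ one has $\sum c_i=-6$, which forces $c_1=\dots=c_6=-1$.

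To pin down the surfaces themselves, and not merely the weight multisets, I would use the blow-up/blow-down structure of smooth complete toric surfaces: any such surface with $n\ge 4$ carries a $(-1)$-ray whose equivariant contraction lowers $n$ by one and raises the two neighbouring weights by one, hence preserves the inequality $c_i\ge -1$. Running this in reverse, every del Pezzo fan with $n=5$ is obtained by an equivariant blow-up of $\mathbb{F}_0$ or $\mathbb{F}_1$ at a torus-fixed point whose two adjacent weights are $\ge 0$, yielding the single cyclic weight pattern $(0,0,-1,-1,-1)$, namely the blow-up of $\mathbb{P}_{\mathbb{C}}^2$ at two $\mathbb{T}$-fixed points; and every del Pezzo fan with $n=6$ is the further such blow-up, forced into the hexagonal pattern $(-1,-1,-1,-1,-1,-1)$, the blow-up of $\mathbb{P}_{\mathbb{C}}^2$ at all three $\mathbb{T}$-fixed points. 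Finally I would invoke that for these surfaces the weighted circular graph is a complete isomorphism invariant (two fans with the same cyclic weight sequence, up to rotation and reflection, differ by a lattice automorphism), so the five patterns represent five pairwise non-isomorphic surfaces and exhaust the list.

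I expect the genuine obstacle to lie not in the numerics, which collapse quickly, but in this realizability-and-uniqueness step: one must ensure that the constraints $\{\,c_i\ge -1,\ \sum_i c_i=12-3n\,\}$ single out \emph{exactly} the fans above, and that abstractly admissible multisets that are not realizable as del Pezzo fans (e.g. $\{1,-1,-1,-1,-1\}$ for $n=5$) are correctly discarded. This is precisely what the blow-down induction down to a minimal del Pezzo model $\mathbb{P}_{\mathbb{C}}^2$, $\mathbb{F}_0$, or $\mathbb{F}_1$ accomplishes, since it reconstructs each admissible fan from a known one rather than leaving the weight sequence unconstrained.
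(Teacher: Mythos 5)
Your argument is correct, but note that the paper itself contains no proof of this statement: Theorem \ref{CLTORDP} is quoted as a known classification, with the work delegated to Oda (Proposition 2.21), Batyrev, and Watanabe--Watanabe. So your proof is necessarily an independent reconstruction rather than a parallel of anything in the text, and what you have reconstructed is essentially the standard argument behind those references: (a) the identity $\sum_i c_i = 12-3n$ (Noether's formula with $e(X)=n$, or the winding argument); (b) the translation of ampleness of $-K_X$ into the purely combinatorial condition $c_i\geq -1$ for every ray (equivalently, by the toric Nakai criterion, $-K_X\cdot D_i=c_i+2>0$); (c) the resulting bound $3\leq n\leq 6$; and (d) the toric Castelnuovo blow-down induction to the minimal models $\mathbb{P}_{\mathbb{C}}^2$, $\mathbb{F}_0$, $\mathbb{F}_1$, combined with the fact that the vertex-weighted circular graph is a complete isomorphism invariant --- which is exactly the smooth case of the paper's Theorem \ref{CLASSIFTHM}. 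Your closing remark is also the right diagnosis: the delicate point is realizability and uniqueness, not the numerics, and the blow-down induction is precisely what disposes of spurious weight multisets such as $\{1,-1,-1,-1,-1\}$ for $n=5$, since every admissible fan is reconstructed as a blow-up of a known one.

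One slip, harmless in context: the assertion that \emph{any} smooth complete toric surface with $n\geq 4$ rays carries a $(-1)$-ray is false as stated ($\mathbb{F}_0$, and more generally $\mathbb{F}_a$ with $a\neq 1$, have none); the correct general statement requires $n\geq 5$. But you only ever contract a $(-1)$-ray on a \emph{del Pezzo} fan with $n\in\{5,6\}$, where existence follows from your own numerics: $\sum_i c_i=12-3n<0$ together with $c_i\geq -1$ forces some $c_i=-1$, and the contraction (which raises the two neighbouring weights by one) preserves both smoothness and the del Pezzo condition. With that reading the induction goes through, and the five cyclic patterns $(1,1,1)$, $(0,0,0,0)$, $(0,-1,0,1)$, $(0,0,-1,-1,-1)$, $(-1,-1,-1,-1,-1,-1)$ are exactly the graphs of Figure \ref{Fig.1}, pairwise non-isomorphic and realized by the five listed surfaces.
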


\begin{figure}[ht]
\includegraphics[height=3cm, width=11cm]{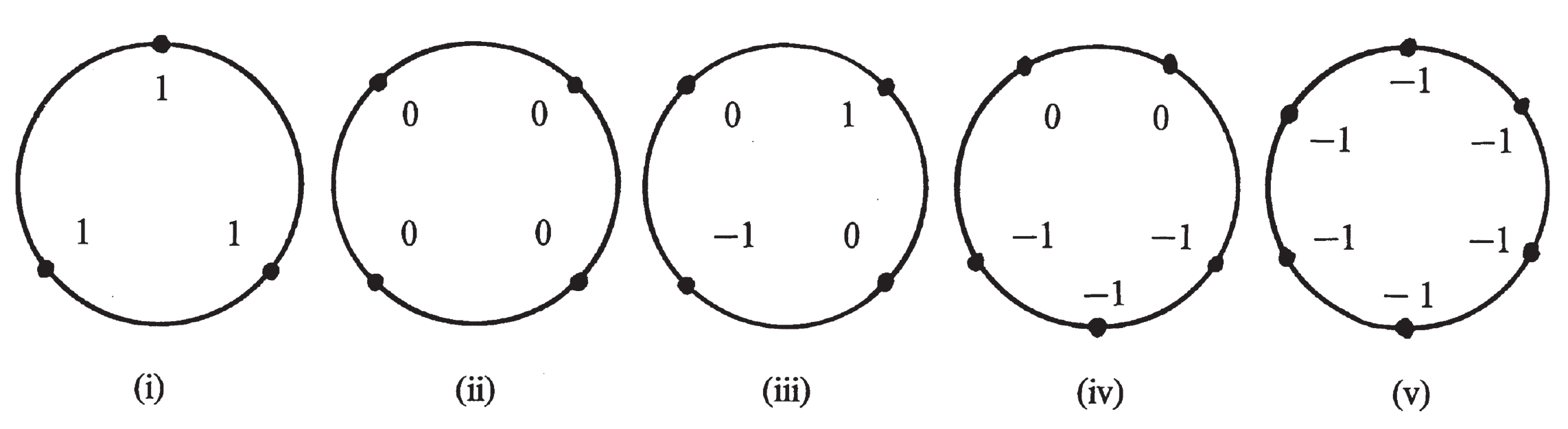}
\caption{}
\label{Fig.1}
\end{figure}

\begin{note}
The so-called \textit{Hirzebruch surfaces }(introduced in \cite[\S 2]%
{Hirzebruch})%
\begin{equation*}
\mathbb{F}_{\kappa}:=\left\{ \left. (\left[ z_{0}:z_{1}:z_{2}\right] ,\left[
t_{1}:t_{2}\right] )\in\mathbb{P}_{\mathbb{C}}^{2}\times \mathbb{P}_{\mathbb{%
C}}^{1}\ \right\vert \ z_{1}t_{1}^{\kappa}=z_{2}t_{2}^{\kappa}\right\} ,\
\kappa\in\mathbb{Z}_{\geq0},
\end{equation*}
are toric. $\mathbb{F}_{\kappa}$ is usually identified with the total space $%
\mathbb{P}(\mathcal{O}_{\mathbb{P}_{\mathbb{C}}^{1}}\oplus\mathcal{O}_{%
\mathbb{P}_{\mathbb{C}}^{1}}(\kappa))$\ of the $\mathbb{P}_{\mathbb{C}}^{1}$%
-bundle of degree $\kappa$ over $\mathbb{P}_{\mathbb{C}}^{1}.$ Furthermore,
every smooth compact toric surface which has Picard number $2$ is
necessarily isomorphic to a Hirzebruch surface (cf. \cite[Corollary 1.29, p.
45]{Oda}).
\end{note}

\noindent{}$\blacktriangleright$ \textit{The singular analogues}. A normal
compact complex surface $X$ with at worst log terminal singularities, i.e.,
quotient singularities, is called \textit{log del Pezzo} \textit{surface} if
its anticanonical Weil divisor $-K_{X}$ is a $\mathbb{Q}$-Cartier ample
divisor. The \textit{index} of such an $X$ is defined to be the smallest
positive integer $\ell$ for which $-\ell K_{X}$ is a Cartier divisor. The
family of log del Pezzo surfaces of fixed index $\ell$ is known to be
bounded. (See Nikulin \cite{Nikulin1}, \cite{Nikulin2}, \cite{Nikulin3}, and
Borisov \cite[Theorem 2.1, p. 332]{Borisov}.) Consequently, it seems to be
rather interesting to classify log del Pezzo surfaces of given index $\ell$.
This has been done for index $\ell=1$ by Hidaka \& Watanabe \cite{H-W} (by a
direct generalization of Theorem \ref{CLDELPS}) and Ye \cite{Ye}, and for
index $\ell=2$ by Alexeev \& Nikulin \cite{A-N1}, \cite{A-N2} (in terms of
diagrams of exceptional curves w.r.t. a suitable resolution of
singularities). Related results are due to Kojima \cite{Kojima2} (whenever
the Picard number equals $1$) and Nakayama \cite{Nakayama} (whose techniques
apply even if one replaces $\mathbb{C}$ with an algebraically closed field
of arbitrary characteristic). Based on Nakayama's arguments, Fujita \&
Yasutake \cite{F-Y} succeeded recently to extend the classification even for 
$\ell=3.$ But for indices $\ell\geq4$ the situation turns out to be much
more complicated, and (apart from some partial results as those in \cite%
{Fujita1}, \cite{Fujita2}) it is hard to expect a complete characterization
of these surfaces in this degree of generality.

On the other hand, if we restrict our study to the subclass of \textit{toric}
log del Pezzo surfaces, the classification problem becomes considerably
simpler: a) The only singularities which can occur are \textit{cyclic}
quotient singularities. b) To classify (not necessarily smooth) compact
toric surfaces up to isomorphism it is enough to use the graph-theoretic
method proposed in \cite[\S 5]{Dais1} (which generalizes Oda's graphs
mentioned above): Two compact toric surfaces are isomorphic to each other if
and only if their vertex singly- and edge doubly-weighted circular graphs (%
\textsc{wve}$^{2}$\textsc{c}-\textit{graphs}, for short) are isomorphic (see
below Theorem \ref{CLASSIFTHM}). A detailed examination of the
number-theoretic properties of the weights of these graphs led to the
classification of all toric log del Pezzo surfaces having Picard number $1$
and index $\ell\leq3$ in \cite[\S 6]{Dais1} and \cite{Dais2}. In fact, the
purely combinatorial part of the classification problem can be further
simplified because it can be reduced to the classification of the so-called 
\textit{LDP-polygons} (introduced in \cite{Dais-Nill}) \textit{up to
unimodular transformation}. For $\ell=1$ these are the sixteen \textit{%
reflexive polygons} (which were discovered by Batyrev in the 1980's). More
recently, Kasprzyk, Kreuzer \& Nill \cite[\S 6]{KKN} developed a particular
algorithm by means of which one creates an LDP-polygon (for given $\ell\geq2$%
) by fixing a \textquotedblleft special\textquotedblright\ edge and
following a prescribed successive addition of vertices, and produced in this
way the long lists of \textit{all} LDP-polygons for $\ell\leq17.$ (An
explicit study for each of these $15346$ LDP-polygons is available on the
webpage \cite{Br-Kas}.)$\smallskip$

\noindent{}$\blacktriangleright$ \textit{Restrictions on the singularities}.
At this point let us mention some remarkable results concerning the
singularities of log del Pezzo surfaces having Picard number $1$: Belousov
proved in \cite{Belousov1}, \cite{Belousov2} that each of these surfaces
admits at most $4$ singularities, Kojima \cite{Kojima1} described the nature
of the exceptional divisors w.r.t. the minimal resolution of those
possessing exactly one singularity, and Elagin \cite{Elagin} constructed
certain (non-toric) surfaces of this kind (realized as hypersurfaces of
degree $4n-2$ in $\mathbb{P}_{\mathbb{C}}^{3}(1,2,2n-1,4n-3)$), and proved
the existence of full exceptional sets of coherent sheaves over them.

Obviously, the maximal number of the singularities of a \textit{toric} log
del Pezzo surface equals the number of the edges of the corresponding
LDP-polygon. (For an upper bound of this number see \cite[Lemma 3.1]%
{Dais-Nill}.) In the present paper we classify all toric log del Pezzo
surfaces \textit{with exactly one singularity} (without laying a priori any
restrictions on the Picard number or on the index) \textit{up to isomorphism}%
.

\begin{theorem}
\label{MAIN1}Let $X_{Q}$ be a toric log del Pezzo surface \emph{(}associated
to an LDP-polygon $Q$\emph{)} with exactly one singularity. Then the
following hold true\emph{:\smallskip} \newline
\emph{(i)} The Picard number $\rho\left( X_{Q}\right) $ of $X_{Q}$ can take
only the values $1,2$ and $3.\smallskip$ \newline
\emph{(ii) }If we define for every positive integer $p$ the LDP-polygons%
\begin{equation}
\left\{ 
\begin{array}{l}
Q_{p}^{\left[ 1\right] }:=\text{\emph{conv}}\left( \left\{ \tbinom{1}{-1},%
\tbinom{p}{1},\tbinom{-1}{0}\right\} \right) ,\smallskip \\ 
Q_{p}^{\left[ 2\right] }:=\text{\emph{conv}}\left( \left\{ \tbinom{1}{-1},%
\tbinom{p}{1},\tbinom{p-1}{1},\tbinom{-1}{0}\right\} \right) ,\smallskip \\ 
Q_{p}^{\left[ 3\right] }:=\text{\emph{conv}}\left( \left\{ \tbinom{1}{-1},%
\tbinom{p}{1},\tbinom{p-1}{1},\tbinom{-1}{0},\tbinom{0}{-1}\right\} \right)%
\end{array}
\right\} ,  \label{DEFQS}
\end{equation}
then for $k\in\{1,2,3\}$ we have%
\begin{equation*}
\rho\left( X_{Q}\right) =k\Longleftrightarrow\exists p\in\mathbb{Z}%
_{>0}:X_{Q}\cong X_{Q_{p}^{\left[ k\right] }},
\end{equation*}
and the \textsc{wve}$^{2}$\textsc{c}-graphs $\mathfrak{G}_{\Delta _{Q_{p}^{%
\left[ k\right] }}}$ are those depicted in \emph{Figure \ref{Fig.2}}%
.\smallskip\ \newline
\emph{(iii)} $X_{Q_{p}^{\left[ 1\right] }}$ is isomorphic to the weighted
projective plane $\mathbb{P}_{\mathbb{C}}^{2}(1,1,p+1)$ and is obtained by
contracting the $\infty$-section $\mathbb{P}(\mathcal{O}_{\mathbb{P}_{%
\mathbb{C}}^{1}}(p+1))$ of $\ \mathbb{F}_{p+1}$. The surface $X_{Q_{p}^{%
\left[ 2\right] }}$ is obtained by blowing up a Hirzebruch surface $\mathbb{F%
}_{p}$ at one $\mathbb{T}$-fixed point, and contracting afterwards its $%
\infty$-section. $X_{Q_{p}^{\left[ 3\right] }}$ is obtained by blowing up $%
X_{Q_{p}^{\left[ 2\right] }}$ at one non-singular $\mathbb{T}$-fixed
point.\smallskip\ \newline
\emph{(iv)} If $X_{Q}$ has index $\ell\geq1$ and Picard number $\rho\left(
X_{Q}\right) =k\in\{1,2,3\},$ then for $\ell$ odd $\geq3$ either $X_{Q}\cong
X_{Q_{\ell-1}^{\left[ k\right] }}$ or $X_{Q}\cong X_{Q_{2\ell-1}^{\left[ k%
\right] }},$ whereas for $\ell \in\{1\}\cup2\mathbb{Z}$ we have $X_{Q}\cong
X_{Q_{2\ell-1}^{\left[ k\right] }}.$
\end{theorem}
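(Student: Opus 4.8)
The plan is to push everything into the combinatorics of the LDP-polygon $Q$ and only at the end translate back to geometry. By Theorem \ref{CLASSIFTHM} two such surfaces are isomorphic exactly when their polygons are $\mathrm{GL}_2(\mathbb{Z})$-equivalent, so it suffices to classify, up to unimodular transformation, the LDP-polygons with a single non-smooth edge. Writing the vertices cyclically as $v_1,\dots,v_n$ (each primitive, since $Q$ is an LDP-polygon), the $2$-cones of $\Delta_Q$ are $\mathrm{cone}(v_i,v_{i+1})$, their number equals the number of edges of $Q$, and $\rho(X_Q)=n-2$; such a cone is smooth precisely when $\det(v_i,v_{i+1})=1$. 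Hence ``exactly one singularity'' means, after relabelling, $\det(v_1,v_2)=m\ge 2$ while $\det(v_i,v_{i+1})=1$ for all other $i$. This gives $\rho\ge 1$ for free, and the entire content of (i) is the bound $n\le 5$.

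For (i) and the normal forms in (ii) I would normalize by $\mathrm{GL}_2(\mathbb{Z})$ so that $v_2=\binom{1}{0}$ and $v_3=\binom{0}{1}$, then propagate along the smooth chain $v_2,v_3,\dots,v_n,v_1$ by the two-dimensional identity $v_{i+1}=a_iv_i-v_{i-1}$, which holds at every vertex both of whose adjacent cones are smooth, i.e. for $i=3,\dots,n$. Convexity of $Q$ at such a vertex is equivalent to $a_i=\det(v_{i-1},v_{i+1})\le 1$, that is, to the self-intersection number $D_i^2=-a_i$ being $\ge -1$; this is exactly the del Pezzo positivity. The recursion then writes every $v_i$ as an explicit integral function of the sequence $(a_3,\dots,a_n)$, and the determinant $m=\det(v_1,v_2)$ of the remaining (singular) cone becomes a function of that same sequence.

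The heart of the matter is the closing-up condition, and I expect it to be the main obstacle: the vectors produced by the recursion must wind counterclockwise exactly once, must keep every $v_i$ extremal in $\mathrm{conv}(v_1,\dots,v_n)$ (otherwise $-K_{X_Q}$ is not ample and $X_Q$ is not del Pezzo), and must leave precisely one wedge of determinant $\ge 2$. I would first settle $\rho=1$ directly: a triangle with one singular cone normalizes to $Q_p^{[1]}$, with $m=p+1$. For $\rho\ge 2$ the key observation is that the unique ray splitting a smooth cone into two smooth cones is the star subdivision $v_i+v_{i+1}$, i.e. an equivariant blow-up of a smooth fixed point; conversely any example with $\rho\ge 2$ carries a torus-invariant $(-1)$-curve away from the singular point (a smooth vertex with $a_i=1$) whose contraction lowers $\rho$ by one and preserves both the lone singularity and ampleness of $-K_{X_Q}$, so descent reduces everything to the triangles. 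The bound $n\le 5$ then follows from a collapse phenomenon: iterating the subdivision, one checks by the explicit recursion that a third insertion forces a previously present vertex into the interior of the segment joining its new neighbours — for instance inserting $\binom{p}{1}+\binom{p-1}{1}$ drives $\binom{p-1}{1}$ to the midpoint of $[\binom{-1}{0},\binom{p}{1}+\binom{p-1}{1}]$ — so extremality, hence the del Pezzo property, breaks. This simultaneously caps $n$ at $5$ and singles out the three sequences of \eqref{DEFQS}; the vertex- and edge-weights of $\mathfrak{G}_{\Delta_{Q_{p}^{[k]}}}$ in Figure \ref{Fig.2} are then just the numbers $-a_i$ together with the Hirzebruch--Jung continued fraction of the single singular cone, read off from the normal form.

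For (iii) I would recognize the fans explicitly: the relation $1\cdot\binom{1}{-1}+1\cdot\binom{p}{1}+(p+1)\binom{-1}{0}=0$ identifies $X_{Q_p^{[1]}}$ with $\mathbb{P}_{\mathbb{C}}^{2}(1,1,p+1)$, the contraction of the $\infty$-section $\mathbb{P}(\mathcal{O}_{\mathbb{P}_{\mathbb{C}}^{1}}(p+1))$ of $\mathbb{F}_{p+1}$, while $Q_p^{[2]}$ and $Q_p^{[3]}$ arise from $Q_p^{[1]}$ by the star subdivisions $\binom{p-1}{1}=\binom{p}{1}+\binom{-1}{0}$ and $\binom{0}{-1}=\binom{-1}{0}+\binom{1}{-1}$, which on surfaces are precisely the asserted equivariant blow-ups; comparing the rays of $Q_p^{[2]}$ with those of $\mathbb{F}_{p}$ yields the Hirzebruch-surface description. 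Finally, for (iv), the single singular cone is $\mathrm{GL}_2(\mathbb{Z})$-equivalent to the standard cone of $\tfrac{1}{p+1}(1,1)$ in all three families, whose Gorenstein index is $(p+1)/\gcd(p+1,2)$; since this is the only singular point, it equals the index $\ell$ of $X_Q$. Solving $(p+1)/\gcd(p+1,2)=\ell$ over $\mathbb{Z}_{>0}$ gives $p+1=\ell$ (possible only for odd $\ell\ge 3$) or $p+1=2\ell$ (always admissible), which is exactly the dichotomy $p\in\{\ell-1,\,2\ell-1\}$ for odd $\ell\ge 3$ and $p=2\ell-1$ for $\ell\in\{1\}\cup 2\mathbb{Z}$.
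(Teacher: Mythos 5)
Your strategy is genuinely different from the paper's: you settle triangles by direct normalization and then try to reduce every case with $\rho\geq 2$ to the triangle case by contracting a torus-invariant $(-1)$-curve away from the singularity, recovering the list \eqref{DEFQS} afterwards by enumerating star subdivisions (the paper instead never descends: it first proves, via an auxiliary vertex in the left half-plane, that the singular cone must be a $(p,p+1)$-cone --- Lemma \ref{BOTHBASIC} --- and then corrals all remaining vertices onto the two lines $y=\pm 1$ --- Lemma \ref{KEY-LEMMA} --- which caps $\nu\leq 5$ at once). Your route is attractive, notably because the arithmetic fact $q=p+1$ then comes for free from the triangle computation; but the descent step, which carries all the weight, is asserted rather than proved, and it is exactly where the ``closing-up'' difficulty that you yourself flag actually lives. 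Concretely, the claim that any example with $\rho\geq2$ has a smooth vertex $v_i$, $i\notin\{1,2\}$, with $a_i=1$ does \emph{not} follow from your recursion and sign bookkeeping. Take $v_2=\tbinom{1}{0}$, $v_3=\tbinom{0}{1}$ and $(a_3,a_4,a_5)=(-1,-5,-1)$: the recursion produces $v_4=\tbinom{-1}{-1}$, $v_5=\tbinom{5}{4}$, $v_1=\tbinom{-4}{-3}$, all primitive, all consecutive determinants equal to $1$, closing determinant $\det(v_1,v_2)=3\geq2$, and all $a_i\leq 0$ --- every local condition in your setup is satisfied, yet this is not a fan: the five rays wind around the origin \emph{twice}. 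Hence any proof of your $(-1)$-vertex claim must invoke the global winding-once condition (for instance, summing the inequalities $\theta_{i-1}+\theta_i\geq\pi$, valid whenever $a_i\leq0$, over $i=3,\dots,n$ forces $n\leq5$, and the cases $n=4,5$ then require separate integrality arguments of the kind you only carried out for triangles). Your proposal never does this, and this is the main gap.

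A second, lesser, omission: you assert that the contraction preserves ampleness of $-K$, but when the contracted vertex is adjacent to the singular cone this needs an argument --- and this case is unavoidable, since e.g.\ in $Q_p^{[2]}$ the unique contractible vertex $\tbinom{p-1}{1}$ neighbours $\tbinom{p}{1}$. (It is in fact true: writing $m=\det(v_1,v_2)$, the turn at $v_2$ after contracting $v_3$ equals $\det(v_2-v_1,v_4-v_2)=\det(v_2-v_1,v_3-v_2)+m>0$; away from the singular cone the neighbours' weights simply drop by $1$. But it has to be said.) The surrounding material checks out: the triangle normal form, the midpoint-collapse computation (e.g.\ $\tbinom{p-1}{1}$ becoming the midpoint of $\bigl[\tbinom{-1}{0},\tbinom{2p-1}{2}\bigr]$ after a further subdivision), the identification of $X_{Q_p^{[1]}}$ with $\mathbb{P}_{\mathbb{C}}^{2}(1,1,p+1)$ via $\tbinom{1}{-1}+\tbinom{p}{1}+(p+1)\tbinom{-1}{0}=\mathbf{0}$, and the index dichotomy in (iv) are all correct, the last two conditional on (i)--(ii). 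As it stands, though, the proof of (i)--(ii) is incomplete, and filling the descent gap requires essentially the same amount of lattice-geometric work that the paper spends on its direct argument.
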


\begin{figure}[ht]
\includegraphics[height=3.9cm, width=11cm]{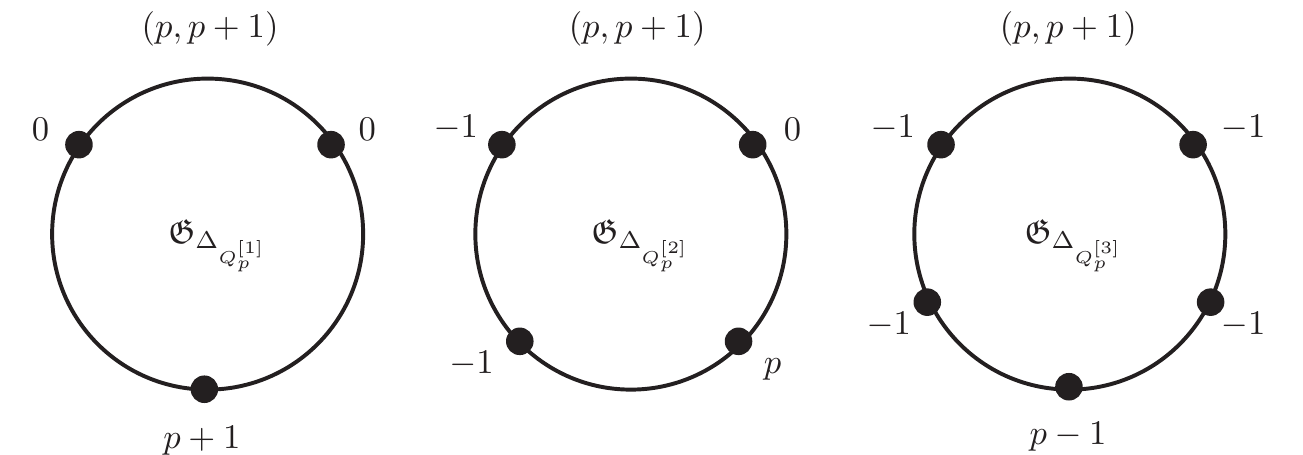}
\caption{}
\label{Fig.2}
\end{figure}

\noindent{}$\blacktriangleright$ \textit{Equations defining closed
embeddings.} For every del Pezzo surface $X$ of degree $d$ with $3\leq
d\leq9 $ the anticanonical divisor $-K_{X}$ is already very ample, and $\Phi
_{\left\vert -K_{X}\right\vert }$ gives rise to a realization of $X$ as a
subvariety of projective degree $d$ in $\mathbb{P}_{\mathbb{C}}^{d}.$ (For $%
d=1$ and $d=2,$ one has to work with $-3K_{X}$ and $-2K_{X}$ instead to
obtain realizations of $X$ as a subvariety of degree $9,$ and of degree $8$
in $\mathbb{P}_{\mathbb{C}}^{6},$ respectively.) Generalizations of these
(or similar but more \textquotedblleft economic\textquotedblright)
embeddings of \textit{log} del Pezzo surfaces of index $1$ and $2$ (in
appropriate projective or weighted projective spaces) appear in \cite{H-W}
and \cite{Ka-Ka}. Since every ample divisor on a compact \textit{toric}
surface is very ample (cf. \cite{E-W} or \cite[Corollary 2.2.19 (b), p. 71,
and Proposition 6.1.10, pp. 269-270 ]{CLS}), the map $\Phi_{\left\vert -\ell
K_{X_{Q}}\right\vert }$ associated to the linear system $\left\vert -\ell
K_{X_{Q}}\right\vert $ on a toric log del Pezzo surface $X_{Q}$ of index $%
\ell$ becomes a closed embedding. Koelman's Theorem \cite{Koelman} and
standard lattice point enumeration techniques enable us to describe $%
\Phi_{\left\vert -\ell K_{X_{Q}}\right\vert }(X_{Q})$ for those $X_{Q}$'s
classified in Theorem \ref{MAIN1} as follows: \renewcommand{%
\arraystretch}{1.9}

\begin{theorem}
\label{MAIN2}Let $X_{Q}$ be a toric log del Pezzo surface of index $%
\ell\geq1 $ with exactly one singularity. Then the image of $X_{Q}\cong
X_{Q_{p}^{\left[ k\right] }}$ under the closed embedding 
\begin{equation*}
\Phi_{\left\vert -\ell K_{X_{Q}}\right\vert }:X_{Q}\hookrightarrow \mathbb{P}%
(\left\vert -\ell K_{X_{Q}}\right\vert )
\end{equation*}
is isomorphic to a subvariety of $\mathbb{P}_{\mathbb{C}}^{\delta _{Q_{p}^{%
\left[ k\right] }}}$ of projective degree $d_{Q_{p}^{\left[ k\right] }}$
which can be expressed as intersection of finitely many quadrics; $%
\delta_{Q_{p}^{\left[ k\right] }}$ and $d_{Q_{p}^{\left[ k\right] }}$ are
given in the table\emph{:}{\small 
\begin{equation}
\begin{tabular}{|c|c|c|c|c|}
\hline
\emph{No.} & $p$ & $k$ & $%
\begin{array}{c}
d_{Q_{p}^{\left[ k\right]}} \vspace{0.1cm} \\ 
\end{array}%
$ & $\delta _{Q_{p}^{\left[ k\right] }}$ \\ \hline\hline
\emph{(i)} & \emph{odd} & $1$ & $\frac{1}{4}\left( p+1\right) \left(
p+3\right) ^{2}$ & $\frac{1}{8}\left( p+3\right) ^{3}$ \\ \hline
\emph{(ii)} & \emph{even} & $1$ & $\left( p+1\right) \left( p+3\right) ^{2}$
& $\frac{1}{2}\left( p+2\right) \left( p+3\right) ^{2}$ \\ \hline
\emph{(iii)} & \emph{odd} & $2$ & $\frac{1}{4}\allowbreak\left( p+1\right)
\left( p^{2}+5p+8\right) $ & $\frac{1}{8}\left( p+3\right) \left(
p^{2}+5p+8\right) $ \\ \hline
\emph{(iv)} & \emph{even} & $2$ & $\left( p+1\right) \left(
p^{2}+5p+8\right) $ & $\frac{1}{2}\left( p+2\right) \left( p^{2}+5p+8\right) 
$ \\ \hline
\emph{(v)} & \emph{odd} & $3$ & $\frac{1}{4}\left( p+1\right) \left(
p^{2}+4p+7\right) $ & $\frac{1}{8}\left( p+3\right) \left( p^{2}+4p+7\right) 
$ \\ \hline
\emph{(vi)} & \emph{even} & $3$ & $\left( p+1\right) \left(
p^{2}+4p+7\right) $ & $\frac{1}{2}\left( p+2\right) \left( p^{2}+4p+7\right) 
$ \\ \hline
\end{tabular}
\ \ \ \ \ \ \   \label{TABELLE1}
\end{equation}
} On the other hand, the cardinality $\beta_{Q_{p}^{\left[ k\right] }}$ of
any minimal system of quadrics (generating the ideal which determines this
subvariety) equals{\small 
\begin{equation}
\begin{tabular}{|c|c|c|c|}
\hline
\emph{No.} & $p$ & $k$ & $%
\begin{array}{c}
\beta_{Q_{p}^{\left[ k\right] }} \vspace{0.1cm} \\ 
\end{array}%
$ \\ \hline\hline
\emph{(i)} & \emph{odd} & $1$ & $\frac{1}{128}\left( p+1\right) \left(
p+3\right) ^{2}\left( p^{3}+11p^{2}+43p+25\right) \allowbreak$ \\ \hline
\emph{(ii)} & \emph{even} & $1$ & $\frac{1}{8}\left( p+3\right) ^{2}\left(
p^{4}+10p^{3}+37p^{2}+50p+24\right) $ \\ \hline
\emph{(iii)} & \emph{odd} & $2$ & $\frac{1}{128}\left( p+1\right) \left(
p^{2}+5p+8\right) \left( p^{3}+10p^{2}+37p+16\right) $ \\ \hline
\emph{(iv)} & \emph{even} & $2$ & $\frac{1}{8}\left( p^{2}+5p+8\right)
\left( p^{4}+9p^{3}+32p^{2}+42p+20\right) $ \\ \hline
\emph{(v)} & \emph{odd} & $3$ & $\frac{1}{128}\left( p+1\right) \left(
p^{2}+4p+7\right) \left( p^{3}+9p^{2}+31p+7\right) $ \\ \hline
\emph{(vi)} & \emph{even} & $3$ & $\frac{1}{8}\left( p^{2}+4p+7\right)
\left( p^{4}+8p^{3}+27p^{2}+34p+16\right) $ \\ \hline
\end{tabular}
\ \ \ \ \ \   \label{TABELLE2}
\end{equation}
} and the sectional genus $g_{Q_{p}^{\left[ k\right] }}\emph{\medskip}$ of $%
X_{Q_{p}^{\left[ k\right] }}$ equals{\small 
\begin{equation}
\begin{tabular}{|c|c|c|c|}
\hline
\emph{No.} & $p$ & $k$ & $%
\begin{array}{c}
g_{Q_{p}^{\left[ k\right] }} \vspace{0.1cm} \\ 
\end{array}%
$ \\ \hline\hline
\emph{(i)} & \emph{odd} & $1$ & $\frac{1}{8}\left( p+1\right) \left(
p^{2}+4p-1\right) $ \\ \hline
\emph{(ii)} & \emph{even} & $1$ & $\frac{1}{2}\left( p+2\right) \left(
p^{2}+4p-1\right) $ \\ \hline
\emph{(iii)} & \emph{odd} & $2$ & $\frac{1}{8}p\left( p+1\right) \left(
p+3\right) $ \\ \hline
\emph{(iv)} & \emph{even} & $2$ & $\frac{1}{2}\left(
p^{3}+5p^{2}+8p+2\right) $ \\ \hline
\emph{(v)} & \emph{odd} & $3$ & $\frac{1}{8}\left( p+1\right) ^{3}$ \\ \hline
\emph{(vi)} & \emph{even} & $3$ & $\frac{1}{2}\left(
p^{3}+4p^{2}+7p+2\right) $ \\ \hline
\end{tabular}
\ \ \ \ \   \label{TABELLE3}
\end{equation}
}
\end{theorem}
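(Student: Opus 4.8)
The plan is to convert every invariant in the statement into lattice-point data of the polytope
$P_{\ell}:=\{\,m\in M_{\mathbb{R}}:\langle m,v\rangle\geq-\ell\ \text{for every vertex }v\text{ of }Q_{p}^{[k]}\,\}=\ell\,Q^{\circ}$
attached to the Cartier divisor $-\ell K_{X_{Q}}$, and then to evaluate these by Ehrhart/Pick bookkeeping. The very first step is to pin down how the index $\ell$ depends on $p$. In each of the three fans spanning $Q_{p}^{[k]}$ exactly one two-dimensional cone is singular, the one generated by $\binom{1}{-1}$ and $\binom{p}{1}$, and its multiplicity equals $p+1$ (the determinant of this pair of primitive generators); solving $\langle m,\binom{1}{-1}\rangle=\langle m,\binom{p}{1}\rangle=-\ell$ over $M=\mathbb{Z}^{2}$ shows that $-\ell K_{X_{Q}}$ is Cartier precisely when $(p+1)\mid 2\ell$. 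Hence $\ell=(p+1)/2$ for odd $p$ and $\ell=p+1$ for even $p$, which is exactly why both tables split along the parity of $p$. Since $P_{\ell}$ is then an honest lattice polygon and every ample divisor on a complete toric surface is very ample, $\Phi_{\left\vert -\ell K_{X_{Q}}\right\vert }$ realizes $X_{Q_{p}^{[k]}}$ as a projectively normal toric surface whose homogeneous coordinate ring has Hilbert function $r\mapsto|rP_{\ell}\cap M|$.

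Next I would write down the vertices of $Q^{\circ}$ --- one per edge of $Q_{p}^{[k]}$, found by intersecting the two supporting lines $\langle m,v_{i}\rangle=-1$ --- and multiply by $\ell$ to obtain the integral vertices of $P_{\ell}$. From these, $d_{Q_{p}^{[k]}}=(-\ell K_{X_{Q}})^{2}$ is the normalized volume $2\,\mathrm{Area}(P_{\ell})$, while $\delta_{Q_{p}^{[k]}}=|P_{\ell}\cap M|-1$ follows from Pick's formula $|P_{\ell}\cap M|=\mathrm{Area}(P_{\ell})+\tfrac{1}{2}B(P_{\ell})+1$, the boundary count $B(P_{\ell})$ being the sum over edges of the gcd of the edge vector. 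Carrying this out in each case yields the entries of \eqref{TABELLE1}.

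For the sectional genus I would invoke adjunction with $L:=-\ell K_{X_{Q}}$: $g_{Q_{p}^{[k]}}=1+\tfrac{1}{2}L\cdot(L+K_{X_{Q}})=1+\tfrac{1}{2}\bigl(L^{2}-L\cdot(-K_{X_{Q}})\bigr)$. Using the standard toric identity $L\cdot(-K_{X_{Q}})=B(P_{\ell})$ together with Pick's formula collapses this to the number of interior lattice points $I(P_{\ell})$, and expanding in $p$ (again case by case) produces \eqref{TABELLE3}. Finally, for the defining equations I would appeal to Koelman's theorem: because $B(P_{\ell})$ grows quadratically in $p$ and is in particular $\geq 4$ throughout, the toric ideal of $\Phi_{\left\vert -\ell K_{X_{Q}}\right\vert }(X_{Q_{p}^{[k]}})$ is generated in degree $2$, so the image is an intersection of quadrics. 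Projective normality then identifies $\beta_{Q_{p}^{[k]}}$ with the dimension of the degree-$2$ part of the ideal, namely $\binom{\delta_{Q_{p}^{[k]}}+2}{2}-|2P_{\ell}\cap M|$; evaluating $|2P_{\ell}\cap M|$ from Pick applied to $2P_{\ell}$ (area $4\,\mathrm{Area}(P_{\ell})$, boundary $2B(P_{\ell})$) and expanding the difference gives \eqref{TABELLE2}.

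I expect the main obstacle to be organizational rather than conceptual: the argument fans out into six parallel computations ($k\in\{1,2,3\}$ times the two parities of $p$), and the $\beta$-entries in particular emerge only after a delicate cancellation between the large binomial $\binom{\delta_{Q_{p}^{[k]}}+2}{2}$ and the Ehrhart number $|2P_{\ell}\cap M|$. The genuinely load-bearing point is the verification that Koelman's hypotheses force degree-$2$ generation with no surviving cubic generators; granting that, the remaining labor is the careful evaluation of $B(P_{\ell})$ and of the two Ehrhart counts $|P_{\ell}\cap M|$ and $|2P_{\ell}\cap M|$, and the check that the resulting polynomials in $p$ match the three tables.
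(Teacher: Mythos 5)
Your proposal is correct and follows essentially the same route as the paper's own proof: both work with the dilated polar polygon $\ell\mathring{Q}_{p}^{\left[ k\right] }$, compute $d_{Q_{p}^{\left[ k\right] }}$ as twice its area and $\delta_{Q_{p}^{\left[ k\right] }}$ via Pick's formula, invoke Koelman's theorem (after checking the boundary-point hypothesis) to get generation in degree two, obtain $\beta_{Q_{p}^{\left[ k\right] }}$ from the dimension count $\tbinom{\delta+2}{2}-\sharp(2(\ell\mathring{Q})\cap\mathbb{Z}^{2})$ evaluated by Ehrhart/Pick --- which is exactly how the paper proves its Corollary \ref{CCCOR}, using normality of lattice polygons and non-degeneracy of the embedding --- and reduce the sectional genus to the count of interior lattice points. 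The only cosmetic deviations are that you re-derive the index from the Cartier condition at the unique singular cone and the genus via adjunction plus the identity $L\cdot(-K_{X})=B(P_{\ell})$, where the paper instead cites its formula (\ref{INDEXFORMEL}) and \cite[Proposition 10.5.8]{CLS}, respectively; both routes land on the same formulas.
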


\noindent{}The paper is organized as follows: In \S \ref{2DIMTORSING} we
focus on the two non-negative, relatively prime integers $p=p_{\sigma}$ and $%
q=q_{\sigma}$ parametrizing the $2$-dimensional, rational, strongly convex
polyhedral cones $\sigma,$ and explain how they characterize the $2$%
-dimensional toric singularities. In \S \ref{COMTORS}-\S \ref{LDPPOL} we
recall some auxiliary geometric properties of compact toric surfaces and of
those which are log del Pezzo. The proofs of Theorems \ref{MAIN1} and \ref%
{MAIN2} are given in sections \ref{PROOFOFMAIN1} and \ref{PROOFOFMAIN2},
respectively. We use tools only from discrete and classical toric geometry,
adopting the standard terminology from \cite{CLS}, \cite{Ewald}, \cite%
{Fulton}, and \cite{Oda} (and mostly the notation introduced in \cite{Dais1}%
).

\section{Two-dimensional toric singularities\label{2DIMTORSING}}

\noindent{}Let $\sigma=\mathbb{R}_{\geq0}\mathbf{n}+\mathbb{R}_{\geq 0}%
\mathbf{n}^{\prime}\subset\mathbb{R}^{2}$ be a 2-dimensional, rational,
strongly convex polyhedral cone. Without loss of generality we may assume
that $\mathbf{n}=\tbinom{a}{b}$, $\mathbf{n}^{\prime}=\tbinom{c}{d}\in 
\mathbb{Z}^{2},$ and that both $\mathbf{n}$ and $\mathbf{n}^{\prime}$ are
primitive elements of $\mathbb{Z}^{2}$, i.e., gcd$\left( a,b\right) =1$ and
gcd$\left( c,d\right) =1.$

\begin{lemma}
\label{pequ}Consider $\kappa,\lambda\in\mathbb{Z},$ such that $\kappa
a-\lambda b=1.$ If $q:=\left\vert ad-bc\right\vert ,$ and $p$ is the unique
integer with 
\begin{equation*}
0\leq p<q\text{ \ \ \ \emph{and} \ \ \ }\kappa c-\lambda d\text{ }\equiv
p\left( \text{\emph{mod} }q\right) ,
\end{equation*}
then\emph{\ gcd}$\left( p,q\right) =1$, and there exists a primitive element 
$\mathbf{n}^{\prime\prime}\in\mathbb{Z}^{2}$ such that\ 
\begin{equation*}
\mathbf{n}^{\prime}=p\mathbf{n}+q\mathbf{n}^{\prime\prime}\text{ \emph{and}
\ }\left\{ \mathbf{n},\mathbf{n}^{\prime\prime}\right\} \text{ \emph{is a} }%
\mathbb{Z}\text{\emph{-basis of} }\mathbb{Z}^{2}.
\end{equation*}
Moreover, there is a unimodular transformation $\Psi:\mathbb{R}%
^{2}\rightarrow\mathbb{R}^{2},$ $\Psi\left( \mathbf{x}\right) :=\Xi \,%
\mathbf{x,}$ with $\Xi\in$ \emph{GL}$_{2}(\mathbb{Z}),$ such that%
\begin{equation*}
\Psi\left( \sigma\right) =\mathbb{R}_{\geq0}\tbinom{1}{0}+\mathbb{R}_{\geq 0}%
\tbinom{p}{q}.
\end{equation*}
\end{lemma}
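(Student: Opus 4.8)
The plan is to construct the required unimodular transformation $\Psi$ explicitly out of the given data $\kappa,\lambda$, and then to obtain the two divisibility/basis assertions as formal consequences of the fact that $\Psi$ is a lattice automorphism. Since $\sigma$ is $2$-dimensional and strongly convex, $\mathbf{n}$ and $\mathbf{n}^{\prime}$ are linearly independent, so $ad-bc\neq0$; I set $\varepsilon:=\operatorname{sgn}(ad-bc)\in\{\pm1\}$, so that $q=\varepsilon(ad-bc)>0$, and consider the integer matrix
\[
\Xi_{0}:=\begin{pmatrix}\kappa & -\lambda\\ -\varepsilon b & \varepsilon a\end{pmatrix}.
\]
A direct computation using $\kappa a-\lambda b=1$ gives $\det\Xi_{0}=\varepsilon(\kappa a-\lambda b)=\varepsilon\in\{\pm1\}$, hence $\Xi_{0}\in\mathrm{GL}_{2}(\mathbb{Z})$, together with $\Xi_{0}\,\mathbf{n}=\tbinom{1}{0}$ and $\Xi_{0}\,\mathbf{n}^{\prime}=\tbinom{\kappa c-\lambda d}{\,q\,}$.

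The only coordinate still out of place is the first entry of $\Xi_{0}\,\mathbf{n}^{\prime}$: by the defining congruence I may write $\kappa c-\lambda d=p+mq$ with $m:=(\kappa c-\lambda d-p)/q\in\mathbb{Z}$, and the shear $T:=\left(\begin{smallmatrix}1 & -m\\0 & 1\end{smallmatrix}\right)\in\mathrm{SL}_{2}(\mathbb{Z})$ fixes $\tbinom{1}{0}$ and sends $\tbinom{p+mq}{q}$ to $\tbinom{p}{q}$. Thus $\Xi:=T\,\Xi_{0}\in\mathrm{GL}_{2}(\mathbb{Z})$, and the associated map $\Psi(\mathbf{x}):=\Xi\,\mathbf{x}$ satisfies $\Psi(\mathbf{n})=\tbinom{1}{0}$ and $\Psi(\mathbf{n}^{\prime})=\tbinom{p}{q}$. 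Because $\Psi$ is linear and bijective it carries $\mathbb{R}_{\geq0}$-combinations to $\mathbb{R}_{\geq0}$-combinations, so $\Psi(\sigma)=\mathbb{R}_{\geq0}\Psi(\mathbf{n})+\mathbb{R}_{\geq0}\Psi(\mathbf{n}^{\prime})=\mathbb{R}_{\geq0}\tbinom{1}{0}+\mathbb{R}_{\geq0}\tbinom{p}{q}$, which is the last assertion of the lemma.

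With $\Psi$ in hand the remaining claims are immediate. The vector $\mathbf{n}^{\prime}$ is primitive by hypothesis, and a unimodular map preserves primitivity; hence $\Psi(\mathbf{n}^{\prime})=\tbinom{p}{q}$ is primitive, i.e. $\gcd(p,q)=1$. For the existence of $\mathbf{n}^{\prime\prime}$, I would put $\mathbf{n}^{\prime\prime}:=\Xi^{-1}\tbinom{0}{1}\in\mathbb{Z}^{2}$; then $\Psi(p\mathbf{n}+q\mathbf{n}^{\prime\prime})=p\tbinom{1}{0}+q\tbinom{0}{1}=\tbinom{p}{q}=\Psi(\mathbf{n}^{\prime})$ forces $\mathbf{n}^{\prime}=p\mathbf{n}+q\mathbf{n}^{\prime\prime}$ by injectivity, while $\{\mathbf{n},\mathbf{n}^{\prime\prime}\}=\{\Xi^{-1}\tbinom{1}{0},\Xi^{-1}\tbinom{0}{1}\}$ is the image of the standard basis under $\Xi^{-1}\in\mathrm{GL}_{2}(\mathbb{Z})$ and is therefore a $\mathbb{Z}$-basis of $\mathbb{Z}^{2}$, with $\mathbf{n}^{\prime\prime}$ automatically primitive.

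I expect no genuine difficulty here; the content is essentially bookkeeping around a single clever choice of matrix. The one place demanding care is the construction of $\Xi$: one must remember both to insert the sign $\varepsilon$ so that the second coordinate of $\Xi_{0}\mathbf{n}^{\prime}$ comes out as $+q$ rather than merely $\pm q$, and to compose with the shear $T$ in order to reduce the first coordinate $\kappa c-\lambda d$ to its residue $p$ modulo $q$. It would be a mistake to think that $\Xi_{0}$ alone does the job, since it lands on $\tbinom{\kappa c-\lambda d}{q}$ and not on $\tbinom{p}{q}$; everything else follows formally from $\Psi$ being a lattice automorphism.
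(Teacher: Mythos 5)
Your proof is correct and complete. Every step checks out: $\det\Xi_{0}=\varepsilon(\kappa a-\lambda b)=\varepsilon$, so $\Xi_{0}\in\mathrm{GL}_{2}(\mathbb{Z})$; $\Xi_{0}\mathbf{n}=\tbinom{1}{0}$ and $\Xi_{0}\mathbf{n}^{\prime}=\tbinom{\kappa c-\lambda d}{q}$ are immediate computations (the sign $\varepsilon$ correctly forces the second coordinate to be $+q$); the shear $T$ reduces the first coordinate to the residue $p$; and the three assertions of the lemma (coprimality of $p,q$, the decomposition $\mathbf{n}^{\prime}=p\mathbf{n}+q\mathbf{n}^{\prime\prime}$ with $\{\mathbf{n},\mathbf{n}^{\prime\prime}\}$ a $\mathbb{Z}$-basis, and $\Psi(\sigma)=\mathbb{R}_{\geq0}\tbinom{1}{0}+\mathbb{R}_{\geq0}\tbinom{p}{q}$) all follow formally from $\Xi$ being a lattice automorphism, exactly as you argue.

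One point of comparison worth noting: the paper itself gives no argument for this lemma, only the citation to Lemmas 2.1 and 2.2 of the author's earlier paper in Results in Math. (reference [Dais2]), so your proposal fills in a black box rather than paralleling a visible proof. The cited source splits the statement in the opposite order from yours --- first producing the primitive vector $\mathbf{n}^{\prime\prime}$ and the basis decomposition $\mathbf{n}^{\prime}=p\mathbf{n}+q\mathbf{n}^{\prime\prime}$, and then defining the unimodular map as the change-of-basis transformation sending $\mathbf{n}\mapsto\tbinom{1}{0}$, $\mathbf{n}^{\prime\prime}\mapsto\tbinom{0}{1}$ --- whereas you build $\Psi$ first from the B\'{e}zout data and read off the basis statements afterwards. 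The two routes are essentially equivalent in content; yours has the small advantage of making all three conclusions corollaries of a single explicit matrix, at the cost of having to remember the normalizing shear $T$, which you handled correctly.
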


\begin{proof}
See \cite[Lemma 2.1 and Lemma 2.2]{Dais2}.
\end{proof}

\noindent{}Henceforth, we call $\sigma$ a $(p,q)$-\textit{cone}. By $%
U_{\sigma}:=$ Spec$(\mathbb{C}[\sigma^{\vee}\cap\mathbb{Z}^{2}])$ we denote
the affine toric variety associated to $\sigma$ (by means of the monoid $%
\sigma^{\vee}\cap\mathbb{Z}^{2}$, where $\sigma^{\vee}$ is the dual of $%
\sigma$) and by orb$(\sigma)$ the single point being fixed under the usual
action of the algebraic torus $\mathbb{T}:=$ Hom$_{\mathbb{Z}}(\mathbb{Z}%
^{2},\mathbb{C}^{\ast})$ on $U_{\sigma}.$

\begin{proposition}
\label{KEY-PROP}The following conditions are equivalent:\smallskip \ \newline
\emph{(i)} $\left\{ \mathbf{n},\mathbf{n}^{\prime}\right\} $ is a $\mathbb{Z}
$-basis of $\mathbb{Z}^{2}$.\smallskip\ \ \newline
\emph{(ii)} $q=1$ \emph{(}and consequently, $p=0$\emph{).\smallskip} \newline
\emph{(iii) conv}$(\{\mathbf{0},\mathbf{n},\mathbf{n}^{\prime}\})\cap\mathbb{%
Z}^{2}=\{\mathbf{0},\mathbf{n},\mathbf{n}^{\prime}\}.$ \emph{%
(\textquotedblleft conv\textquotedblright} \emph{is abbreviation for convex
hull.)\smallskip \newline
(iv)} $U_{\sigma}\cong\mathbb{C}^{2}.$
\end{proposition}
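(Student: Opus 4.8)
The plan is to prove the cyclic chain of equivalences (i)$\Rightarrow$(ii)$\Rightarrow$(iii)$\Rightarrow$(iv)$\Rightarrow$(i), using Lemma~\ref{pequ} as the bridge between the lattice-basis condition and the parameter $q$. First I would establish (i)$\Leftrightarrow$(ii). Recall from Lemma~\ref{pequ} that $q=\left\vert ad-bc\right\vert=\left\vert\det\bigl(\mathbf{n}\ \mathbf{n}^{\prime}\bigr)\right\vert$, the absolute value of the determinant of the $2\times 2$ integer matrix whose columns are $\mathbf{n}$ and $\mathbf{n}^{\prime}$. A pair of vectors in $\mathbb{Z}^{2}$ forms a $\mathbb{Z}$-basis precisely when this determinant is $\pm 1$, i.e. when $q=1$; and when $q=1$ the defining congruence $0\leq p<q$ forces $p=0$. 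This disposes of the equivalence (i)$\Leftrightarrow$(ii) at once.

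Next I would treat (ii)$\Leftrightarrow$(iv). Applying the unimodular transformation $\Psi$ of Lemma~\ref{pequ}, which is an isomorphism of $\mathbb{Z}^2$ and hence induces a toric isomorphism $U_\sigma\cong U_{\Psi(\sigma)}$, we may assume $\sigma=\mathbb{R}_{\geq0}\tbinom{1}{0}+\mathbb{R}_{\geq0}\tbinom{p}{q}$. When $q=1$ (so $p=0$) this is the standard first-quadrant cone $\mathbb{R}_{\geq0}\tbinom{1}{0}+\mathbb{R}_{\geq0}\tbinom{0}{1}$, whose dual is again the first quadrant, giving $\sigma^{\vee}\cap\mathbb{Z}^{2}=\mathbb{Z}_{\geq0}^{2}$ freely generated by the two characters $x,y$; hence $U_{\sigma}=\operatorname{Spec}\mathbb{C}[x,y]\cong\mathbb{C}^{2}$. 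For the converse, the standard smoothness criterion for affine toric varieties states that $U_\sigma$ is smooth if and only if $\sigma$ is generated by part of a $\mathbb{Z}$-basis of the lattice (cf. \cite[\S 2.1]{Fulton} or \cite[Theorem 1.10]{Oda}); since $U_{\sigma}\cong\mathbb{C}^{2}$ is smooth and $\sigma$ is full-dimensional, its two primitive generators $\mathbf{n},\mathbf{n}^{\prime}$ must themselves be a $\mathbb{Z}$-basis, which is (i), already shown equivalent to (ii). Alternatively one argues directly: if $U_{\sigma}\cong\mathbb{C}^{2}$ is smooth at the fixed point $\operatorname{orb}(\sigma)$, the monoid $\sigma^{\vee}\cap\mathbb{Z}^{2}$ must be free on two generators, forcing $\sigma$ to be unimodular and hence $q=1$.

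It remains to incorporate (iii). I would prove (ii)$\Rightarrow$(iii) and (iii)$\Rightarrow$(ii), again working after applying $\Psi$ so that $\sigma$ has rays through $\tbinom{1}{0}$ and $\tbinom{p}{q}$; note that $\Psi$, being unimodular, preserves both the lattice points and the convex hull in question, so the condition (iii) is invariant under $\Psi$. If $q=1,p=0$, the triangle $\operatorname{conv}(\{\mathbf{0},\tbinom{1}{0},\tbinom{0}{1}\})$ is the standard unit simplex, whose only lattice points are its three vertices, giving (iii). Conversely, I expect the cleanest route is via the normalized area: the triangle $\operatorname{conv}(\{\mathbf{0},\mathbf{n},\mathbf{n}^{\prime}\})$ has area $\tfrac{1}{2}\left\vert ad-bc\right\vert=\tfrac{q}{2}$, and by Pick's theorem a lattice triangle whose only lattice points are its three vertices (no interior, no further boundary points) has area exactly $\tfrac{1}{2}$, forcing $q=1$. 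This is the step I expect to require the most care, since Pick's theorem must be invoked correctly and one must confirm that condition (iii) really rules out \emph{both} interior and extra boundary lattice points; but with the determinant-area identity from Lemma~\ref{pequ} in hand, the argument closes the cycle and completes the proof of the equivalence of all four conditions.
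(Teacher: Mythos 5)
Your proof is correct, and it closes the chain of equivalences by a genuinely different route than the paper. The paper proves (i)$\Rightarrow$(ii) from the identity $q=|\det(\mathbf{n},\mathbf{n}^{\prime})|$, uses Pick's formula for (ii)$\Rightarrow$(iii), then completes the cycle by citing Gruber \& Lekkerkerker \cite[Theorem 4, p.~20]{G-L} for (iii)$\Rightarrow$(i), and cites \cite[Theorem 1.10]{Oda} for (i)$\Leftrightarrow$(iv). You instead make (ii) the hub: you get (i)$\Leftrightarrow$(ii) from the determinant criterion for $\mathbb{Z}$-bases, prove (ii)$\Rightarrow$(iii) by normalizing with the unimodular map $\Psi$ to the standard unit simplex, and -- crucially -- prove (iii)$\Rightarrow$(ii) by applying Pick's theorem in the \emph{opposite} direction from the paper: a lattice triangle with exactly three lattice points has $I=0$, $B=3$, hence area $\tfrac12$, so $q=2\,\mathrm{area}=1$. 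This swap of the direction in which Pick is invoked lets you dispense with the Gruber--Lekkerkerker citation entirely; the only external input you still need is the toric smoothness criterion for (iv), the same one the paper cites. What your route buys is a more self-contained and elementary argument (Pick plus the determinant criterion replace a geometry-of-numbers reference); what the paper's route buys is brevity, since the cyclic scheme (i)$\Rightarrow$(ii)$\Rightarrow$(iii)$\Rightarrow$(i) needs only three one-directional implications and delegates the hardest one to the literature. One small point of care, which you handled correctly: in (iii)$\Rightarrow$(ii) one must use \emph{both} the absence of interior points and the absence of extra boundary points to get $B=3$ in Pick's formula, and your parenthetical shows you saw this.
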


\begin{proof}
Let $T$ be the triangle conv$(\{\mathbf{0},\mathbf{n},\mathbf{n}%
^{\prime}\}). $ The implication (i)$\Rightarrow$(ii) is obvious because%
\begin{equation*}
q=\left\vert \det(\mathbf{n},\mathbf{n}^{\prime})\right\vert =2\text{ area}%
(T).
\end{equation*}
By Pick's formula (cf. \cite[p. 113]{Fulton}) we obtain%
\begin{equation*}
\frac{q}{2}=\text{area}(T)=\sharp(\text{int}(T)\cap\mathbb{Z}^{2})+\frac{1}{2%
}\sharp(\partial(T)\cap\mathbb{Z}^{2})-1,
\end{equation*}
where \textquotedblleft int\textquotedblright\ and $\partial$ are
abbreviations for interior and boundary, respectively. If $q=1,$ then%
\begin{equation*}
\sharp(\partial(T)\cap\mathbb{Z}^{2})\geq3\Rightarrow\sharp(\text{int}(T)\cap%
\mathbb{Z}^{2})=0\text{ and necessarily }\sharp(\partial(T)\cap \mathbb{Z}%
^{2})=3.
\end{equation*}
Hence, (ii)$\Rightarrow$(iii) is also true. (iii)$\Rightarrow$(i) follows
from \cite[Theorem 4, p. 20]{G-L}. For the proof of the equivalence of
conditions (i) and (iv) see \cite[Theorem 1.10, p. 15]{Oda}.
\end{proof}

\noindent{}If the conditions of Proposition \ref{KEY-PROP} are satisfied,
then $\sigma$ is said to be a \textit{basic cone}. On the other hand,
whenever $q>1$ we have the following:

\begin{proposition}
\label{Sing-Prop}$\emph{orb}(\sigma)\in U_{\sigma}$ is a cyclic quotient
singularity. In particular, 
\begin{equation*}
U_{\sigma}\cong\mathbb{C}^{2}/G=\emph{Spec}(\mathbb{C}[z_{1},z_{2}]^{G}),
\end{equation*}
with $G\subset$ \emph{GL}$\left( 2,\mathbb{C}\right) $ denoting the cyclic
group $G$ of order $q$ which is generated by \emph{diag}$(\zeta_{q}^{-p},%
\zeta_{q})$ \emph{(}$\zeta_{q}:=$ \emph{exp}$(2\pi\sqrt{-1}/q)$\emph{)} and
acts on $\mathbb{C}^{2}=$ \emph{Spec}$(\mathbb{C}[z_{1},z_{2}])$ linearly
and effectively.
\end{proposition}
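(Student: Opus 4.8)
The plan is to reduce to the standard $(p,q)$-cone and then exhibit $U_\sigma$ as a quotient of $\mathbb{C}^2$ by the finite abelian group $N/N'$, where $N'$ is the sublattice spanned by the two ray generators. First, by Lemma \ref{pequ} there is a $\Psi\in\mathrm{GL}_2(\mathbb{Z})$ with $\Psi(\sigma)=\mathbb{R}_{\geq0}\tbinom{1}{0}+\mathbb{R}_{\geq0}\tbinom{p}{q}$; since a lattice automorphism induces an isomorphism of the associated affine toric varieties carrying orbits to orbits, I may assume from the outset that $\sigma$ is this standard cone, with ray generators $\mathbf{u}_1=\tbinom{1}{0}$ and $\mathbf{u}_2=\tbinom{p}{q}$.

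Next I introduce the sublattice $N':=\mathbb{Z}\mathbf{u}_1+\mathbb{Z}\mathbf{u}_2\subseteq N=\mathbb{Z}^2$. Since $\{\mathbf{u}_1,\mathbf{u}_2\}$ is a $\mathbb{Z}$-basis of $N'$, the cone $\sigma$ is \emph{basic} with respect to $N'$, so Proposition \ref{KEY-PROP} gives $U_{\sigma,N'}\cong\mathbb{C}^2=\mathrm{Spec}(\mathbb{C}[z_1,z_2])$, where $z_1=\chi^{\mathbf{u}_1^\ast}$, $z_2=\chi^{\mathbf{u}_2^\ast}$ and $\{\mathbf{u}_1^\ast,\mathbf{u}_2^\ast\}$ is the basis of $M'=\mathrm{Hom}(N',\mathbb{Z})$ dual to $\{\mathbf{u}_1,\mathbf{u}_2\}$. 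Because $[N:N']=\left\vert\det(\mathbf{u}_1,\mathbf{u}_2)\right\vert=q$, the quotient $N/N'$ is cyclic of order $q$, and the standard correspondence between finite-index sublattices and quotient presentations of toric varieties (cf. \cite[pp. 34--35]{Fulton}) yields $U_\sigma=U_{\sigma,N}\cong U_{\sigma,N'}/G$, where $G:=\mathrm{Hom}(M'/M,\mathbb{C}^\ast)$ (cyclic of order $q$, with $M:=\{m\in M':m(N)\subseteq\mathbb{Z}\}$) acts on $\mathbb{C}[\sigma^\vee\cap M']$ through the $(M'/M)$-grading of its character basis, the invariants being exactly $\mathbb{C}[\sigma^\vee\cap M]$.

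The remaining, genuinely computational step is to make this $G$-action explicit and match it with $\mathrm{diag}(\zeta_q^{-p},\zeta_q)$. Expressing the standard basis of $N$ in terms of $\{\mathbf{u}_1,\mathbf{u}_2\}$ (so that $\tbinom{0}{1}=\tfrac{1}{q}(\mathbf{u}_2-p\mathbf{u}_1)$), one checks that $a\mathbf{u}_1^\ast+b\mathbf{u}_2^\ast$ lies in $M$ precisely when $b\equiv pa\ (\mathrm{mod}\ q)$; hence $M'/M\cong\mathbb{Z}/q\mathbb{Z}$ is generated by the class of $\mathbf{u}_2^\ast$, and $\mathbf{u}_1^\ast\equiv -p\,\mathbf{u}_2^\ast\ (\mathrm{mod}\ M)$. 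A generator $g$ of $G$ with $g(\overline{\mathbf{u}_2^\ast})=\zeta_q$ therefore scales $z_2$ by $\zeta_q$ and $z_1$ by $\zeta_q^{-p}$, i.e. acts as $\mathrm{diag}(\zeta_q^{-p},\zeta_q)$. Finally $\gcd(p,q)=1$ forces this matrix to have order exactly $q$, so $G\to\mathrm{GL}(2,\mathbb{C})$ is injective and the action is effective; $\mathrm{orb}(\sigma)$ is the image of the fixed point $\mathbf{0}$, and $U_\sigma\cong\mathbb{C}^2/G$ as claimed.

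I expect the main obstacle to be the bookkeeping in the dual lattice — correctly identifying $M'/M$ with $\mathbb{Z}/q\mathbb{Z}$ and pinning down the residues of $\mathbf{u}_1^\ast$ and $\mathbf{u}_2^\ast$ — since the direction and sign conventions of the quotient action are easy to misplace and they are exactly what determines that the first coordinate is scaled by $\zeta_q^{-p}$ rather than $\zeta_q^{p}$.
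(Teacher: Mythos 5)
Your proposal is correct, and its relationship to the paper is somewhat special: the paper does not prove Proposition \ref{Sing-Prop} at all, but simply cites \cite[Proposition 10.1.2]{CLS}, \cite[\S 2.2]{Fulton} and \cite[Proposition 1.24]{Oda}. What you have written is essentially a self-contained reconstruction of the argument contained in those references: reduce to the standard $(p,q)$-cone via Lemma \ref{pequ}, pass to the index-$q$ sublattice $N'=\mathbb{Z}\mathbf{u}_1+\mathbb{Z}\mathbf{u}_2$ (with respect to which $\sigma$ is basic, by Proposition \ref{KEY-PROP}), invoke the sublattice--quotient correspondence $U_{\sigma,N}\cong U_{\sigma,N'}/G$ with $G\cong\mathrm{Hom}(M'/M,\mathbb{C}^{\ast})$, and then do the dual-lattice bookkeeping. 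Your computations are right: $a\mathbf{u}_1^{\ast}+b\mathbf{u}_2^{\ast}\in M$ iff $b\equiv pa\ (\mathrm{mod}\ q)$, the class of $\mathbf{u}_2^{\ast}$ generates $M'/M\cong\mathbb{Z}/q\mathbb{Z}$, and $\mathbf{u}_1^{\ast}\equiv -p\,\mathbf{u}_2^{\ast}$, which produces exactly the action $\mathrm{diag}(\zeta_q^{-p},\zeta_q)$ with invariant ring $\mathbb{C}[\sigma^{\vee}\cap M]$.

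One small imprecision: you write that $\gcd(p,q)=1$ \emph{forces} the matrix $\mathrm{diag}(\zeta_q^{-p},\zeta_q)$ to have order exactly $q$, but this holds regardless of $p$, since the second diagonal entry alone has order $q$; effectiveness therefore does not use coprimality. Where $\gcd(p,q)=1$ genuinely enters is in guaranteeing that $G$ is a \emph{small} subgroup of $\mathrm{GL}(2,\mathbb{C})$ (no nontrivial element of $G$ is a quasi-reflection: if $g^k$ had eigenvalue $1$ in the first slot, then $q\mid pk$, hence $q\mid k$ and $g^k=\mathrm{id}$), which is what ensures that for $q>1$ the image of the origin, namely $\mathrm{orb}(\sigma)$, is an honest cyclic quotient \emph{singularity} rather than a smooth point, and that the pair $(p,q)$ is the standard invariant of its analytic type. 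Adding that one sentence would make your proof complete on all counts.
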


\begin{proof}
\noindent See \cite[Proposition 10.1.2, pp. 460-461]{CLS}, \cite[\S\ 2.2,
pp. 32-34]{Fulton} and \cite[Proposition 1.24, p.30]{Oda}.
\end{proof}

\noindent{}By Proposition \ref{ISO} these two numbers $p=p_{\sigma}$ and $\
q=q_{\sigma}$ parametrize uniquely the isomorphism class of the germ\emph{\ }%
$(U_{\sigma}$\emph{, }orb$\left( \sigma\right) )$, up to replacement of\emph{%
\ }$p$ by its socius $\widehat{p}$ (which corresponds just to the
interchange of the coordinates). [The \textit{socius} $\widehat{p}$ of $p$
is defined to be the uniquely determined integer, so that $0\leq \widehat{p}%
<q\emph{,}$ gcd$(\widehat{p},q)=1,$ and $p\,\widehat{p}\equiv1$(mod $q$).]

\begin{proposition}
\label{ISO}Let $\sigma,\tau\subset\mathbb{R}^{2}$ be two $2$-dimensional,
rational, stronly convex polyhedral cones. Then the following conditions are
equivalent\emph{:} \smallskip\newline
\emph{(i) \ }There is a $\mathbb{T}$-equivariant isomorphism $%
U_{\sigma}\cong U_{\tau}$ mapping \emph{orb}$\left( \sigma\right) $ onto 
\emph{orb}$\left( \tau\right) $.\smallskip\newline
\emph{(ii)} There is a unimodular transformation $\Psi:\mathbb{R}%
^{2}\rightarrow\mathbb{R}^{2},$ $\Psi\left( \mathbf{x}\right) :=\Xi\,\mathbf{%
x,}$ $\Xi\in$ \emph{GL}$_{2}(\mathbb{Z}),$ such that\emph{\ }$\Psi\left(
\sigma\right) =\tau.\smallskip$\newline
\emph{(iii)} For the numbers $p_{\sigma},$ $p_{\tau},$ $q_{\sigma},$ $%
q_{\tau}$ associated to $\sigma,\tau$ \emph{(}by \emph{Lemma} \emph{\ref%
{pequ})} we have $q_{\tau }=q_{\sigma}$ and either $p_{\tau}=p_{\sigma}$ or $%
p_{\tau}=\widehat {p}_{\sigma}.$
\end{proposition}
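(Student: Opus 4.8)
The plan is to prove the cycle of implications through (ii)$\Rightarrow$(i)$\Rightarrow$(ii) using the standard toric dictionary, and then to establish the equivalence (ii)$\Leftrightarrow$(iii) by an explicit computation after normalizing both cones by means of Lemma \ref{pequ}. The conceptual content lies in (i)$\Leftrightarrow$(ii), while the arithmetic heart of the statement --- the place where the socius $\widehat{p}_{\sigma}$ enters --- is (ii)$\Leftrightarrow$(iii).

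For (ii)$\Rightarrow$(i) I would recall that $\Xi\in\mathrm{GL}_{2}(\mathbb{Z})$ is a lattice automorphism of $N=\mathbb{Z}^{2}$, whose inverse-transpose is the induced automorphism of the dual lattice $M$. If $\Psi(\sigma)=\tau$, then this inverse-transpose carries $\sigma^{\vee}$ onto $\tau^{\vee}$ and hence restricts to a monoid isomorphism of $\sigma^{\vee}\cap M$ onto $\tau^{\vee}\cap M$; passing to semigroup algebras gives a $\mathbb{C}$-algebra isomorphism $\mathbb{C}[\sigma^{\vee}\cap M]\cong\mathbb{C}[\tau^{\vee}\cap M]$ which is homogeneous for the $M$-grading, i.e. $\mathbb{T}$-equivariant, and which sends the maximal monomial ideal cutting out $\mathrm{orb}(\sigma)$ to the one cutting out $\mathrm{orb}(\tau)$. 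Taking $\mathrm{Spec}$ yields the desired equivariant isomorphism. For the converse (i)$\Rightarrow$(ii), the restriction of the equivariant isomorphism to the open dense torus is an automorphism of $\mathbb{T}$, hence is given by a lattice automorphism $\Xi\in\mathrm{GL}_{2}(\mathbb{Z})$; since $\sigma$ (resp. $\tau$) is recovered from $U_{\sigma}$ (resp. $U_{\tau}$) as the full-dimensional cone of the defining fan, equivariance forces $\Psi(\sigma)=\tau$. This is exactly the equivalence of categories between cones-with-lattice-maps and affine toric varieties-with-equivariant-morphisms, for which I would cite \cite{Oda} / \cite{CLS}; full-dimensionality and strong convexity guarantee that $\mathrm{orb}(\sigma)$ is the unique $\mathbb{T}$-fixed point, so the point-matching hypothesis is automatic.

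For (ii)$\Leftrightarrow$(iii) I would first apply Lemma \ref{pequ} to replace $\sigma$ and $\tau$ by their normal forms $C(p_{\sigma},q_{\sigma}):=\mathbb{R}_{\geq0}\tbinom{1}{0}+\mathbb{R}_{\geq0}\tbinom{p_{\sigma}}{q_{\sigma}}$ and $C(p_{\tau},q_{\tau})$; by transitivity of unimodular equivalence, (ii) amounts to the existence of $\Xi\in\mathrm{GL}_{2}(\mathbb{Z})$ with $\Xi\,C(p_{\sigma},q_{\sigma})=C(p_{\tau},q_{\tau})$. Because $q=\lvert\det(\tbinom{1}{0},\tbinom{p}{q})\rvert$ is the lattice index of the cone, it is a unimodular invariant, which yields $q_{\tau}=q_{\sigma}$ at once. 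Writing $q:=q_{\sigma}=q_{\tau}$, such a $\Xi$ must carry the two primitive ray generators of the source cone onto those of the target in one of two ways. If $\Xi$ preserves the ordering of the rays, a direct $2\times2$ computation forces $\Xi=\bigl(\begin{smallmatrix}1 & b\\ 0 & 1\end{smallmatrix}\bigr)$ with $\det\Xi=+1$ and $p_{\tau}=p_{\sigma}+bq\equiv p_{\sigma}\,(\mathrm{mod}\ q)$, whence $p_{\tau}=p_{\sigma}$. If $\Xi$ reverses the ordering, then $\Xi\tbinom{1}{0}=\tbinom{p_{\tau}}{q}$ and $\Xi\tbinom{p_{\sigma}}{q}=\tbinom{1}{0}$, which gives $\det\Xi=-1$ and $p_{\sigma}p_{\tau}\equiv1\,(\mathrm{mod}\ q)$, hence $p_{\tau}=\widehat{p}_{\sigma}$ by the very definition of the socius. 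Conversely, the identity realizes $p_{\tau}=p_{\sigma}$, while the coordinate-swap reflection $\tbinom{x}{y}\mapsto\tbinom{y}{x}$ followed by the renormalization of Lemma \ref{pequ} realizes $p_{\tau}=\widehat{p}_{\sigma}$.

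I expect the main obstacle to be the bookkeeping in the order-reversing case: one must verify that no unimodular map sends a normal-form cone to another normal-form cone except by permuting the primitive ray generators, and then extract the precise congruence $p_{\sigma}p_{\tau}\equiv1\,(\mathrm{mod}\ q)$ --- rather than a coarser relation such as $\equiv\pm1$ --- so that the socius, and not some other residue, appears. The orientation sign $\det\Xi$ must be tracked carefully, since it cleanly separates the order-preserving case ($\det=+1$, giving $p_{\tau}=p_{\sigma}$) from the order-reversing case ($\det=-1$, giving $p_{\tau}=\widehat{p}_{\sigma}$); confirming that both signs actually occur is what makes the dichotomy in (iii) sharp rather than redundant.
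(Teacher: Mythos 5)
Your proof is correct and complete. Bear in mind that the paper does not actually prove Proposition \ref{ISO} internally: it defers to \cite[Proposition 2.4]{Dais2}, so there is no in-text argument to compare against --- but what you have written is precisely the standard argument that citation stands for: the toric dictionary (cones with lattice isomorphisms correspond to affine toric varieties with equivariant isomorphisms) settles (i)$\Leftrightarrow$(ii), with the orbit-matching hypothesis automatic because full-dimensionality makes orb$(\sigma)$ the unique $\mathbb{T}$-fixed point, as you note; and the reduction to normal forms plus the two-case $2\times2$ computation settles (ii)$\Leftrightarrow$(iii). Your case analysis is airtight: the order-preserving case forces $\Xi=\left(\begin{smallmatrix}1&b\\0&1\end{smallmatrix}\right)$, so $\det\Xi=1$ and $p_{\tau}\equiv p_{\sigma}$ (mod $q_{\sigma}$), hence $p_{\tau}=p_{\sigma}$; the order-reversing case forces $\Xi=\left(\begin{smallmatrix}p_{\tau}&a\\q_{\sigma}&-p_{\sigma}\end{smallmatrix}\right)$ with $p_{\sigma}p_{\tau}+aq_{\sigma}=1$, hence $\det\Xi=-1$ and $p_{\tau}=\widehat{p}_{\sigma}$, which is exactly the sharp congruence you were worried about. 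One presentational quibble: in the converse direction, \textquotedblleft coordinate swap followed by the renormalization of Lemma \ref{pequ}\textquotedblright\ is ambiguous, since the output of Lemma \ref{pequ} depends on which of the two rays of the swapped cone is taken first (one ordering returns $p_{\sigma}$, the other $\widehat{p}_{\sigma}$). You do not need this detour: your order-reversing computation, run backwards, already exhibits the required matrix --- since $p_{\sigma}\widehat{p}_{\sigma}\equiv1$ (mod $q_{\sigma}$), taking $a=(1-p_{\sigma}\widehat{p}_{\sigma})/q_{\sigma}\in\mathbb{Z}$ gives a unimodular $\Xi$ with $\det\Xi=-1$ carrying $\mathbb{R}_{\geq0}\tbinom{1}{0}+\mathbb{R}_{\geq0}\tbinom{p_{\sigma}}{q_{\sigma}}$ onto $\mathbb{R}_{\geq0}\tbinom{1}{0}+\mathbb{R}_{\geq0}\tbinom{\widehat{p}_{\sigma}}{q_{\sigma}}$.
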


\begin{proof}
See \cite[Proposition 2.4]{Dais2}.
\end{proof}

\section{Compact toric surfaces\label{COMTORS}}

\noindent {}Every compact toric surface is a $2$-dimensional toric variety $%
X_{\Delta }$ associated to a \textit{complete} fan $\Delta $ in $\mathbb{R}%
^{2},$ i.e., a fan having $2$-dimensional cones as maximal cones and whose
support $\left\vert \Delta \right\vert $ is the entire $\mathbb{R}^{2}$ (see 
\cite[Theorem 1.11, p. 16]{Oda}). Consider a complete fan $\Delta $ in $%
\mathbb{R}^{2}$ and suppose that 
\begin{equation}
\sigma _{i}=\mathbb{R}_{\geq 0}\mathbf{n}_{i}+\mathbb{R}_{\geq 0}\mathbf{n}%
_{i+1},\ \ \ i\in \{1,\ldots ,\nu \},  \label{MANYCONES}
\end{equation}%
are its $2$-dimensional cones (with $\nu \geq 3$ and $\mathbf{n}_{i}\in 
\mathbb{Z}^{2}$ primitive for all $i\in \{1,\ldots ,\nu \}$), enumerated in
such a way that $\mathbf{n}_{1},\ldots ,\mathbf{n}_{\nu }$ go \textit{%
anticlockwise} around the origin exactly once in this order (under the usual
convention: $\mathbf{n}_{\nu +1}:=\mathbf{n}_{1},$ $\mathbf{n}_{0}:=\mathbf{n%
}_{\nu }$). $X_{\Delta }$ is obtained by gluing the affine charts $U_{\sigma
_{i}}$ along the open subsets which are defined by the rays $\sigma _{i}\cap
\sigma _{i+1},$ for all $i\in \{1,\ldots ,\nu \}$ (cf. \cite[Theorem 1.4, p.
7]{Oda}). Since $\Delta $ is simplicial, the Picard number $\rho (X_{\Delta
})$ of $X_{\Delta }$ (i.e., the rank of its Picard group Pic$(X_{\Delta })$)
equals%
\begin{equation}
\rho (X_{\Delta })=\text{ }\nu -2,  \label{Picardnr}
\end{equation}%
(see \cite[p. 65]{Fulton}). Now suppose that $\sigma _{i}$ is a $%
(p_{i},q_{i})$-cone for all $i\in \{1,\ldots ,\nu \}$ and introduce the
notation%
\begin{equation}
I_{\Delta }:=\left\{ \left. i\in \{1,\ldots ,\nu \}\ \right\vert \
q_{i}>1\right\} ,\ \ J_{\Delta }:=\left\{ \left. i\in \{1,\ldots ,\nu \}\
\right\vert \ q_{i}=1\right\} ,  \label{IJNOT}
\end{equation}%
to separate the indices corresponding to non-basic from those corresponding
to basic cones. By Propositions \ref{KEY-PROP} and \ref{Sing-Prop} the
singular locus of $X_{\Delta }$ equals%
\begin{equation*}
\text{Sing}(X_{\Delta })=\left\{ \left. \text{orb}(\sigma _{i})\ \right\vert
\ i\in I_{\Delta }\right\} .
\end{equation*}%
For all $i\in I_{\Delta }$ consider the negative-regular continued fraction
expansion of 
\begin{equation*}
\frac{q_{i}}{q_{i}-p_{i}}=b_{1}^{(i)}-\frac{1}{b_{2}^{(i)}-\dfrac{%
\begin{array}{c}
1%
\end{array}%
}{%
\begin{array}{cc}
\ddots  &  \\ 
& b_{s_{i}-1}^{(i)}-\dfrac{1}{b_{s_{i}}^{(i)}}%
\end{array}%
}}\ \ ,
\end{equation*}%
and\ define $\mathbf{u}_{1}^{(i)}:=\mathbf{n}_{i},\ \mathbf{u}_{1}^{(i)}:=%
\frac{1}{q_{i}}((q_{i}-p_{i})\mathbf{n}_{i}+\mathbf{n}_{i+1}),$ and 
\begin{equation*}
\mathbf{u}_{j+1}^{(i)}=b_{j}^{(i)}\mathbf{u}_{j}^{(i)}-\mathbf{u}%
_{j-1}^{(i)},\ \ \forall j\in \{1,\ldots ,s_{i}\}.
\end{equation*}%
It is easy to see that $\mathbf{u}_{s_{i}+1}^{(i)}=\mathbf{n}_{i+1},$ and
that $b_{j}^{(i)}$ are integers $\geq 2,$ for all indices $j\in \{1,\ldots
,s_{i}\}.$ According to \cite[Proposition 4.9, p. 99]{Dais1}, the
self-intersection number of the canonical divisor $K_{X_{\Delta }}$ of $%
X_{\Delta }$ equals 
\begin{equation}
K_{X_{\Delta }}^{2}=12-\nu +\sum_{i\in I_{\Delta }}\left( \tfrac{%
q_{i}-p_{i}+1}{q_{i}}+\tfrac{q_{i}-\widehat{p}_{i}+1}{q_{i}}%
-2+\sum_{j=1}^{s_{i}}\left( b_{j}^{(i)}-3\right) \right) .
\label{SELFINTFORMULA}
\end{equation}%
By construction, the birational morphism $f:X_{\widetilde{\Delta }%
}\longrightarrow X_{\Delta }$ induced by the refinement%
\begin{equation*}
\widetilde{\Delta }:=\left\{ 
\begin{array}{c}
\text{ the cones }\left\{ \left. \sigma _{i}\ \right\vert \ i\in J_{\Delta
}\right\} \text{ and } \\ 
\left\{ \left. \mathbb{R}_{\geq 0}\,\mathbf{u}_{j}^{(i)}+\mathbb{R}_{\geq
0}\,\mathbf{u}_{j+1}^{(i)}\ \right\vert \ i\in I_{\Delta },\ j\in
\{0,1,\ldots ,s_{i}\}\right\} , \\ 
\text{together with their faces}%
\end{array}%
\right\} .
\end{equation*}%
of the fan $\Delta $ is the\textit{\ minimal desingularization} of $%
X_{\Delta }.$ The \textit{exceptional divisor} 
\begin{equation*}
E^{(i)}:=\sum_{j=1}^{s_{i}}E_{j}^{(i)},\ i\in I_{\Delta },
\end{equation*}%
replacing orb$(\sigma _{i})$ via $f$ has 
\begin{equation*}
E_{j}^{(i)}:=\text{ }\overline{\text{orb}_{\widetilde{\Delta }}(\mathbb{R}%
_{\geq 0}\,\mathbf{u}_{j}^{(i)})}\ (\cong \mathbb{P}_{\mathbb{C}}^{1}),\text{
\ }\forall j\in \{1,2,\ldots ,s_{i}\},
\end{equation*}%
(i.e., the closures of the orbits of the new\ rays w.r.t. $\widetilde{\Delta 
}$) as its components, and self-intersection number $%
(E_{j}^{(i)})^{2}=-b_{j}^{(i)}.$ Moreover, $\overline{C}_{i}:=\overline{%
\text{orb}_{\widetilde{\Delta }}(\mathbb{R}_{\geq 0}\,\mathbf{n}_{i})}$ is
the \textit{strict transform }of $C_{i}:=\overline{\text{orb}_{\Delta }(%
\mathbb{R}_{\geq 0}\,\mathbf{n}_{i})}$ w.r.t. $f$ for all $i\in \{1,2,\ldots
,\nu \}.$

\begin{definition}
\label{INTROOFRis}For every $i\in\{1,\ldots,\nu\}$ we introduce integers $%
r_{i}$ \textit{uniquely determined} by the conditions:%
\begin{equation}
r_{i}\mathbf{n}_{i}=\left\{ 
\begin{array}{ll}
\mathbf{u}_{s_{i-1}}^{(i-1)}+\mathbf{u}_{1}^{(i)}, & \text{if }i\in
I_{\Delta }^{\prime}, \\ 
\mathbf{n}_{i-1}+\mathbf{u}_{1}^{(i)}, & \text{if\emph{\ }}i\in I_{\Delta
}^{\prime\prime}, \\ 
\mathbf{u}_{s_{i-1}}^{(i-1)}+\mathbf{n}_{i+1}, & \text{if }i\in J_{\Delta
}^{\prime}, \\ 
\mathbf{n}_{i-1}+\mathbf{n}_{i+1}, & \text{if }i\in
J_{\Delta}^{\prime\prime},%
\end{array}
\right.  \label{CONDri}
\end{equation}
where%
\begin{equation*}
I_{\Delta}^{\prime}:=\left\{ \left. i\in I_{\Delta}\ \right\vert \
q_{i-1}>1\right\} ,\ \ I_{\Delta}^{\prime\prime}:=\left\{ \left. i\in
I_{\Delta}\ \right\vert \ q_{i-1}=1\right\} ,
\end{equation*}
and%
\begin{equation*}
J_{\Delta}^{\prime}:=\left\{ \left. i\in J_{\Delta}\ \right\vert \
q_{i-1}>1\right\} ,\ \ J_{\Delta}^{\prime\prime}:=\left\{ \left. i\in
J_{\Delta}\ \right\vert \ q_{i-1}=1\right\} ,
\end{equation*}
with $I_{\Delta},J_{\Delta}$ as in (\ref{IJNOT}).
\end{definition}

\noindent{}By \cite[Lemma 4.3]{Dais1}, for $i\in\{1,\ldots,\nu\},$ $-r_{i}$
is nothing but the self-intersection number $\overline{C}_{i}^{2}$ of 
\textit{\ }$\overline{C}_{i}.$ The triples $(p_{i},q_{i},r_{i}),$ $%
i\in\{1,2,\ldots,\nu\},$ are used to define the \textsc{wve}$^{2}$\textsc{c}%
-graph\textit{\ }$\mathfrak{G}_{\Delta}.$

\begin{definition}
A \textit{circular graph }is a plane graph whose vertices are points on a
circle and whose edges are the corresponding arcs (on this circle, each of
which connects two consecutive vertices). We say that a circular graph $%
\mathfrak{G}$ is $\mathbb{Z}$-\textit{weighted at its vertices} and \textit{%
double} $\mathbb{Z}$-\textit{weighted} \textit{at its edges} (and call it 
\textsc{wve}$^{2}$\textsc{c}-\textit{graph}, for short) if it is accompanied
by two maps 
\begin{equation*}
\left\{ \text{Vertices of }\mathfrak{G}\right\} \longmapsto\mathbb{Z},\
\left\{ \text{Edges of }\mathfrak{G}\right\} \longmapsto\mathbb{Z}^{2},
\end{equation*}
assigning to each vertex an integer and to each edge a pair of integers,
respectively. To every complete fan $\Delta$ in $\mathbb{R}^{2}$ (as
described above) we associate an anticlockwise directed \textsc{wve}$^{2}$%
\textsc{c}-graph $\mathfrak{G}_{\Delta}$ with 
\begin{equation*}
\left\{ \text{Vertices of }\mathfrak{G}_{\Delta}\right\} =\{\mathbf{v}%
_{1},\ldots,\mathbf{v}_{\nu}\}\text{\ }\ \text{and\ \ }\left\{ \text{Edges
of }\mathfrak{G}_{\Delta}\right\} =\{\overline{\mathbf{v}_{1}\mathbf{v}_{2}}%
,\ldots,\overline{\mathbf{v}_{\nu}\mathbf{v}_{1}}\},
\end{equation*}
$(\mathbf{v}_{\nu+1}:=\mathbf{v}_{1}),$ by defining its \textquotedblleft
weights\textquotedblright\ as follows: 
\begin{equation*}
\mathbf{v}_{i}\longmapsto-r_{i},\text{ \ }\ \ \overline{\mathbf{v}_{i}%
\mathbf{v}_{i+1}}\longmapsto\left( p_{i},q_{i}\right) ,\ \forall
i\in\{1,\ldots,\nu\}.
\end{equation*}
The \textit{reverse graph} $\mathfrak{G}_{\Delta}^{\text{rev}}$ of $%
\mathfrak{G}_{\Delta}$ is the directed \textsc{wve}$^{2}$\textsc{c}-graph
which is obtained by changing the double weight $\left( p_{i},q_{i}\right) $
of the edge $\overline{\mathbf{v}_{i}\mathbf{v}_{i+1}}$ into $(\widehat{p}%
_{i},q_{i})$ and reversing the initial anticlockwise direction of $\mathfrak{%
G}_{\Delta}$ into clockwise direction (see Figure \ref{Fig.3}).
\end{definition}

\begin{figure}[ht]
\includegraphics[height=5cm, width=10cm]{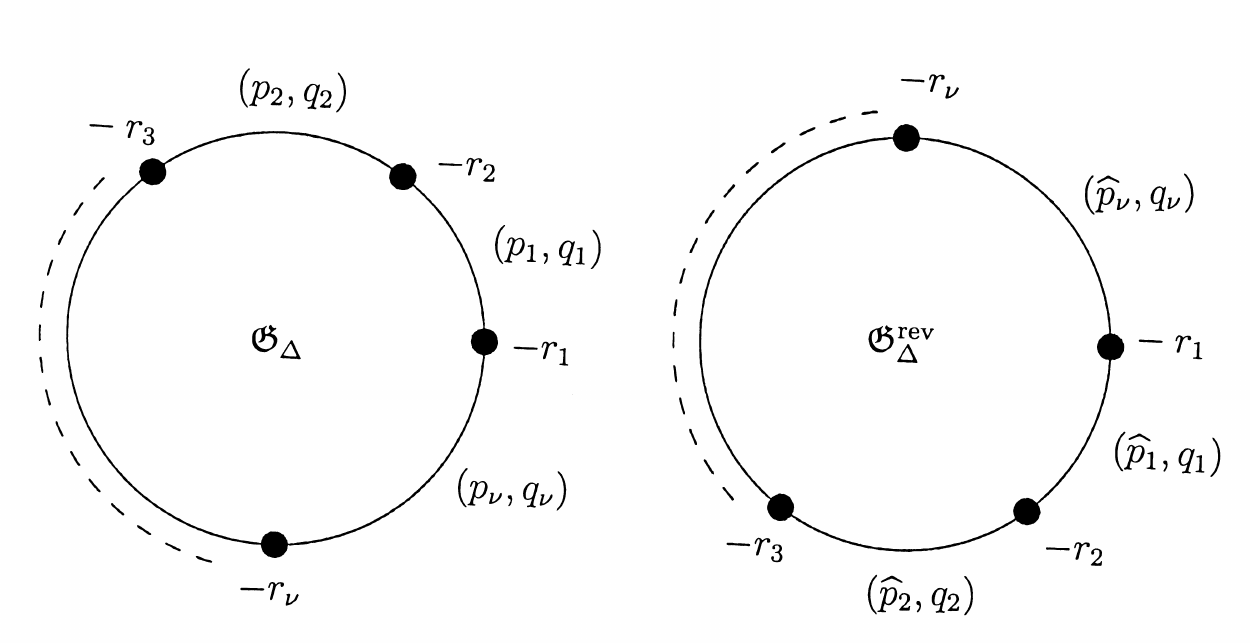}
\caption{}
\label{Fig.3}
\end{figure}

\begin{theorem}
\label{CLASSIFTHM}Let $\Delta,$ $\Delta^{\prime}$ be two complete fans in $%
\mathbb{R}^{2}.$ Then the following conditions are equivalent\emph{%
:\smallskip\ }\newline
\emph{(i)} The compact toric surfaces $X_{\Delta}$ and $X_{\Delta^{\prime}}$
are isomorphic.\smallskip \ \newline
\emph{(ii)} Either $\mathfrak{G}_{\Delta^{\prime}}\overset{\text{\emph{gr.}}}%
{\cong}\mathfrak{G}_{\Delta}$ or $\mathfrak{G}_{\Delta^{\prime}}\overset{%
\text{\emph{gr.}}}{\cong}\mathfrak{G}_{\Delta }^{\text{\emph{rev}}}.$
\end{theorem}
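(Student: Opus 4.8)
The plan is to reduce the geometric equivalence in (i) to a purely lattice-combinatorial equivalence of the two fans, and then to translate the latter into the graph condition (ii). First I would record the bridge statement that, for complete fans, $X_{\Delta}\cong X_{\Delta'}$ as abstract complex surfaces if and only if there exists $g\in\text{GL}_2(\mathbb{Z})$ with $g(\Delta)=\Delta'$. The backward implication is immediate: a lattice automorphism $g$ induces a $\mathbb{T}$-equivariant isomorphism of the affine charts $U_{\sigma_i}$ compatible with the gluing, hence of $X_{\Delta}$ and $X_{\Delta'}$. The forward implication is the genuine input. Since $\mathbb{T}$ is a maximal torus of the identity component $\text{Aut}^{0}(X_{\Delta'})$ (a linear algebraic group for complete toric varieties) and any two maximal tori are conjugate by an element of the identity component, an isomorphism $\phi\colon X_{\Delta}\to X_{\Delta'}$ may, after composition with a suitable automorphism of the target, be assumed $\mathbb{T}$-equivariant, whence it arises from a lattice isomorphism carrying $\Delta$ onto $\Delta'$. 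I expect this rigidity step to be the main obstacle; I would invoke it from the structure theory of toric automorphism groups, the local invariants of Proposition \ref{ISO} then matching up chart by chart.

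Granting the bridge, (i)$\Rightarrow$(ii) becomes a question of orientation. Any such $g$ has $\det g=\pm1$, and $g$ preserves every local invariant: by Proposition \ref{ISO} the pair $(p_i,q_i)$ attached to a cone is a $\text{GL}_2(\mathbb{Z})$-invariant up to the socius ambiguity, while $r_i$ is intrinsic since $-r_i=\overline{C}_i^{2}$ is the self-intersection of a strict transform on the minimal resolution, and minimal resolutions commute with isomorphisms. If $\det g=1$, then $g$ preserves the anticlockwise cyclic ordering of the rays, so it sends $\mathbf{n}_i\mapsto\mathbf{n}'_{i+c}$ for a cyclic shift $c$, identifies the triples $(p_i,q_i,r_i)$, and yields a direction-preserving isomorphism $\mathfrak{G}_{\Delta}\overset{\text{gr.}}{\cong}\mathfrak{G}_{\Delta'}$. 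If $\det g=-1$, then $g$ reverses the cyclic ordering into a clockwise one and simultaneously interchanges the two generators of every cone; by the remark preceding Proposition \ref{ISO} this interchange replaces each $p_i$ by its socius $\widehat{p}_i$. These are precisely the two operations defining the reverse graph, so $\mathfrak{G}_{\Delta'}\overset{\text{gr.}}{\cong}\mathfrak{G}_{\Delta}^{\text{rev}}$.

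For the converse (ii)$\Rightarrow$(i) I would reconstruct a lattice isomorphism directly from the graph data. After a cyclic relabeling I may assume the two graphs carry the same triples $(p_i,q_i,r_i)$; in the reverse case I first compose with a reflection in $\text{GL}_2(\mathbb{Z})$, which by the socius remark converts $\mathfrak{G}_{\Delta}^{\text{rev}}$ back into standard orientation. By Proposition \ref{ISO} applied to $\sigma_1$ and $\sigma'_1$ there is $g\in\text{GL}_2(\mathbb{Z})$ with $g(\mathbf{n}_1)=\mathbf{n}'_1$ and $g(\mathbf{n}_2)=\mathbf{n}'_2$. I then propagate by induction: the defining relations (\ref{CONDri}) for $r_i$, together with $\mathbf{u}_1^{(i)}=\tfrac{1}{q_i}\big((q_i-p_i)\mathbf{n}_i+\mathbf{n}_{i+1}\big)$ and the continued-fraction recursion generating the $\mathbf{u}_j^{(i-1)}$, express $\mathbf{n}_{i+1}$ as an explicit integral combination of $\mathbf{n}_{i-1}$ and $\mathbf{n}_i$ depending only on $(p_{i-1},q_{i-1})$, $(p_i,q_i)$ and $r_i$. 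Since this recursion is identical for both fans and $g$ matches the initial data, it follows that $g(\mathbf{n}_i)=\mathbf{n}'_i$ for every $i$, the closure condition $\mathbf{n}_{\nu+1}=\mathbf{n}_1$ transports correctly, and hence $g(\Delta)=\Delta'$, giving $X_{\Delta}\cong X_{\Delta'}$. The one delicate point of bookkeeping is to check that $\mathbf{u}_{s_{i-1}}^{(i-1)}$, which enters (\ref{CONDri}) when $q_{i-1}>1$, is already pinned down by the previously reconstructed cone $\sigma_{i-1}$; this holds because the entire $\mathbf{u}$-sequence of $\sigma_{i-1}$ is determined once $\mathbf{n}_{i-1},\mathbf{n}_i$ and the continued fraction of $q_{i-1}/(q_{i-1}-p_{i-1})$ are fixed.
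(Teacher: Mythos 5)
You should first be aware that the paper contains no proof of Theorem \ref{CLASSIFTHM}: it is quoted as a known result, with the reader referred to \cite[\S 5]{Dais1}, so there is no internal argument to measure your proposal against; I can only judge it on its own terms, and on those terms it is correct and follows the route any proof of this statement must take. The two combinatorial halves are handled accurately. For (i)$\Rightarrow$(ii), once one has $g\in\mathrm{GL}_2(\mathbb{Z})$ with $g(\Delta)=\Delta'$: if $\det g=1$ the anticlockwise order and the triples $(p_i,q_i,r_i)$ are preserved, while if $\det g=-1$ the cyclic order is reversed and the ordered generators of each cone are interchanged, which by the socius remark preceding Proposition \ref{ISO} replaces $p_i$ by $\widehat{p}_i$ --- exactly the two operations defining $\mathfrak{G}_{\Delta}^{\text{rev}}$. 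For (ii)$\Rightarrow$(i), your reconstruction is sound: each of the four cases of (\ref{CONDri}), unwound using $\mathbf{u}_1^{(i)}=\frac{1}{q_i}((q_i-p_i)\mathbf{n}_i+\mathbf{n}_{i+1})$ and the continued-fraction recursion for $\mathbf{u}_{s_{i-1}}^{(i-1)}$, expresses $\mathbf{n}_{i+1}$ as a linear combination of $\mathbf{n}_{i-1},\mathbf{n}_i$ with coefficients depending only on the weights, so the unimodular map matching the initial cone (it exists by Lemma \ref{pequ}, and automatically has $\det g=1$ because $\det(\mathbf{n}_1,\mathbf{n}_2)=q_1=q_1'>0$) propagates around the circle to give $g(\Delta)=\Delta'$. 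The one unproved input is the rigidity step --- that an abstract isomorphism of complete toric surfaces can be corrected by an automorphism to a $\mathbb{T}$-equivariant one; you rightly isolate it as the crux, and the justification you sketch is the standard one: $\mathrm{Aut}^{0}$ of a complete toric variety is a linear algebraic group (the Albanese is trivial), $\mathbb{T}$ is a maximal torus because it equals its own centralizer, and maximal tori are conjugate, after which equivariant isomorphisms correspond to lattice isomorphisms of fans; for surfaces every fan is simplicial, so the Demazure--Cox structure theory of automorphism groups applies. A fully self-contained write-up would have to include that argument (or replace it by a reduction through the canonical minimal desingularization to the smooth classification in \cite{Oda}), but nothing in your plan would fail.
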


\noindent Here \textquotedblleft$\,\overset{\text{gr.}}{\cong}\,$%
\textquotedblright\ indicates graph-theoretic isomorphism (i.e., a bijection
between the sets of vertices which preserves the corresponding weights). For
further details and for the proof of Theorem \ref{CLASSIFTHM} (which can be
viewed as an appropriate generalization of Proposition \ref{ISO} for
complete fans in $\mathbb{R}^{2}$) the reader is referred to \cite[\S 5]%
{Dais1}. [Convention: To be absolutely compatible with Oda's circular graphs
we omit the weights of the edges which are equal to $(0,1)$, i.e., those
corresponding to basic cones, whenever we draw a \textsc{wve}$^{2}$\textsc{c}%
-graph.]

\section{Toric log del Pezzo surfaces and LDP-polygons\label{LDPPOL}}

\begin{definition}
\noindent{}Let $Q\subset\mathbb{R}^{2}$ be a convex polygon. Denote by $%
\mathcal{V}(Q)$ and $\mathcal{F}(Q)$ the set of its vertices and the set of
its facets (edges), respectively. $Q$ is called an \textit{LDP-polygon} if
it contains the origin in its interior, and its vertices belong to $\mathbb{Z%
}^{2}$ and are primitive. (Obviously, the image of an LDP-polygon under a
unimodular transformation is again an LDP-polygon.)
\end{definition}

\noindent{}If $Q$ is an LDP-polygon, we shall denote by $X_{Q}$ the compact
toric surface $X_{\Delta_{Q}}$ constructed by means of the fan 
\begin{equation*}
\Delta_{Q}:=\left\{ \left. \text{the cones }\sigma_{F}\,\text{\ together
with their faces\ }\right\vert \,F\in\mathcal{F}(Q)\right\} ,
\end{equation*}
where $\sigma_{F}:=\left\{ \left. \lambda\mathbf{x}\,\right\vert \,\mathbf{x}%
\in F\text{ and }\lambda\in\mathbb{R}_{\geq0}\right\} $ for all $F\in%
\mathcal{F}(Q).$

\begin{proposition}
\label{ISOLDP}\emph{(i)} A compact toric surface is log del Pezzo if and
only if it is isomorphic to $X_{Q}$ for some LDP-polygon $Q.\smallskip$%
\newline
\emph{(ii)} There is a one-to-one correspondence 
\begin{equation*}
\left\{ 
\begin{array}{c}
\text{\emph{lattice-equivalence }} \\ 
\text{\emph{classes}} \\ 
\text{\emph{of LDP-polytopes}}%
\end{array}
\right\} \ni\left[ Q\right] \longmapsto\left[ X_{Q}\right] \in\left\{ 
\begin{array}{c}
\text{\emph{isomorphism classes }} \\ 
\text{\emph{of toric log del Pezzo}} \\ 
\text{\emph{surfaces}}%
\end{array}
\right\} .
\end{equation*}
\end{proposition}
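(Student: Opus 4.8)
The plan is to deduce both parts from the standard toric dictionary relating ampleness of the anticanonical divisor to strict convexity of its support function, supplemented by Theorem~\ref{CLASSIFTHM} for the rigidity needed in (ii). For the implication that an LDP-polygon $Q$ yields a log del Pezzo surface, I would first note that $\mathbf{0}\in\mathrm{int}(Q)$ makes the cones over the facets of $Q$ cover $\mathbb{R}^{2}$, so $\Delta_{Q}$ is complete and $X_{Q}$ is a compact toric surface. Its singularities are exactly the orbit points $\mathrm{orb}(\sigma_{F})$ of the non-basic cones, which are cyclic quotient, hence log terminal, by Proposition~\ref{Sing-Prop}; and, a toric surface being simplicial, the Weil divisor $-K_{X_{Q}}=\sum_{F\in\mathcal{F}(Q)}D_{F}$ is automatically $\mathbb{Q}$-Cartier. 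It remains to verify ampleness: the support function of $-K_{X_{Q}}$ takes the value $-1$ at each primitive ray generator, i.e.\ at each vertex of $Q$, and since the vertices of a convex polygon lie in strictly convex position while $\Delta_{Q}$ is precisely the face fan of $Q$, this support function is strictly convex. The toric ampleness criterion then gives that $-K_{X_{Q}}$ is ample; equivalently, $-K_{X_{Q}}$ is the divisor attached to the polar dual polygon $Q^{\circ}$, whose normal fan is $\Delta_{Q}$.

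For the converse, I would run this dictionary backwards. A compact toric log del Pezzo surface is $X_{\Delta}$ for some complete fan $\Delta$ with primitive ray generators $\mathbf{n}_{1},\ldots,\mathbf{n}_{\nu}$; the log terminal and $\mathbb{Q}$-Cartier conditions are again automatic, so the one genuine hypothesis is ampleness of $-K_{X_{\Delta}}$. This ampleness forces the support function of $-K_{X_{\Delta}}$, equal to $-1$ at every $\mathbf{n}_{i}$, to be strictly convex, which says exactly that the $\mathbf{n}_{i}$ are the vertices of the polygon $Q:=\mathrm{conv}(\{\mathbf{n}_{1},\ldots,\mathbf{n}_{\nu}\})$ and that $\Delta$ is its face fan. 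Completeness gives $\mathbf{0}\in\mathrm{int}(Q)$, and the vertices are primitive by hypothesis, so $Q$ is an LDP-polygon with $X_{\Delta}=X_{Q}$, proving (i).

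For (ii), the assignment $[Q]\mapsto[X_{Q}]$ is well defined, since a unimodular $\Psi\in\mathrm{GL}_{2}(\mathbb{Z})$ with $\Psi(Q)=Q'$ carries $\Delta_{Q}$ onto $\Delta_{Q'}$ and hence induces a $\mathbb{T}$-equivariant isomorphism $X_{Q}\cong X_{Q'}$, and it is surjective by (i). The crux is injectivity, which I expect to be the main obstacle: one must upgrade an abstract isomorphism $X_{Q}\cong X_{Q'}$ to a lattice equivalence of the two polygons. Theorem~\ref{CLASSIFTHM} supplies the decisive step, identifying $X_{Q}\cong X_{Q'}$ with a graph isomorphism $\mathfrak{G}_{\Delta_{Q'}}\cong\mathfrak{G}_{\Delta_{Q}}$ or $\mathfrak{G}_{\Delta_{Q'}}\cong\mathfrak{G}_{\Delta_{Q}}^{\mathrm{rev}}$. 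Because an LDP-polygon is recovered from its fan as the convex hull of the primitive ray generators, such a coincidence of \textsc{wve}$^{2}$\textsc{c}-graphs translates---through the reconstruction of a complete fan from its triples $(p_{i},q_{i},r_{i})$ up to $\mathrm{GL}_{2}(\mathbb{Z})$ that underlies the proof of Theorem~\ref{CLASSIFTHM}---into a unimodular $\Psi$ matching $\Delta_{Q}$ with $\Delta_{Q'}$, the reverse case producing an orientation-reversing $\Psi$ in accordance with the socius operation $p\mapsto\widehat{p}$. This $\Psi$ then carries $Q$ to $Q'$, so $[Q]=[Q']$ and the correspondence is a bijection.
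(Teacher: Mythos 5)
Your proposal is correct and follows essentially the same route as the paper: for (ii) both arguments invoke Theorem~\ref{CLASSIFTHM} (more precisely, the fan-reconstruction underlying its proof in \cite[\S 5]{Dais1}) to upgrade an isomorphism $X_{Q}\cong X_{Q^{\prime}}$ to a unimodular transformation carrying $\Delta_{Q}$ onto $\Delta_{Q^{\prime}}$, and then observe that this transformation necessarily maps $Q$, the convex hull of the primitive ray generators, onto $Q^{\prime}$. For (i) the paper merely cites \cite[Remark 6.7]{Dais1}, and your support-function/Fano-polytope argument is the standard proof of exactly that statement, so nothing differs in substance.
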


\begin{proof}
(i) This follows from \cite[Remark 6.7, p. 107]{Dais1}.\smallskip \ \newline
(ii) If $Q$ is an LDP-polygon, $\Psi:\mathbb{R}^{2}\rightarrow \mathbb{R}%
^{2},$ $\Psi\left( \mathbf{x}\right) :=\Xi\,\mathbf{x,}$ $\Xi\in$ GL$_{2}(%
\mathbb{Z}),$ a unimodular transformation, and $Q^{\prime}:=\Psi(Q),$ then%
\begin{equation*}
\mathfrak{G}_{\Delta_{Q^{\prime}}}\overset{\text{gr.}}{\cong}\mathfrak{G}%
_{\Delta_{Q}},\text{ whenever }\det(\Xi)=1,\text{ and }\mathfrak{G}%
_{\Delta_{Q^{\prime}}}\overset{\text{gr.}}{\cong}\mathfrak{G}_{\Delta_{Q}}^{%
\text{rev}},\text{ whenever }\det(\Xi)=-1.
\end{equation*}
By Theorem \ref{CLASSIFTHM}, $X_{Q}$ and $X_{Q^{\prime}}$ are isomorphic.
And conversely, if $X_{Q}$ and $X_{Q^{\prime}}$ are isomorphic for some
LDP-polygons $Q,Q^{\prime},$ then 
\begin{equation}
\text{either \ }\mathfrak{G}_{\Delta_{Q^{\prime}}}\overset{\text{gr.}}{\cong}%
\mathfrak{G}_{\Delta_{Q}}\ \ \text{or\ \ }\mathfrak{G}_{\Delta_{Q^{\prime}}}%
\overset{\text{gr.}}{\cong}\mathfrak{G}_{\Delta_{Q}}^{\text{rev}}.
\label{TWOISO}
\end{equation}
Thus, by (\ref{TWOISO}) there exists an automorphism $\varpi$ of the lattice 
$\mathbb{Z}^{2}=\mathbb{Z}\tbinom{1}{0}\oplus\mathbb{Z}\tbinom{0}{1} $ with 
\begin{equation*}
\det(\varpi)=\left\{ 
\begin{array}{rr}
1, & \text{in the first case,} \\ 
-1, & \text{in the second case,}%
\end{array}
\right.
\end{equation*}
such that $\varpi_{\mathbb{R}}(\Delta_{Q})=\Delta_{Q^{\prime}}$
(preserving/reversing the ordering of the cones), where%
\begin{equation*}
\varpi_{\mathbb{R}}:=\varpi\otimes_{\mathbb{Z}}\text{id}_{\mathbb{R}}:%
\mathbb{R}^{2}\longrightarrow\mathbb{R}^{2}
\end{equation*}
denotes its scalar extension. Obviously, $\varpi_{\mathbb{R}}(Q)=Q^{\prime}.$
\end{proof}

\begin{note}
Let $Q$ be an arbitrary LDP-polygon. For each $F\in \mathcal{F}(Q)$ assume
that $\sigma _{F}$ is a $(p_{F},q_{F})$-cone$.$ Then from \cite[Lemma 6.8]%
{Dais1} one concludes that the index $\ell $ of $X_{Q}$ equals%
\begin{equation}
\ell =\func{lcm}\left\{ \left. l_{F}\right\vert F\in \mathcal{F}(Q)\right\} 
\text{ with }l_{F}:=\frac{q_{F}}{\gcd (q_{F},p_{F}-1)}.  \label{INDEXFORMEL}
\end{equation}%
{} If we consider the \textit{polar polygon} $\mathring{Q}:=\left\{ \left. 
\mathbf{y}\in \text{Hom}_{\mathbb{R}}(\mathbb{R}^{2},\mathbb{R})\right\vert
\left\langle \mathbf{y},\mathbf{x}\right\rangle \geq -1,\ \forall \mathbf{x}%
\in Q\right\} $ of $Q,$ where $\left\langle \cdot ,\cdot \right\rangle :$ Hom%
$_{\mathbb{R}}(\mathbb{R}^{2},\mathbb{R})\times \mathbb{R}^{2}\rightarrow 
\mathbb{R}$ denotes the usual inner product, then $\mathring{Q}$ contains
the origin in its interior, and the index $\ell $ of $X_{Q}$ equals 
\begin{equation*}
\ell =\min \left\{ \left. \kappa \in \mathbb{Z}_{>0}\right\vert \mathcal{V}%
(\kappa \mathring{Q})\subset \mathbb{Z}^{2}\right\} \ \ \ (\text{ with }%
\kappa \mathring{Q}:=\left\{ \kappa \mathbf{y}\left\vert \mathbf{y}\in 
\mathring{Q}\right. \right\} ).
\end{equation*}%
Moreover, if $F\in \mathcal{F}(Q),$ denoting by $\boldsymbol{\eta }_{F}$ the
unique primitive $\boldsymbol{\eta }_{F}\in \mathbb{Z}^{2}$ for which $%
\left\langle \boldsymbol{\eta }_{F},\mathbf{x}\right\rangle =l_{F},\ \forall 
\mathbf{x}\in F,$ we have 
\begin{equation}
\mathcal{V}(\mathring{Q})=\left\{ \left. \frac{-1}{l_{F}}\boldsymbol{\eta }%
_{F}\right\vert F\in \mathcal{F}(Q)\right\} .  \label{VERTICESPOLAR}
\end{equation}
\end{note}

\section{Proof of the classification theorem \protect\ref{MAIN1}\label%
{PROOFOFMAIN1}}

\noindent {}Let $Q$ be an LDP-polygon with vertex set $\mathcal{V}%
(Q)=\left\{ \mathbf{n}_{1},\ldots ,\mathbf{n}_{\nu }\right\} ,$ $\nu \geq 3.$
Assume that $\sigma _{i},$ $i\in \{1,\ldots ,\nu \},$ are the $2$%
-dimensional cones of $\Delta _{Q},$ defined and ordered (anticlockwise) as
in (\ref{MANYCONES}), and that only one of these cones, say $\sigma _{1},$
is a non-basic $(p,q)$-cone (i.e., $q>1$). By Lemma \ref{pequ}, there is a
unimodular transformation $\Psi _{1}:\mathbb{R}^{2}\longrightarrow \mathbb{R}%
^{2},$ $\Psi _{1}\left( \mathbf{x}\right) :=\Xi \,\mathbf{x,}$ $\Xi \in $ GL$%
_{2}(\mathbb{Z}),$ such that%
\begin{equation*}
\Psi _{1}\left( \sigma _{1}\right) =\mathbb{R}_{\geq 0}\tbinom{1}{0}+\mathbb{%
R}_{\geq 0}\tbinom{p}{q}.
\end{equation*}%
Without loss of generality we may assume that $\det (\Xi )=1$ (because
otherwise the proof of Theorem \ref{MAIN1} which follows can be performed
similarly if one works with the vertices ordered clockwise). This means that 
$\Psi _{1}(\mathbf{n}_{1})=\tbinom{1}{0}$ and $\Psi _{1}(\mathbf{n}_{2})=%
\tbinom{p}{q}.$ We set $\mathbf{w}_{i}:=\Psi _{1}(\mathbf{n}_{i}),$ for all $%
i\in \{1,\ldots ,\nu \}$ (and $\mathbf{w}_{\nu +1}:=\mathbf{w}_{1}$). Since
all cones of $\Delta _{\Psi _{1}\left( Q\right) }$ are strongly convex and $%
\left\vert \Delta _{\Psi _{1}\left( Q\right) }\right\vert =\mathbb{R}^{2}$, 
\begin{equation}
\exists \mu \in \{3,\ldots ,\nu \}:\mathbf{w}_{\mu }=\tbinom{a}{b}\in
\left\{ \left. \tbinom{x}{y}\in \mathbb{R}^{2}\right\vert x<0\right\} \cap 
\mathbb{Z}^{2}.  \label{DEFVM}
\end{equation}

\begin{lemma}
\label{BOTHBASIC}\emph{(i)} The cones $\mathbb{R}_{\geq 0}\mathbf{w}_{\mu }+%
\mathbb{R}_{\geq 0}\mathbf{w}_{1}$ and $\mathbb{R}_{\geq 0}\mathbf{w}_{2}+%
\mathbb{R}_{\geq 0}\mathbf{w}_{\mu }$ are basic.\smallskip\ \newline
\emph{(ii)} $q=p+1$ \emph{(}and consequently, $\widehat{p}=p$ and $\mathbf{w}%
_{\mu }=\tbinom{-1}{-1}$\emph{).}
\end{lemma}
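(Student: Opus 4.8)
The plan is to reduce both parts to the two determinant identities $\det(\mathbf{w}_2,\mathbf{w}_\mu)=1$ and $\det(\mathbf{w}_\mu,\mathbf{w}_1)=1$, which by Proposition \ref{KEY-PROP} is precisely the assertion (i) that the two displayed cones are basic; part (ii) will then fall out by a one-line computation. Indeed, once $\det(\mathbf{w}_\mu,\mathbf{w}_1)=1$ is known, writing $\mathbf{w}_\mu=\tbinom{a}{b}$ and using $\mathbf{w}_1=\tbinom{1}{0}$ forces $b=-1$; substituting into $\det(\mathbf{w}_2,\mathbf{w}_\mu)=-p-qa=1$ gives $a=-(p+1)/q$, and since $a$ must be a negative integer while $0<p+1\le q$ (because $0\le p<q$), the only possibility is $a=-1$ and $q=p+1$. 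Hence $\mathbf{w}_\mu=\tbinom{-1}{-1}$, and from $p\equiv-1\ (\mathrm{mod}\ q)$ one gets $p^2\equiv1$, i.e. $\widehat{p}=p$. So the entire lemma rests on (i).

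To prove (i) I would exploit the convex position of the vertices together with the fact that every cone except $\sigma_1$ is basic. I split the boundary into the two chains $\mathbf{w}_2,\mathbf{w}_3,\dots,\mathbf{w}_\mu$ and $\mathbf{w}_\mu,\mathbf{w}_{\mu+1},\dots,\mathbf{w}_\nu,\mathbf{w}_1$, along each of which all consecutive determinants equal $1$. For an interior vertex $\mathbf{w}_i$ of such a chain both adjacent cones are basic, so $\mathbf{w}_{i-1}+\mathbf{w}_{i+1}=r_i\mathbf{w}_i$ with $r_i$ as in Definition \ref{INTROOFRis}. Expressing everything in the $\mathbb{Z}$-basis $\{\mathbf{w}_{i-1},\mathbf{w}_i\}$ (legitimate since $\sigma_{i-1}$ is basic) and computing the turning determinant $\det(\mathbf{w}_i-\mathbf{w}_{i-1},\mathbf{w}_{i+1}-\mathbf{w}_i)=2-r_i$, strict convexity of $Q$ at the genuine vertex $\mathbf{w}_i$ (a left turn) forces $r_i\le1$.

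The heart of the argument is then a determinant recursion. Fixing the initial vertex $\mathbf{v}_0$ of a chain and setting $A_j:=\det(\mathbf{v}_0,\mathbf{v}_j)$, the basic-cone relations yield $A_{j+1}=r_jA_j-A_{j-1}$ with $A_0=0$ and $A_1=1$. Because the relevant cone is strongly convex (angular width $<\pi$) and the $\mathbf{v}_j$ are listed anticlockwise, one has $A_j>0$ for every $j\ge1$; but $A_2=r_1\le1$ then forces $A_2=1$ and $r_1=1$, after which $A_3=r_2A_2-A_1=r_2-1\le0$ contradicts $A_3>0$. Thus each chain has at most one interior vertex, and in either case the determinant of its two endpoints equals $1$. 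Applying this to both chains gives $\det(\mathbf{w}_2,\mathbf{w}_\mu)=\det(\mathbf{w}_\mu,\mathbf{w}_1)=1$, which is (i) (and incidentally bounds $\nu\le5$, matching Theorem \ref{MAIN1}(i)).

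The main obstacle I anticipate is not the recursion but arranging that both cones $\mathbb{R}_{\ge0}\mathbf{w}_2+\mathbb{R}_{\ge0}\mathbf{w}_\mu$ and $\mathbb{R}_{\ge0}\mathbf{w}_\mu+\mathbb{R}_{\ge0}\mathbf{w}_1$ are strongly convex, since only then do the two boundary chains literally coincide with the unions $\sigma_2\cup\cdots\cup\sigma_{\mu-1}$ and $\sigma_\mu\cup\cdots\cup\sigma_\nu$, and only then is the positivity $A_j>0$ available. This dictates the choice of $\mathbf{w}_\mu$: it must lie in the third quadrant (roughly antipodal to the edge $\sigma_1$), i.e. its angle must fall in the range $(\pi,\theta_2+\pi)$, where $\theta_2$ is the angle of $\mathbf{w}_2$. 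I would argue existence of such a vertex from the fact that $Q$ contains the origin in its interior (so the fan has a cone meeting the negative‑diagonal direction), and use $\mathbf{w}_1=\tbinom{1}{0}$ together with $0\le p<q$ to check its relevant generator has negative first coordinate; the eventual conclusion $\mathbf{w}_\mu=\tbinom{-1}{-1}$ then confirms a posteriori that this vertex is the unique one with $x<0$.
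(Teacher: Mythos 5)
The core of your argument is correct and genuinely different from the paper's. The paper proves (i) by a lattice-point covering argument: any lattice point of $\text{conv}(\{\mathbf{0},\mathbf{w}_{\mu},\mathbf{w}_{1}\})$ other than its vertices would lie in one of the basic triangles $\text{conv}(\{\mathbf{0},\mathbf{w}_{\xi-1},\mathbf{w}_{\xi}\})$, which is impossible; Proposition \ref{KEY-PROP} then gives basicness, and (ii) follows from $|\det(\mathbf{w}_{\mu},\mathbf{w}_{1})|=|\det(\mathbf{w}_{2},\mathbf{w}_{\mu})|=1$ by a four-case sign analysis. Your chain recursion replaces all of this: the relation $\mathbf{w}_{i-1}+\mathbf{w}_{i+1}=r_{i}\mathbf{w}_{i}$ at interior chain vertices, the bound $r_{i}\leq 1$ from the strict left turn (Lemma \ref{NCOLVERT}), and the recursion $A_{j+1}=r_{j}A_{j}-A_{j-1}$ are all sound, and they yield more in one stroke (at most one interior vertex per chain, hence $\nu\leq 5$, which the paper only obtains later in Lemma \ref{KEY-LEMMA}); your derivation of (ii) from the signed equalities $\det(\mathbf{w}_{\mu},\mathbf{w}_{1})=\det(\mathbf{w}_{2},\mathbf{w}_{\mu})=1$ is also a correct streamlining of the paper's case analysis.

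However, there is a genuine gap at exactly the step you postpone. The positivity $A_{j}>0$, on which the recursion rests, holds only if the chain sweeps a total angle $<\pi$; equivalently, you need $\mathbf{w}_{\mu}$ to lie in the \emph{open} antipodal sector, at angle strictly between $\pi$ and $\pi+\theta_{2}$ (where $\theta_{2}$ is the angle of $\mathbf{w}_{2}=\tbinom{p}{q}$). This is essential, not cosmetic: for the chain $\tbinom{1}{0},\tbinom{0}{1},\tbinom{-1}{-1}$ (fan of $\mathbb{P}_{\mathbb{C}}^{2}$, all cones basic, all turns left) one has $A_{2}=\det(\tbinom{1}{0},\tbinom{-1}{-1})=-1$. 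Your existence argument does not close this gap: completeness of the fan gives a cone $\sigma_{j}$ containing the antipodal direction, but a priori this \emph{single basic cone could contain the whole sector} $-\sigma_{1}$, in which case its generators sit at angles $\leq\pi$ and $\geq\pi+\theta_{2}$ respectively --- neither is a vertex in the open sector, yet both can have negative first coordinate, so checking that ``the relevant generator has negative first coordinate'' does not detect this failure. Ruling out that configuration needs a further argument, which your own tools do supply but which you must actually run: if $\sigma_{j}=\mathbb{R}_{\geq 0}\mathbf{u}+\mathbb{R}_{\geq 0}\mathbf{v}\supseteq-\sigma_{1}$ were basic, then the chains from $\mathbf{w}_{2}$ to $\mathbf{u}$ and from $\mathbf{v}$ to $\mathbf{w}_{1}$ do satisfy the half-plane condition, so your recursion gives $\det(\mathbf{w}_{2},\mathbf{u})=\det(\mathbf{u},\mathbf{v})=\det(\mathbf{v},\mathbf{w}_{1})=1$; writing $\mathbf{u}=\tbinom{u_{1}}{u_{2}}$, $\mathbf{v}=\tbinom{v_{1}}{v_{2}}$, these equalities and the angular constraints force $v_{2}=-1$, $v_{1}\geq 0$, $u_{2}\geq 1$, whence $u_{2}(p+qv_{1})=1-q<0$, a contradiction. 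Finally, since (\ref{DEFVM}) takes $\mathbf{w}_{\mu}$ to be an \emph{arbitrary} vertex with negative first coordinate, your closing ``a posteriori'' uniqueness claim also needs to be made explicit: the interior vertices of the two chains are forced to be $\mathbf{w}_{2}+\mathbf{w}_{\mu}=\tbinom{p-1}{p}$ and $\mathbf{w}_{\mu}+\mathbf{w}_{1}=\tbinom{0}{-1}$, whose first coordinates are nonnegative, so $\tbinom{-1}{-1}$ is indeed the only vertex with $x<0$.
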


\begin{proof}
(i) Using Proposition \ref{KEY-PROP} it suffices to prove that {\small 
\begin{equation}
\text{conv}(\{\mathbf{0},\mathbf{w}_{\mu },\mathbf{w}_{1}\})\cap \mathbb{Z}%
^{2}=\{\mathbf{0},\mathbf{w}_{\mu },\mathbf{w}_{1}\},\text{ conv}(\{\mathbf{0%
},\mathbf{w}_{2},\mathbf{w}_{\mu }\})\cap \mathbb{Z}^{2}=\{\mathbf{0},%
\mathbf{w}_{2},\mathbf{w}_{\mu }\}.  \label{CONVCONV}
\end{equation}%
} \noindent Obviously, $\mathcal{V}(\Psi _{1}\left( Q\right) )\mathbb{r}\{%
\mathbf{w}_{\mu },\mathbf{w}_{1},\mathbf{w}_{2}\}$ is either empty or a
subset of $(\mathcal{U}_{1}\cup \mathcal{U}_{2})\cap \mathbb{Z}^{2},$ where%
\begin{equation*}
\mathcal{U}_{1}:=\left\{ \tbinom{x}{y}\in \mathbb{R}^{2}\left\vert y<0,\ \
y<x,\text{ and }qx-(p-1)y<q\right. \right\} ,
\end{equation*}%
and%
\begin{equation*}
\mathcal{U}_{2}:=\left\{ \tbinom{x}{y}\in \mathbb{R}^{2}\left\vert qx<py,\ \
y>x,\text{ and }qx-(p-1)y<q\right. \right\} .
\end{equation*}%
($\left\{ \left. \tbinom{x}{y}\in \mathbb{R}^{2}\right\vert
qx-(p-1)y=q\right\} $ is the supporting line of the edge conv$(\{\mathbf{w}%
_{1},\mathbf{w}_{2}\})$ of $\Psi _{1}\left( Q\right) .$) If conv$(\{\mathbf{w%
}_{\mu },\mathbf{w}_{1}\})\in \mathcal{F}(\Psi _{1}\left( Q\right) ),$ i.e.,
if $\mu =\nu ,$ the first equality in (\ref{CONVCONV}) is obvious (because $%
\Psi _{1}\left( \sigma _{\nu }\right) $ is basic by definition). If conv$(\{%
\mathbf{w}_{\mu },\mathbf{w}_{1}\})\notin \mathcal{F}(\Psi _{1}\left(
Q\right) ),$ then $\mathcal{V}(\Psi _{1}\left( Q\right) )\cap \mathcal{U}%
_{1}\neq \varnothing ,$ and if we would assume that 
\begin{equation*}
\exists \mathbf{m}\in (\text{conv}(\{\mathbf{0},\mathbf{w}_{\mu },\mathbf{w}%
_{1}\})\cap \mathbb{Z}^{2})\mathbb{r}\{\mathbf{0},\mathbf{w}_{\mu },\mathbf{w%
}_{1}\},
\end{equation*}%
then there would be a%
\begin{equation*}
\xi \in \{\mu +1,\mu +2,\ldots ,\nu ,\nu +1\}:\mathbf{m}\in (\text{conv}(\{%
\mathbf{0},\mathbf{w}_{\xi -1},\mathbf{w}_{\xi }\})\cap \mathbb{Z}^{2})%
\mathbb{r}\{\mathbf{0},\mathbf{w}_{\xi -1},\mathbf{w}_{\xi }\},
\end{equation*}%
leading to contradiction (because $\Psi _{1}\left( \sigma _{\xi -1}\right) $
is basic by definition). Similar arguments (using $\mathcal{U}_{2}$ instead
of $\mathcal{U}_{1}$) show that the second equality in (\ref{CONVCONV}) is
also true.\smallskip\ \newline
(ii) By (i), $\left\vert \det (\mathbf{w}_{\mu },\mathbf{w}_{1})\right\vert
=\left\vert \det (\mathbf{w}_{2},\mathbf{w}_{\mu })\right\vert =1,$ i.e., $%
b\in \{\pm 1\},$ and one of the following conditions is satisfied:{\small 
\begin{eqnarray}
&&%
\begin{array}{ccc}
b=1 & \text{and} & aq-p=1,%
\end{array}
\label{COND1} \\
&&%
\begin{array}{ccc}
b=1 & \text{and} & aq-p=-1,%
\end{array}
\label{COND2} \\
&&%
\begin{array}{ccc}
b=-1 & \text{and} & aq+p=1,%
\end{array}
\label{COND3} \\
&&%
\begin{array}{ccc}
b=-1 & \text{and} & aq+p=-1.%
\end{array}
\label{COND4}
\end{eqnarray}%
} (\ref{COND1}) gives $a=\tfrac{1+p}{q}>0$ which is not true (because $a<0$
by (\ref{DEFVM})). Similarly, (\ref{COND2}) is not true (as we would have $a=%
\tfrac{p-1}{q}\geq 0$ ). In case (\ref{COND3}), 
\begin{equation*}
a=\tfrac{-(p-1)}{q}\Rightarrow q\mid p-1\Rightarrow p<q\leq p-1\text{ (being
absurd).}
\end{equation*}%
Therefore, (\ref{COND4}) is necessarily true and $a=-\tfrac{p+1}{q}$. Now
since $q\mid p+1$ and $p<q,$ we have%
\begin{equation*}
q=p+1\Rightarrow \lbrack a=-1,\text{ }b=-1]\Rightarrow \mathbf{w}_{\mu }=%
\tbinom{-1}{-1}
\end{equation*}%
and $p+1\mid \left( p^{2}-1\right) \Rightarrow \widehat{p}=p.$
\end{proof}

\begin{lemma}
There exists a unimodular transformation $\Psi _{2}:\mathbb{R}%
^{2}\rightarrow \mathbb{R}^{2}$ such that 
\begin{equation*}
\Psi _{2}(\Psi _{1}\left( \sigma _{1}\right) )=\mathbb{R}_{\geq 0}\tbinom{1}{%
-1}+\mathbb{R}_{\geq 0}\tbinom{p}{1},
\end{equation*}%
with $\Psi _{2}\tbinom{1}{0}=\tbinom{1}{-1},$ $\Psi _{2}\tbinom{p}{p+1}=%
\tbinom{p}{1},$ and $\Psi _{2}\tbinom{-1}{-1}=\tbinom{-1}{0}.$
\end{lemma}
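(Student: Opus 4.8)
The plan is to produce $\Psi_2$ explicitly as multiplication by a matrix $M\in\mathrm{GL}_2(\mathbb{Z})$, determining $M$ from two of the three prescribed conditions and then verifying the third. Since a unimodular transformation is $\mathbb{R}$-linear, it is completely determined by its action on any two linearly independent vectors; I would use the linearly independent pair $\tbinom{1}{0}$ and $\tbinom{-1}{-1}$, whose images are fixed by the first and third conditions. The requirement $\Psi_2\tbinom{1}{0}=\tbinom{1}{-1}$ forces the first column of $M$ to be $\tbinom{1}{-1}$; writing the second column as $\tbinom{s}{t}$ and imposing $M\tbinom{-1}{-1}=\tbinom{-1}{0}$ gives $-1-s=-1$ and $1-t=0$, hence $s=0$, $t=1$, so that $M=\left(\begin{smallmatrix}1&0\\-1&1\end{smallmatrix}\right)$. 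Since $\det M=1$, the map $\Psi_2(\mathbf{x}):=M\mathbf{x}$ is a genuine unimodular transformation.

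It then remains to check the middle condition $\Psi_2\tbinom{p}{p+1}=\tbinom{p}{1}$. A direct computation gives $M\tbinom{p}{p+1}=\tbinom{p}{-p+(p+1)}=\tbinom{p}{1}$, as required. Here one crucially invokes Lemma \ref{BOTHBASIC}(ii), which supplies $q=p+1$: indeed, applying $M$ to the second generator $\tbinom{p}{q}$ of $\Psi_1(\sigma_1)$ yields $\tbinom{p}{q-p}$, and this equals $\tbinom{p}{1}$ precisely because $q-p=1$. Thus the three prescribed images are simultaneously consistent, and this consistency is exactly what the preceding lemma guarantees.

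Finally, since $\Psi_2$ is linear it carries each generating ray of the cone to a ray, so
\begin{equation*}
\Psi_2(\Psi_1(\sigma_1))=\mathbb{R}_{\geq0}\,\Psi_2\tbinom{1}{0}+\mathbb{R}_{\geq0}\,\Psi_2\tbinom{p}{p+1}=\mathbb{R}_{\geq0}\tbinom{1}{-1}+\mathbb{R}_{\geq0}\tbinom{p}{1},
\end{equation*}
which is the asserted equality. I do not anticipate any real obstacle here: the statement is a concrete normalization, and the only subtlety is that three conditions are imposed on a map with just two degrees of freedom, so the actual content is the verification that these conditions do not conflict. That verification reduces, via the identity $q=p+1$ from Lemma \ref{BOTHBASIC}, to the single arithmetic check above.
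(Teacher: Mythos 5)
Your proposal is correct and is essentially the paper's own proof: the paper simply exhibits the matrix $\left(\begin{smallmatrix}1&0\\-1&1\end{smallmatrix}\right)$ and declares it sufficient, which is exactly the matrix $M$ you derived from the first and third conditions. Your write-up merely adds the (routine but worthwhile) explanation of how the matrix is pinned down and the explicit check of the middle condition via $q=p+1$ from Lemma \ref{BOTHBASIC}(ii).
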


\begin{proof}
It is enough to define $\Psi_{2}(\mathbf{x}):=\left( 
\begin{array}{rc}
\QATOPD..{1}{-1} & \QATOPD..{0}{1}%
\end{array}
\right) \mathbf{x},\ \forall\mathbf{x}\in\mathbb{R}^{2}$.
\end{proof}

\noindent {}Next, we set $\Upsilon :=\Psi _{2}\circ \Psi _{1}$, $\mathbf{v}%
_{i}:=\Upsilon (\mathbf{n}_{i}),$ for all $i\in \{1,\ldots ,\nu \}$ (and $%
\mathbf{v}_{\nu +1}:=\mathbf{v}_{1}$). Starting with the minimal generators $%
\mathbf{v}_{1}=\tbinom{1}{-1},$ $\mathbf{v}_{2}=\tbinom{p}{1}$ of the unique
non-basic cone $\Upsilon \left( \sigma _{1}\right) $ of $\Delta _{\Upsilon
\left( Q\right) },$ and with $\mathbf{v}_{\mu }=\tbinom{-1}{0}\in \mathcal{V}%
(\Upsilon \left( Q\right) ),$ we shall study the restrictions on the
location of the remaining vertices of $\Upsilon \left( Q\right) $ in detail.

\begin{lemma}
\label{NCOLVERT}There is no convex polygon having three collinear vertices.
\end{lemma}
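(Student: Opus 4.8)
The plan is to give a short proof by contradiction resting only on the defining property of a vertex of a convex polygon: a vertex is an \emph{extreme point}, i.e., a point of the polygon that does not lie in the relative interior of any line segment contained in the polygon. Equivalently, a vertex cannot be written as $t\mathbf{x}+(1-t)\mathbf{y}$ with $\mathbf{x},\mathbf{y}\in Q$, $\mathbf{x}\neq\mathbf{y}$, and $t\in(0,1)$.

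So first I would assume, for contradiction, that a convex polygon $Q$ possesses three collinear vertices $\mathbf{a},\mathbf{b},\mathbf{c}\in\mathcal{V}(Q)$, necessarily pairwise distinct, all lying on a common affine line $L\subset\mathbb{R}^{2}$. The next step is the only geometric input: since three distinct points of a line are totally ordered along $L$, exactly one of them lies strictly between the other two; relabel so that this middle point is $\mathbf{b}$. Then $\mathbf{b}=t\mathbf{a}+(1-t)\mathbf{c}$ for some $t\in(0,1)$, with $\mathbf{a}\neq\mathbf{c}$ and $\mathbf{a},\mathbf{c}\in Q$. This displays $\mathbf{b}$ as a point in the relative interior of the segment $\text{conv}(\{\mathbf{a},\mathbf{c}\})\subseteq Q$, so $\mathbf{b}$ is not an extreme point of $Q$, contradicting the fact that every vertex of a convex polygon is an extreme point.

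The argument is entirely elementary, so there is no serious obstacle; the only points deserving care are the identification of ``vertex'' with ``extreme point'' (which is the standard meaning of $\mathcal{V}(Q)$ used throughout the paper) and the remark that among three distinct collinear points precisely one lies between the other two. Note in particular that neither any convexity of $Q$ beyond its containing the segment $\text{conv}(\{\mathbf{a},\mathbf{c}\})$, nor the lattice/primitivity hypotheses on an LDP-polygon, are needed: the statement holds for every convex polygon, and it will be applied below simply to exclude collinear triples among the $\mathbf{v}_{i}$.
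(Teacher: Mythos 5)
Your proof is correct and rests on exactly the same fact the paper invokes: the vertices of a convex polygon are its extreme points, so a collinear middle vertex would lie in the relative interior of a segment contained in the polygon. The paper simply cites this extreme-point property (referring to Br\o ndsted) without writing out the contradiction, so your argument is the same proof with the details filled in.
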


\begin{proof}
This is due to the fact that the vertices of a convex polygon are its
extreme points. (See, e.g., \cite[p. 30 and p. 45]{Brondsted}.)
\end{proof}

\begin{lemma}
\label{KEY-LEMMA}The LDP-polygon $\Upsilon\left( Q\right) $\emph{\ (}with $%
\mathcal{V}(\Upsilon\left( Q\right) )=\left\{ \mathbf{v}_{1},\ldots,\mathbf{v%
}_{\nu}\right\} $\emph{)} has the following properties\emph{:\smallskip} 
\newline
\emph{(i)} Setting $k:=\nu-2,$ we have necessarily $k\in\{1,2,3\}.$
Moreover, $\Upsilon\left( Q\right) =Q_{p}^{\left[ k\right] }$ for $%
k\in\{1,3\},$ and either $\Upsilon\left( Q\right) =Q_{p}^{\left[ 2\right] }$
or $\Upsilon\left( Q\right) =\check{Q}_{p}^{\left[ 2\right] }$ for $k=2,$
where $Q_{p}^{\left[ 1\right] },Q_{p}^{\left[ 2\right] },Q_{p}^{\left[ 3%
\right] }$ are the polygons defined in \emph{(\ref{DEFQS}),} and%
\begin{equation*}
\check{Q}_{p}^{\left[ 2\right] }:=\text{\emph{conv}}\left( \left\{ \tbinom{1%
}{-1},\tbinom{p}{1},\tbinom{-1}{0},\tbinom{0}{-1}\right\} \right) .
\end{equation*}
\emph{(ii)} $Q_{p}^{\left[ 2\right] }$ and $\check{Q}_{p}^{\left[ 2\right] }$
are lattice-equivalent.
\end{lemma}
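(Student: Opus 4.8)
The plan is to exploit the single non-basic cone hypothesis. Since every cone $\sigma_{i}$ with $i\geq 2$ is basic, Proposition \ref{KEY-PROP} gives $\left\vert \det(\mathbf{v}_{i},\mathbf{v}_{i+1})\right\vert =1$ for those indices, and because the $\mathbf{v}_{i}$ run anticlockwise the sign is $+1$, i.e. $\det(\mathbf{v}_{i},\mathbf{v}_{i+1})=1$. Writing $\mathbf{v}_{i}=\tbinom{x_{i}}{y_{i}}$, each such relation determines $\mathbf{v}_{i+1}$ from $\mathbf{v}_{i}$ up to one free integer, and the idea is to pin this integer down at every step by combining unimodularity with strict convexity (each interior turn is a \emph{left} turn) and with Lemma \ref{NCOLVERT} (no three vertices are collinear). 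I would cut the circular sequence $\mathbf{v}_{1},\ldots ,\mathbf{v}_{\nu }$ at the three known vertices $\mathbf{v}_{1}=\tbinom{1}{-1}$, $\mathbf{v}_{2}=\tbinom{p}{1}$, $\mathbf{v}_{\mu }=\tbinom{-1}{0}$ (the last two furnished by Lemma \ref{BOTHBASIC} and the normalization $\Psi _{2}$) into the arc from $\mathbf{v}_{2}$ to $\mathbf{v}_{\mu }$ and the arc from $\mathbf{v}_{\mu }$ back to $\mathbf{v}_{1}$, and bound the number of intermediate vertices on each arc by at most one.

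On the first arc, if a vertex $\mathbf{v}_{3}$ lies strictly between $\mathbf{v}_{2}$ and $\mathbf{v}_{\mu }$, then $\det(\mathbf{v}_{2},\mathbf{v}_{3})=1$ forces $x_{3}=p\,y_{3}-1$, while the anticlockwise ordering (angle strictly between that of $\tbinom{p}{1}$ and $180^{\circ }$) gives $y_{3}\geq 1$. The left-turn inequality at $\mathbf{v}_{2}$, namely $\det(\mathbf{v}_{2}-\mathbf{v}_{1},\mathbf{v}_{3}-\mathbf{v}_{2})>0$, then reduces to $(p+1)(y_{3}-1)<2$, which for $p\geq 1$ admits only $y_{3}=1$; hence $\mathbf{v}_{3}=\tbinom{p-1}{1}$. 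A second left-turn inequality, now at $\mathbf{v}_{3}$, collapses to $1-y_{4}>0$, i.e. $y_{4}\leq 0$ for any further vertex $\mathbf{v}_{4}$, whereas the anticlockwise position strictly between $\mathbf{v}_{3}$ (in the upper half-plane) and $\mathbf{v}_{\mu }$ forces $y_{4}>0$; this contradiction shows the next vertex is already $\mathbf{v}_{\mu }$. Thus $\mu \in \{3,4\}$, with unique possible intermediate vertex $\tbinom{p-1}{1}$.

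Symmetrically, on the second arc a vertex $\mathbf{v}_{\mu +1}$ between $\mathbf{v}_{\mu }$ and $\mathbf{v}_{1}$ satisfies $\det(\mathbf{v}_{\mu },\mathbf{v}_{\mu +1})=1$, whence $y_{\mu +1}=-1$, so $\mathbf{v}_{\mu +1}=\tbinom{s}{-1}$ lies below the $x$-axis. Assuming a further vertex $\mathbf{v}_{\mu +2}$ and using $x_{\mu +2}=1-s\,y_{\mu +2}$, the left-turn inequality at $\mathbf{v}_{\mu +1}$ simplifies to $y_{\mu +2}+2>0$, i.e. $y_{\mu +2}\geq -1$; the value $-1$ is excluded by Lemma \ref{NCOLVERT} (it would place $\mathbf{v}_{\mu +1},\mathbf{v}_{\mu +2},\mathbf{v}_{1}$ on the line $y=-1$), so $y_{\mu +2}\geq 0$, again contradicting the anticlockwise position strictly between $\mathbf{v}_{\mu +1}$ and $\mathbf{v}_{1}$. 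Hence this arc carries at most one intermediate vertex, and if it carries exactly one, basicness of $\sigma _{\nu }$, i.e. $\det(\mathbf{v}_{\mu +1},\mathbf{v}_{1})=1$, forces $s=0$ and $\mathbf{v}_{\mu +1}=\tbinom{0}{-1}$. Combining the two arcs gives $\nu \in \{3,4,5\}$, so $k=\nu -2\in \{1,2,3\}$, and reading off the vertex sets yields $Q_{p}^{[1]}$ ($k=1$), $Q_{p}^{[3]}$ ($k=3$), and for $k=2$ exactly the two possibilities $Q_{p}^{[2]}$ (intermediate vertex on the first arc) and $\check{Q}_{p}^{[2]}$ (intermediate vertex on the second arc); this settles (i).

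For (ii) I would exhibit the involution $\Xi :=\left( \begin{smallmatrix} 1 & 1-p \\ 0 & -1 \end{smallmatrix}\right) \in \mathrm{GL}_{2}(\mathbb{Z})$ (with $\det \Xi =-1$), which encodes the symmetry responsible for $\widehat{p}=p$ in Lemma \ref{BOTHBASIC}. A direct check gives $\Xi \tbinom{1}{-1}=\tbinom{p}{1}$, $\Xi \tbinom{p}{1}=\tbinom{1}{-1}$, $\Xi \tbinom{-1}{0}=\tbinom{-1}{0}$ and $\Xi \tbinom{0}{-1}=\tbinom{p-1}{1}$, so $\Xi (\check{Q}_{p}^{[2]})=Q_{p}^{[2]}$ and the two polygons are lattice-equivalent. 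The main obstacle is the arc-by-arc bookkeeping in (i): making the left-turn and angular-ordering arguments uniformly valid for all $p\geq 1$ (the borderline being $p=1$, where $\tbinom{p-1}{1}=\tbinom{0}{1}$), and cleanly separating the genuine-intermediate-vertex cases from the degenerate ones in which the next vertex is already $\mathbf{v}_{\mu }$ or $\mathbf{v}_{1}$.
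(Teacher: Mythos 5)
Your proof is correct, and it shares the paper's skeleton: anchor the polygon at the three normalized vertices $\mathbf{v}_{1}=\tbinom{1}{-1}$, $\mathbf{v}_{2}=\tbinom{p}{1}$, $\mathbf{v}_{\mu}=\tbinom{-1}{0}$, show that each of the two arcs carries at most one further vertex, forced to be $\tbinom{p-1}{1}$ resp.\ $\tbinom{0}{-1}$, enumerate the four resulting polygons, and for (ii) exhibit the matrix $\left(\begin{smallmatrix}1 & 1-p\\ 0 & -1\end{smallmatrix}\right)$ --- this is exactly the paper's $\mathfrak{Y}$, and since it is an involution your statement $\Xi(\check{Q}_{p}^{[2]})=Q_{p}^{[2]}$ and the paper's $\mathfrak{Y}(Q_{p}^{[2]})=\check{Q}_{p}^{[2]}$ coincide. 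Where you genuinely diverge is the engine used to exclude extra vertices. The paper first confines \emph{all} remaining vertices simultaneously to the two half-lines $\{y=1,\ x\leq p-1\}$ and $\{y=-1,\ x\leq 0\}$, by showing that a vertex in the regions $\mathcal{U}_{1}^{\prime},\mathcal{U}_{2}^{\prime}$ (i.e.\ with $|y|\geq 2$ in the relevant angular sectors) would force $\left\vert\det(\mathbf{v}_{i},\mathbf{v}_{i+1})\right\vert\geq 2$ for some basic cone, and only then invokes Lemma \ref{NCOLVERT} to cap each line at one extra vertex; the exact coordinates $\tbinom{p-1}{1}$ and $\tbinom{0}{-1}$ are left implicit there (they again come from basicness of the adjacent cones). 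You instead proceed vertex-by-vertex: the unimodularity relation $\det(\mathbf{v}_{i},\mathbf{v}_{i+1})=1$ parametrizes each next vertex by one free integer, which you then kill using strict convexity (left-turn inequalities on edge vectors, e.g.\ $(p+1)(y_{3}-1)<2$ and $y_{\mu+2}+2>0$) together with the angular ordering, calling on Lemma \ref{NCOLVERT} only once, to rule out $y_{\mu+2}=-1$. I verified all your determinant computations; they are right, and the inequalities hold uniformly for $p\geq 1$ (including $p=1$), so the ``obstacle'' you flag at the end is not actually an issue. The trade-off: your sequential argument is more computational but pins down every coordinate explicitly and makes the basicness step $x_{3}=p-1$, $s=0$ visible, whereas the paper's region-exclusion disposes of arbitrarily many putative vertices in one sweep at the cost of a terser, more implicit finish.
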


\begin{proof}
(i) If $\mathcal{U}_{1}^{\prime }:=\left\{ \tbinom{x}{y}\in \Psi _{2}(%
\mathcal{U}_{1})\left\vert y\leq -2\right. \right\} ,$ we claim that $%
\mathcal{U}_{1}^{\prime }\cap \mathcal{V}(\Upsilon \left( Q\right)
)=\varnothing .$ If $\mathbf{v}_{\mu +1}\in \mathcal{U}_{1}^{\prime }\cap 
\mathcal{V}(\Upsilon \left( Q\right) ),$ then we would have $\left\vert \det
(\mathbf{v}_{\mu },\mathbf{v}_{\mu +1})\right\vert \geq 2,$ contradicting to
the basicness of the cone $\Upsilon \left( \sigma _{\mu }\right) .$ If%
\begin{equation*}
\ \text{ }\mathbf{v}_{\mu +1}\in \left\{ \tbinom{x}{y}\in \mathbb{Z}%
^{2}\left\vert x\leq 0\text{, }y=-1\right. \right\} \ \text{and}\ \ \mathbf{v%
}_{\mu +2}\in \mathcal{U}_{1}^{\prime }\cap \mathcal{V}(\Upsilon \left(
Q\right) ),
\end{equation*}%
then we would again have $\left\vert \det (\mathbf{v}_{\mu +1},\mathbf{v}%
_{\mu +2})\right\vert \geq 2,$ contradicting to the basicness of the cone $%
\Upsilon \left( \sigma _{\mu +1}\right) .$ Repeating successively this
procedure (until we arrive at $\mathbf{v}_{\nu }$) we bear out our assertion$%
,$ as well as the implication 
\begin{equation*}
\mu \leq \nu -1\Rightarrow \left\{ \mathbf{v}_{\xi }\left\vert \mu +1\leq
\xi \leq \nu \right. \right\} \subset \left\{ \tbinom{x}{y}\in \mathbb{Z}%
^{2}\left\vert x\leq 0\text{, }y=-1\right. \right\} .
\end{equation*}%
Correspondingly, for $\mathcal{U}_{2}^{\prime }:=\left\{ \tbinom{x}{y}\in
\Psi _{2}(\mathcal{U}_{2})\left\vert y\geq 2\right. \right\} $ we show that $%
\mathcal{U}_{2}^{\prime }\cap \mathcal{V}(\Upsilon \left( Q\right)
)=\varnothing ,$ and%
\begin{equation*}
\mu \geq 4\Rightarrow \left\{ \mathbf{v}_{\xi }\left\vert 3\leq \xi \leq \mu
-1\right. \right\} \subset \left\{ \tbinom{x}{y}\in \mathbb{Z}^{2}\left\vert
x\leq p-1\text{, }y=1\right. \right\} .
\end{equation*}%
Hence, $\mathcal{V}(\Upsilon \left( Q\right) )\mathbb{r}\{\mathbf{v}_{1},%
\mathbf{v}_{2},\mathbf{v}_{\mu }\}$ is either empty or a subset of 
\begin{equation*}
\left\{ \tbinom{x}{y}\in \mathbb{Z}^{2}\left\vert x\leq 0\text{ and }%
y=-1\right. \right\} \cup \left\{ \tbinom{x}{y}\in \mathbb{Z}^{2}\left\vert
x\leq p-1\text{ and }y=1\right. \right\} .
\end{equation*}%
Taking into account Lemma \ref{NCOLVERT} we conclude that%
\begin{equation*}
\{\mathbf{v}_{1},\mathbf{v}_{2},\mathbf{v}_{\mu }\}\subseteq \mathcal{V}%
(\Upsilon \left( Q\right) )\subseteq \left\{ \mathbf{v}_{1},\mathbf{v}_{2},%
\tbinom{p-1}{1},\mathbf{v}_{\mu },\tbinom{0}{-1}\right\} .
\end{equation*}%
Therefore, $k\in \{1,2,3\}$ and there are only four possibilities:\smallskip 
\newline
$\bullet $ Case (a): If $k=1,$ then $\nu =\mu =3$ and $\Upsilon \left(
Q\right) =\text{conv}(\{\mathbf{v}_{1},\mathbf{v}_{2},\mathbf{v}%
_{3}\})=Q_{p}^{\left[ 1\right] }.$ $\smallskip $\newline
$\bullet $ Case (b): If $k=2,$ then $\nu =4$ and \textit{either} $\Upsilon
\left( Q\right) =Q_{p}^{\left[ 2\right] },$ $\mu =4,\,$\textit{or} $\Upsilon
\left( Q\right) =\check{Q}_{p}^{\left[ 2\right] },$ $\mu =3.\smallskip $ 
\newline
$\bullet $ Case (c): If $k=3,$ then $\nu =5,$ $\mu =4$ and $\Upsilon \left(
Q\right) =Q_{p}^{\left[ 3\right] }.\smallskip $\newline
(ii) $Q_{p}^{\left[ 2\right] }$ is mapped onto $\check{Q}_{p}^{\left[ 2%
\right] }$ under the unimodular transformation%
\begin{equation*}
\mathfrak{Y}:\mathbb{R}^{2}\longrightarrow \mathbb{R}^{2},\ \ \mathfrak{Y}(%
\mathbf{x}):=\left( 
\begin{array}{cc}
1 & 1-p \\ 
0 & -1%
\end{array}%
\right) \mathbf{x},\ \forall \mathbf{x}\in \mathbb{R}^{2},
\end{equation*}%
and $\mathfrak{Y}(\mathbf{v}_{1})=\mathbf{v}_{2},$ $\mathfrak{Y}(\mathbf{v}%
_{2})=\mathbf{v}_{1},\mathfrak{\ Y}\tbinom{-1}{0}=\tbinom{-1}{0},$ $%
\mathfrak{Y}\tbinom{p-1}{1}=\tbinom{0}{-1}.$
\end{proof}

\begin{note}
The set conv$(\{\mathbf{v}_{1},\mathbf{v}_{2}\})\cap\mathbb{Z}^{2}$ is empty
for $p$ even and consists of the single lattice point $\tbinom{\frac{1}{2}%
\smallskip(p+1)}{0}$ for $p$ odd. Thus, the number of the lattice points
belonging to the boundary of $Q_{p}^{\left[ k\right] },$ $k\in\{1,2,3\},$
equals $k+2$ whenever $p$ is even and $k+3$ whenever $p$ is odd. Since area$%
(Q_{p}^{\left[ k\right] })=\tfrac{p+k}{2}+1,$ Pick's formula gives%
\begin{equation*}
\sharp(\text{int}(Q_{p}^{\left[ k\right] })\cap\mathbb{Z}^{2})=\left\{ 
\begin{array}{ll}
\tfrac{p}{2}+1, & \text{if }p\text{ is even,} \\ 
\tfrac{p-1}{2}+1, & \text{if }p\text{ is odd.}%
\end{array}
\right.
\end{equation*}
\end{note}

\noindent{}\textit{Proof of Theorem }\ref{MAIN1}. (i)-(ii) Up to
isomorphism, every toric log del Pezzo surface with exactly one singularity
is of the form $X_{Q}$ with $Q$ as above. By (\ref{Picardnr}), Lemma \ref%
{KEY-LEMMA} and Proposition \ref{ISOLDP} we infer that the Picard number $%
\rho\left( X_{Q}\right) $ of $X_{Q}$ can take only the values $1,2$ and $3,$
and that for $k\in\{1,2,3\},$%
\begin{equation*}
\rho\left( X_{Q}\right) =k\Longleftrightarrow\exists p\in\mathbb{Z}%
_{>0}:X_{Q}\cong X_{Q_{p}^{\left[ k\right] }}.
\end{equation*}
(Note that for $k=2,$ $\mathfrak{Y}$ induces a graph-theoretic isomorphism $%
\mathfrak{G}_{\Delta_{\check{Q}_{p}^{\left[ 2\right] }}}\overset{\text{gr.}}{%
\cong}\mathfrak{G}_{\Delta_{Q_{p}^{\left[ 2\right] }}}^{\text{rev}},$
meaning that $X_{Q_{p}^{\left[ 2\right] }}\cong X_{\check{Q}_{p}^{\left[ 2%
\right] }}.$) The fan $\widetilde{\Delta}_{Q_{p}^{\left[ k\right] }}$ which
is used to construct the minimal desingularization of $X_{Q_{p}^{\left[ k%
\right] }}$ (as explained in \S \ref{COMTORS}) contains just one additional
ray (compared with $\Delta_{Q_{p}^{\left[ k\right] }}$), namely $\mathbb{R}%
_{\geq0}\tbinom {1}{0}.$ The closure of its orbit constitutes the single
exceptional divisor, say $E,$ w.r.t. this desingularization, with $%
E^{2}=-(p+1).$ Setting $\mathbf{u}_{E}:=\tbinom{1}{0}$ we compute the
integers $r_{i},$ $i\in\{1,\ldots,k+2\},\,\ $(defined in (\ref{CONDri})) in
the three different cases:\smallskip\ \newline
$\bullet$ Case (a): If $k=1,$ then $\mathbf{v}_{1}=\tbinom{1}{-1},$ $\mathbf{%
v}_{2}=\tbinom{p}{1},\mathbf{v}_{3}=\tbinom {-1}{0},$ and%
\begin{equation*}
[\mathbf{v}_{3}+\mathbf{u}_{E}=\mathbf{0}, \ \mathbf{v}_{2}+\mathbf{v}%
_{1}=-(p+1)\mathbf{v}_{3}] \Rightarrow r_{1}=r_{2}=0,\text{ }r_{3}=-(p+1).
\end{equation*}
\newline
$\bullet$ Case (b): If $k=2,$ then $\mathbf{v}_{1}=\tbinom{1}{-1},$ $\mathbf{%
v}_{2}=\tbinom{p}{1},$ $\mathbf{v}_{3}=\tbinom{p-1}{1},$ $\mathbf{v}_{4}=%
\tbinom{-1}{0},$ and%
\begin{equation*}
\left. 
\begin{array}{c}
\mathbf{v}_{4}+\mathbf{u}_{E}=\mathbf{0},\mathbf{u}_{E}+\mathbf{v}_{3}=%
\mathbf{v}_{2} \\ 
\mathbf{v}_{2}+\mathbf{v}_{4}=\mathbf{v}_{3},\mathbf{v}_{3}+\mathbf{v}_{1}=-p%
\mathbf{v}_{4}%
\end{array}
\right\} \Rightarrow r_{1}=0,\text{ }r_{2}=r_{3}=1,\text{ }r_{4}=-p.
\end{equation*}
$\bullet$ Case (c): If $k=3,$ then $\mathbf{v}_{1}=\tbinom{1}{-1},$ $\mathbf{%
v}_{2}=\tbinom{p}{1},$ $\mathbf{v}_{3}=\tbinom{p-1}{1},$ $\mathbf{v}_{4}=%
\tbinom{-1}{0},$ $\mathbf{v}_{5}=\tbinom{0}{-1},$ and 
\begin{equation*}
\left. 
\begin{array}{r}
\mathbf{v}_{5}+\mathbf{u}_{E}=\mathbf{v}_{1},\mathbf{u}_{E}+\mathbf{v}_{3}=%
\mathbf{v}_{2}, \\ 
\mathbf{v}_{2}+\mathbf{v}_{4}=\mathbf{v}_{3},\mathbf{v}_{3}+\mathbf{v}%
_{5}=-(p+1)\mathbf{v}_{4}, \\ 
\mathbf{v}_{4}+\mathbf{v}_{1}=\mathbf{v}_{5}%
\end{array}
\right\} \Rightarrow r_{1}=r_{2}=\text{ }r_{3}=r_{5}=1,\text{ }r_{4}=-(p-1).
\end{equation*}
Hence, the \textsc{wve}$^{2}$\textsc{c}-graphs $\mathfrak{G}_{\Delta
_{Q_{p}^{\left[ k\right] }}}$ are indeed those depicted in Figure \ref{Fig.2}%
.\smallskip

\noindent{}(iii) Defining for every positive integer $p$ the complete fan 
\begin{equation*}
\mathfrak{D}_{p}:=\left\{ 
\begin{array}{c}
\text{ the cones }\mathbb{R}_{\geq0}\tbinom{1}{-1}+\mathbb{R}_{\geq0}\tbinom{%
1}{0},\text{ }\mathbb{R}_{\geq0}\tbinom{1}{0}+\mathbb{R}_{\geq 0}\tbinom{p}{1%
},\medskip\text{ } \\ 
\mathbb{R}_{\geq0}\tbinom{p}{1}+\mathbb{R}_{\geq0}\tbinom{-1}{0},\text{ and }%
\mathbb{R}_{\geq0}\tbinom{-1}{0}+\mathbb{R}_{\geq0}\tbinom{1}{-1},\medskip
\\ 
\text{together with their faces}%
\end{array}
\right\} ,
\end{equation*}
we see that $X_{\mathfrak{D}_{p}}\cong\mathbb{F}_{p+1},$ having $\overline {%
\text{orb}_{\mathfrak{D}_{p}}(\mathbb{R}_{\geq0}\tbinom{1}{0})}$ as its $%
\infty$-section. The surfaces $X_{Q_{p}^{\left[ k\right] }}$ are
characterized as follows:\smallskip\ 

\noindent{}$\bullet$ Case (a): If $k=1,$ then $X_{Q_{p}^{\left[ 1\right]
}}\cong$ $\mathbb{P}_{\mathbb{C}}^{2}(1,1,p+1)$ (see \cite[Lemma 6.1]{Dais2}%
), and it is obtained by contracting the $\infty$-section of $X_{\mathfrak{D}%
_{p}}.$ In fact, since $X_{\mathfrak{D}_{p}}=X_{\widetilde{\Delta}_{Q_{p}^{%
\left[ 1\right] }}}$ is the minimal desingularization of $X_{Q_{p}^{\left[ 1%
\right] }},$ the surface $X_{Q_{p}^{\left[ 1\right] }}$ is nothing but the 
\textit{anticanonical model} of $X_{\mathfrak{D}_{p}}$ (in the sense of
Sakai \cite{Sakai}).\smallskip\ \newline
$\bullet$ Case (b): If $k=2,$ the star subdivision of $\mathfrak{D}_{p-1}$
w.r.t the cone $\mathbb{R}_{\geq0}\tbinom{1}{0}+\mathbb{R}_{\geq0}\tbinom{p-1%
}{1}$ induces the equivariant blow-up $X_{\widetilde{\Delta}_{Q_{p}^{\left[ 2%
\right] }}}\longrightarrow$ $X_{\mathfrak{D}_{p-1}}$ with the orbit of this
cone as centre (cf. \cite[Proposition 3.3.15, p. 130]{CLS}, \cite[Corollary
7.5, p. 45]{Oda-Miyake} or \cite[Theorem VI.7.2, pp. 249-250]{Ewald}). Thus,
the surface $X_{Q_{p}^{\left[ 2\right] }}$ is obtained by contracting the
strict transform of the $\infty$-section of $X_{\mathfrak{D}_{p-1}}$ on $X_{%
\widetilde{\Delta}_{Q_{p}^{\left[ 2\right] }}}.$\smallskip\ 

\noindent{}$\bullet$ Case (c): If $k=3,$ we construct the surface $X_{Q_{p}^{%
\left[ 3\right] }}$ from $X_{Q_{p}^{\left[ 2\right] }}$ by using the
equivariant birational morphism induced by the star subdivision of $%
\mathfrak{D}_{p-1}$ w.r.t the cone $\mathbb{R}_{\geq0}\tbinom{-1}{0}+\mathbb{%
R}_{\geq0}\tbinom{1}{-1}$, i.e., by blowing up its orbit (which is a
non-singular $\mathbb{T}$-fixed point of $X_{Q_{p}^{\left[ 2\right] }}$)$.$

Taking into account that we pass from $X_{\mathfrak{D}_{p-1}}$ to $X_{%
\mathfrak{D}_{p}}$ (and vice versa) by an elementary transformation (cf. 
\cite[Remark 6.3, pp. 105-106]{Dais1}), we illustrate in Figure \ref{Fig.4}
how the equivariant birational morphisms connecting all the above mentioned
compact toric surfaces affect their \textsc{wve}$^{2}$\textsc{c}-graphs.

\begin{figure}[ht]
\includegraphics[height=11cm, width=11.5cm]{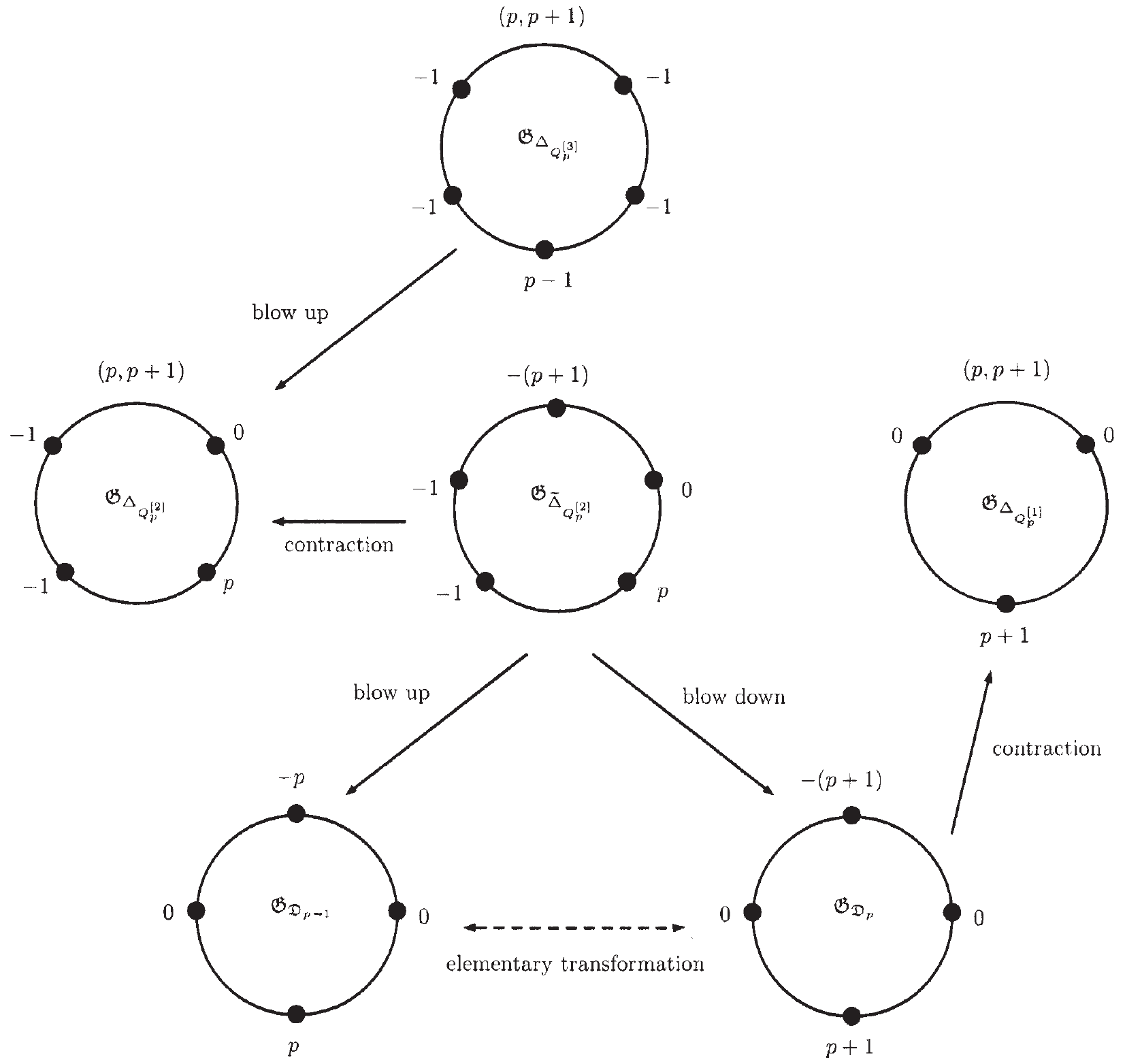}
\caption{}
\label{Fig.4}
\end{figure}

\noindent{}(iv) Since $q=p+1$ and gcd$(p+1,p-1)=$ gcd$(p+1,2)\in\{1,2\},$
formula (\ref{INDEXFORMEL}) shows that the index $\ell$ of $X_{Q}\cong
X_{Q_{p}^{\left[ k\right] }}$ equals $\frac{p+1}{2}$ whenever $p$ is odd and 
$p+1$ whenever $p$ is even. This bears out our assertion about $\ell$.\hfill
{}$\square$

\begin{remark}
Among the LDP-polygons $Q_{p}^{\left[ k\right] },$ only $Q_{1}^{\left[ 1%
\right] },Q_{1}^{\left[ 2\right] },Q_{1}^{\left[ 3\right] }$ are \textit{%
reflexive} (with index $\ell=1$ and a unique \textit{Gorenstein}
singularity).
\end{remark}

\section{Defining equations\label{PROOFOFMAIN2}}

\noindent{}Let $Q$ be an arbitrary LDP-polygon. Since the Cartier divisor $%
-\ell K_{X_{Q}}$ on $X_{Q}$ is very ample, setting 
\begin{equation*}
\delta_{Q}:=\sharp((\ell\mathring{Q})\cap\mathbb{Z}^{2})-1,
\end{equation*}
the complete linear system $\left\vert -\ell K_{X_{Q}}\right\vert $ induces
the closed embedding $\Phi_{\left\vert -\ell K_{X_{Q}}\right\vert }$,

\begin{equation*}
\xymatrix{\mathbb{T} \hspace{0.2cm} \ar@{^{(}->}[rr]^{\iota}
\ar@/_1.7pc/[rrrr] & & X_{Q} \hspace{0.2cm}
\ar@{^{(}->}[rr]^{\Phi_{\left\vert -\ell K_{X_{Q}}\right\vert }} & &
\mathbb{P}_{\mathbb{C}}^{\delta_{Q}}}
\end{equation*}
with 
\begin{equation*}
\mathbb{T}\ni t\longmapsto(\Phi_{\left\vert -\ell K_{X_{Q}}\right\vert
}\circ\iota)(t):=[...:z_{(i,j)}:...]_{(i,j)\in(\ell\mathring{Q})\cap \mathbb{%
Z}^{2}}\in\mathbb{P}_{\mathbb{C}}^{\delta_{Q}},\ z_{(i,j)}:=\chi^{(i,j)}(t),
\end{equation*}
where $\chi^{(i,j)}:\mathbb{T}\rightarrow\mathbb{C}^{\ast}$ is the character
associated to the lattice point $(i,j)$ (with $\mathbb{T}$ denoting the
algebraic torus Hom$_{\mathbb{Z}}(\mathbb{Z}^{2},\mathbb{C}^{\ast})$), for
all $(i,j)\in(\ell\mathring{Q})\cap\mathbb{Z}^{2}.$ The image $%
\Phi_{\left\vert -\ell K_{X_{Q}}\right\vert }(X_{Q})$ of $X_{Q}$ under $%
\Phi_{\left\vert -\ell K_{X_{Q}}\right\vert }$ is the Zariski closure of Im$%
(\Phi_{\left\vert -\ell K_{X_{Q}}\right\vert }\circ\iota)$ in $\mathbb{P}_{%
\mathbb{C}}^{\delta_{Q}}$ and can be viewed as the projective variety Proj$%
(S_{\ell\mathring{Q}}),$ where 
\begin{equation*}
S_{\ell\mathring{Q}}:=\mathbb{C}[C(\ell\mathring{Q})\cap\mathbb{Z}^{3}]={%
\displaystyle\bigoplus\limits_{\kappa=0}^{\infty}} \left( {\displaystyle%
\bigoplus\limits_{(i,j)\in(\kappa(\ell\mathring{Q}))\cap\mathbb{Z}^{2}}} 
\mathbb{C\cdot}\chi^{(i,j)}s^{\kappa}\right) \ 
\end{equation*}
$($with $C(\ell\mathring{Q}):=\{(\lambda y_{1},\lambda y_{2},\lambda
)\left\vert \lambda\in\mathbb{R}_{\geq0}\text{ and }(y_{1},y_{2})\in \ell%
\mathring{Q}\right. \})$ is the semigroup algebra which is naturally graded
by setting deg$(\chi^{(i,j)}s^{\kappa}):=\kappa.$ (For a detailed exposition
see \cite[Theorem 2.3.1, p. 75; Proposition 5.4.7, pp. 237-238; Theorem
5.4.8, pp. 239-240, and Theorem 7.1.13, pp. 325-326]{CLS}.) Equivalently, it
can be viewed as the zero set $\mathbb{V}(I_{\mathcal{A}_{Q}})\subset\mathbb{%
P}_{\mathbb{C}}^{\delta_{Q}}$ of the homogeneous ideal $I_{\mathcal{A}%
_{Q}}:= $ Ker$(\pi_{Q}),$ where%
\begin{equation*}
\mathcal{A}_{Q}:=\left\{ (i,j,1)\left\vert (i,j)\in(\ell\mathring{Q})\cap%
\mathbb{Z}^{2}\right. \right\} \subset\mathbb{Z}^{2}\times \{1\}\subset%
\mathbb{Z}^{3},
\end{equation*}
and $\pi_{Q}$ is the $\mathbb{C}$-algebra homomorphism 
\begin{equation*}
\mathbb{C}[...:z_{(i,j)}:....]_{(i,j)\in(\ell\mathring{Q})\cap\mathbb{Z}^{2}}%
\overset{\pi_{Q}}{\longrightarrow}\mathbb{C}[...,%
\chi^{(i,j,1)},....]_{(i,j,1)\in\mathcal{A}_{Q}},\ \
z_{(i,j)}\longmapsto\chi^{(i,j,1)}.
\end{equation*}
Furthermore, the \textit{projective degree} $d_{Q}:=$ deg$(\mathbb{V}(I_{%
\mathcal{A}_{Q}}))$ of $\mathbb{V}(I_{\mathcal{A}_{Q}})$ (i.e., the double
of the leading coefficient of the Hilbert polynomial of the homogeneous
coordinate ring $\mathbb{C}[...:z_{(i,j)}:....]_{(i,j)\in(\ell\mathring {Q}%
)\cap\mathbb{Z}^{2}}/I_{\mathcal{A}_{Q}}$) equals%
\begin{equation}
d_{Q}=2\,\text{area}(\ell\mathring{Q}).  \label{PROJDEGREE}
\end{equation}
(See Sturmfels \cite[Theorem 4.16, pp. 36-37, and p. 131]{Sturmfels} and 
\cite[Proposition 9.4.3, pp. 432-433]{CLS}.)

\begin{theorem}[Koelman \protect\cite{Koelman}]
\label{KOELMANTHM}If $\sharp(\partial(\ell\mathring {Q})\cap\mathbb{Z}%
^{2})\geq4,$ then $I_{\mathcal{A}_{Q}}$ is generated by all possible
quadratic binomials, i.e.,{\small 
\begin{equation*}
I_{\mathcal{A}_{Q}}=\left\langle \left\{
z_{(i_{1},j_{1})}z_{(i_{2},j_{2})}-z_{(i_{1}^{\prime},j_{1}^{%
\prime})}z_{(i_{2}^{\prime},j_{2}^{\prime})}\left\vert 
\begin{array}{c}
(i_{1},j_{1}),(i_{2},j_{2}),(i_{1}^{\prime},j_{1}^{\prime}),(i_{2}^{\prime
},j_{2}^{\prime})\in(\ell\mathring{Q})\cap\mathbb{Z}^{2},\medskip \\ 
\text{\emph{with} }(i_{1},j_{1})+(i_{2},j_{2})=(i_{1}^{\prime},j_{1}^{\prime
})+(i_{2}^{\prime},j_{2}^{\prime})%
\end{array}
\right. \right\} \right\rangle .
\end{equation*}
}
\end{theorem}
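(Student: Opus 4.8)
The plan is to reduce the assertion to a combinatorial connectivity statement about lattice-point decompositions and then to settle that statement by induction on the degree. Write $P:=\ell\mathring{Q}$, so that $\mathcal{A}_Q=(P\cap\mathbb{Z}^2)\times\{1\}$. Recall first that the toric ideal $I_{\mathcal{A}_Q}=\mathrm{Ker}(\pi_Q)$ is generated by the binomials $z^{\mathbf u}-z^{\mathbf v}$ with $\mathbf u,\mathbf v\in\mathbb{Z}_{\geq0}^{\mathcal{A}_Q}$ satisfying $\sum_a u_a\,a=\sum_a v_a\,a$ in $\mathbb{Z}^3$ (see Sturmfels \cite{Sturmfels}). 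Since every $a\in\mathcal{A}_Q$ has last coordinate $1$, this relation forces $\sum_a u_a=\sum_a v_a=:k$ together with $\sum_a u_a\bar a=\sum_a v_a\bar a$, where $\bar a\in\mathbb{Z}^2$ denotes the projection of $a$; hence $I_{\mathcal{A}_Q}$ is homogeneous and a degree-$k$ generator corresponds to a pair of length-$k$ multisets $\{a_1,\dots,a_k\}$ and $\{b_1,\dots,b_k\}$ of points of $P\cap\mathbb{Z}^2$ with $a_1+\cdots+a_k=b_1+\cdots+b_k$. Writing $J$ for the subideal generated by the quadratic binomials, the claim $J=I_{\mathcal{A}_Q}$ is equivalent, by the standard fibre-graph criterion for toric ideals, to the following: for each $s\in\mathbb{Z}^2$ and each $k$, any two length-$k$ decompositions of $s$ into lattice points of $P$ are joined by a finite chain of \emph{elementary moves}, each replacing a pair $\{a_i,a_j\}$ by a pair $\{c,d\}\subset P\cap\mathbb{Z}^2$ with $c+d=a_i+a_j$.

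Next I would establish this connectivity by induction on $k$; for $k=2$ the two decompositions already differ by a single elementary move. For $k\geq3$, fix decompositions $\mathbf a=\{a_1,\dots,a_k\}$ and $\mathbf b=\{b_1,\dots,b_k\}$ of $s$ and aim to transform $\mathbf a$, by elementary moves, so that it acquires the point $b_k$; deleting the common point $b_k$ then leaves two length-$(k-1)$ decompositions of $s-b_k$, to which the inductive hypothesis applies. To produce $b_k$ I would look for a pair $\{a_i,a_j\}$ and the rewriting $a_i+a_j=b_k+(a_i+a_j-b_k)$ with both summands in $P\cap\mathbb{Z}^2$. If no single move reaches $b_k$, I would instead move some $a_i$ towards $b_k$ in small steps, rebalancing collinear pairs $\{u,v\}\mapsto\{u',v'\}$ with $u'+v'=u+v$ along lattice segments exactly as for the rational normal curve, while reducing the distance from the point of $\mathbf a$ nearest to $b_k$ until $b_k$ itself is realized.

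The principal difficulty --- and the precise point at which the hypothesis $\sharp(\partial(\ell\mathring{Q})\cap\mathbb{Z}^2)\geq4$ enters --- is to guarantee that the intermediate lattice point $a_i+a_j-b_k$ can always be chosen inside $P$, so that the elementary moves never get stuck. If the boundary carried only two or three lattice points, $P$ would be so thin (a unimodular triangle, or a comparably lattice-poor polygon) that some fibres become rigid and genuinely fail to be connected in degree $2$; thus the hypothesis is not merely technical. I would therefore isolate the extremal configurations, namely decompositions whose points cluster along a single edge of $P$, and use the additional boundary lattice point supplied by the hypothesis to produce a vertex off that edge; pairing it with a clustered point yields a quadratic exchange that breaks the near-collinearity, and combined with the one-dimensional rebalancing along edges this establishes connectivity of every fibre, whence $J=I_{\mathcal{A}_Q}$. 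Verifying this last geometric step --- that the extra boundary point always provides a usable partner for the exchange, uniformly over all fibres --- is where I expect the real work to lie.
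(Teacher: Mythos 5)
The first thing to observe is that the paper itself contains no proof of this statement: it is imported verbatim as Koelman's theorem and attributed to \cite{Koelman}, so your attempt can only be measured against the literature, not against an internal argument. Within that frame, your reduction is correct and standard: since every element of $\mathcal{A}_{Q}$ has last coordinate $1$, the toric ideal $I_{\mathcal{A}_{Q}}$ is homogeneous and is spanned by binomials coming from pairs of equal-length, equal-sum multisets of lattice points of $P=\ell\mathring{Q}$ (this is \cite[Lemma 4.1]{Sturmfels}), and generation by quadrics is equivalent to connectivity of every fibre under the exchange moves $\{a_{i},a_{j}\}\mapsto\{c,d\}$, $c+d=a_{i}+a_{j}$. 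The induction skeleton (make $b_{k}$ appear in $\mathbf{a}$ by moves, delete it, induct on $k$) is also the natural one, and your sharpness remark is essentially right: for the triangle with vertices $(0,0)$, $(2,1)$, $(1,2)$, whose boundary carries exactly three lattice points, the degree-$3$ fibre containing three copies of the interior point $(1,1)$ and the triple of vertices admits no quadratic move at all, which is why the cubic $z_{1}z_{2}z_{3}-w^{3}$ is an unavoidable generator.

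However, the proposal has a genuine gap, and it sits exactly where the theorem does. Everything you write up to the final paragraph is formal bookkeeping that would apply to any lattice polygon; the entire content of Koelman's theorem is the claim you defer, namely that when $\sharp(\partial P\cap\mathbb{Z}^{2})\geq 4$ the exchange moves never get stuck --- that one can always realize $b_{k}$ even though the naive intermediate point $a_{i}+a_{j}-b_{k}$ may lie outside $P$. Your fallback ("move $a_{i}$ towards $b_{k}$ in small steps, rebalancing collinear pairs") is a heuristic, not an argument: near a vertex of $P$, or when the points of a fibre cluster on a single edge, the available lattice points are scarce, and showing that a fourth boundary point always supplies a usable exchange partner, uniformly over all fibres and all degrees, is precisely the case analysis that occupies Koelman's paper. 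You acknowledge this openly ("where I expect the real work to lie"), which is honest, but it means the proposal stops at the point where the proof would have to begin; as it stands it is a plausible programme with a correct framework, not a proof.
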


\begin{corollary}[{Castryck \& Cools \protect\cite[\S 2]{CC}}]
\label{CCCOR}If $\sharp(\partial (\ell\mathring{Q})\cap\mathbb{Z}^{2})\geq4,$
and if we denote by $\beta_{Q}$ the cardinality of any minimal system of
quadrics generating the ideal $I_{\mathcal{A}_{Q}},$ then%
\begin{equation}
\beta_{Q}=\tbinom{\delta_{Q}+2}{2}-\sharp(2(\ell\mathring{Q})\cap \mathbb{Z}%
^{2}).  \label{BETAQFORMULA}
\end{equation}
\end{corollary}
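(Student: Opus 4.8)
The plan is to leverage Koelman's Theorem~\ref{KOELMANTHM}, whose hypothesis $\sharp(\partial(\ell\mathring{Q})\cap\mathbb{Z}^{2})\geq 4$ is exactly that of the corollary, in order to reduce the counting of a minimal generating system to one dimension count in the single graded piece of degree $2$. The guiding observation is that once an ideal is known to be generated in one fixed degree, its minimal number of generators is forced to equal the vector-space dimension of that graded piece.

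First I would set $R:=\mathbb{C}[\dots:z_{(i,j)}:\dots]_{(i,j)\in(\ell\mathring{Q})\cap\mathbb{Z}^{2}}$ and record that $I_{\mathcal{A}_{Q}}=\ker(\pi_{Q})$ is a homogeneous ideal with $(I_{\mathcal{A}_{Q}})_{0}=(I_{\mathcal{A}_{Q}})_{1}=0$; indeed the characters $\chi^{(i,j,1)}$, $(i,j)\in(\ell\mathring{Q})\cap\mathbb{Z}^{2}$, attached to distinct lattice points are pairwise distinct, hence $\mathbb{C}$-linearly independent, so $\pi_{Q}$ is injective in degrees $0$ and $1$. By Theorem~\ref{KOELMANTHM} the ideal is generated by quadratic binomials, so $I_{\mathcal{A}_{Q}}=R_{+}\cdot(I_{\mathcal{A}_{Q}})_{2}$. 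Writing $\mathfrak{m}:=R_{+}$ for the irrelevant maximal ideal, the graded Nakayama lemma identifies $\beta_{Q}$ with $\dim_{\mathbb{C}}\bigl((I_{\mathcal{A}_{Q}})_{2}/(\mathfrak{m}\,I_{\mathcal{A}_{Q}})_{2}\bigr)$; since $(\mathfrak{m}\,I_{\mathcal{A}_{Q}})_{2}=R_{1}(I_{\mathcal{A}_{Q}})_{1}=0$ by the vanishing just noted, and since no minimal generators live in degree $\geq 3$, I obtain simply $\beta_{Q}=\dim_{\mathbb{C}}(I_{\mathcal{A}_{Q}})_{2}$.

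Next I would evaluate $\dim_{\mathbb{C}}(I_{\mathcal{A}_{Q}})_{2}$ by applying rank--nullity to the restriction of $\pi_{Q}$ to $R_{2}$. The source $R_{2}$ is the space of degree-$2$ monomials in the $\delta_{Q}+1$ variables $z_{(i,j)}$, of dimension $\tbinom{\delta_{Q}+2}{2}$. The image is spanned by the products $\chi^{(i,j,1)}\chi^{(i',j',1)}=\chi^{(i+i',\,j+j',\,2)}$, so its dimension equals the cardinality of the sumset $S+S$, where $S:=(\ell\mathring{Q})\cap\mathbb{Z}^{2}$. Hence $\dim_{\mathbb{C}}(I_{\mathcal{A}_{Q}})_{2}=\tbinom{\delta_{Q}+2}{2}-\sharp(S+S)$, and it only remains to identify $\sharp(S+S)$ with $\sharp(2(\ell\mathring{Q})\cap\mathbb{Z}^{2})$.

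The one genuinely geometric ingredient, which I expect to be the crux, is the equality $S+S=2(\ell\mathring{Q})\cap\mathbb{Z}^{2}$. The inclusion $\subseteq$ is immediate, since the sum of two lattice points of $\ell\mathring{Q}$ is a lattice point of $2(\ell\mathring{Q})$. For the reverse inclusion I would invoke normality (the integer decomposition property) of the \emph{lattice} polygon $\ell\mathring{Q}$ --- note that $\ell\mathring{Q}$ has integral vertices precisely by the definition of the index $\ell$ as $\min\{\kappa\in\mathbb{Z}_{>0}\mid\mathcal{V}(\kappa\mathring{Q})\subset\mathbb{Z}^{2}\}$ --- which guarantees that every lattice point of $2(\ell\mathring{Q})$ decomposes as a sum of two lattice points of $\ell\mathring{Q}$. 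This property holds for every two-dimensional lattice polytope (for instance because such a polytope admits a unimodular lattice triangulation). Substituting $\sharp(S+S)=\sharp(2(\ell\mathring{Q})\cap\mathbb{Z}^{2})$ into the previous display then yields formula~\eqref{BETAQFORMULA}.
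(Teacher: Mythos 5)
Your proposal is correct and takes essentially the same route as the paper: both arguments reduce $\beta_{Q}$ to $\dim_{\mathbb{C}}\bigl((I_{\mathcal{A}_{Q}})_{2}\bigr)$ (you via graded Nakayama and the vanishing of the ideal in degrees $0,1$; the paper via Koelman's Theorem together with the fact that $\mathbb{V}(I_{\mathcal{A}_{Q}})$ lies in no hyperplane) and then compute that dimension by rank--nullity for the degree-$2$ restriction of $\pi_{Q}$, identifying the image with the span of the monomials indexed by $2(\ell\mathring{Q})\cap\mathbb{Z}^{2}$ through the integer decomposition property of the lattice polygon $\ell\mathring{Q}$ (the paper cites this as \cite[Theorem 2.2.12]{CLS}, you invoke normality of two-dimensional lattice polytopes). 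The only cosmetic difference is that you make the Nakayama step explicit where the paper handles it by citation, so the two proofs are in substance identical.
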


\begin{proof}
If HP$_{2}(\mathbb{P}_{\mathbb{C}}^{\delta_{Q}}):=\left\{ \text{homogeneous
polynomials (in }\delta_{Q}+1\text{ variables) of degree }2\right\} ,$ then
the $\mathbb{C}$-vector space homomorphism 
\begin{equation*}
f:\text{HP}_{2}(\mathbb{P}_{\mathbb{C}}^{\delta_{Q}})\longrightarrow \mathbb{%
C}\left[ x^{\pm1},y^{\pm1}\right] ,\text{ mapping }%
z_{(i_{1},j_{1})}z_{(i_{2},j_{2})}\text{ onto }%
x^{i_{1}+i_{2}}y^{j_{1}+j_{2}},
\end{equation*}
has as kernel Ker$(f)$ the $\mathbb{C}$-vector space of homogeneous
polynomials of degree $2$ which belong to $I_{\mathcal{A}_{Q}}$ and as image
Im$(f)$ the linear span of $\left. \{x^{i}y^{j}\right\vert (i,j)\in 2(\ell%
\mathring{Q})\cap\mathbb{Z}^{2}\}$ (because every lattice point in $2(\ell%
\mathring{Q})$ is the sum of two lattice points of $\ell\mathring{Q},$ cf. 
\cite[Theorem 2.2.12, pp. 68-69]{CLS}). Taking into account Koelman's
Theorem \ref{KOELMANTHM}, \cite[Lemma 4.1, p. 31]{Sturmfels}, and the fact
that $\mathbb{V}(I_{\mathcal{A}_{Q}})$ is not contained in any hyperplane of 
$\mathbb{P}_{\mathbb{C}}^{\delta_{Q}},$ the equality 
\begin{equation*}
\dim_{\mathbb{C}}(\text{Ker}(f))=\dim_{\mathbb{C}}(\text{HP}_{2}(\mathbb{P}_{%
\mathbb{C}}^{\delta_{Q}}))-\dim_{\mathbb{C}}(\func{Im}(f))
\end{equation*}
gives (\ref{BETAQFORMULA}).
\end{proof}

\noindent{}$\blacktriangleright$ \textit{Back to toric log del Pezzos with
one singularity}. Now let $Q$ be an LDP-polygon such that $X_{Q}$ has
exactly one singularity. According to Theorem \ref{MAIN1}, there exist $p\in 
\mathbb{Z}_{>0}$ and $k\in\{1,2,3\},$ such that $X_{Q}\cong X_{Q_{p}^{\left[
k\right] }}$ with index $\ell=\frac{p+1}{2}$ for $p$ odd and $\ell=p+1$ for $%
p$ even. For this reason, to apply Corollary \ref{CCCOR} and to prove
Theorem \ref{MAIN2} we shall take a closer look at the dilated polars $\ell 
\mathring{Q}_{p}^{\left[ k\right] }$ of the polygons $Q_{p}^{\left[ k\right]
}$ defined in (\ref{DEFQS}).

\begin{lemma}
\label{VERTPOLAR}The vertex sets of the polygons $\ell\mathring{Q}_{p}^{%
\left[ k\right] },$ $k\in\{1,2,3\},$ are the following\emph{:}%
\begin{equation*}
\begin{array}{l}
\mathcal{V}(\ell\mathring{Q}_{p}^{\left[ 1\right] })=\left\{ 
\begin{array}{ll}
\left\{ \tbinom{-1}{\frac{p-1}{2}},\tbinom{\frac{p+1}{2}}{-\frac{(p+1)^{2}}{2%
}},\tbinom{\frac{p+1}{2}}{p+1}\right\} , & \text{\emph{if} }p\text{ \emph{is
odd,}} \\ 
\  &  \\ 
\left\{ \tbinom{-2}{p-1},\tbinom{p+1}{-(p+1)^{2}},\tbinom{p+1}{2(p+1)}%
\right\} , & \text{\emph{if} }p\text{ \emph{is even,}}%
\end{array}
\right. \\ 
\  \\ 
\mathcal{V}(\ell\mathring{Q}_{p}^{\left[ 2\right] })=\left\{ 
\begin{array}{ll}
\left\{ \tbinom{-1}{\frac{p-1}{2}},\tbinom{0}{-\frac{p+1}{2}},\tbinom {\frac{%
p+1}{2}}{-\frac{p(p+1)}{2}},\tbinom{\frac{p+1}{2}}{p+1}\right\} , & \text{%
\emph{if} }p\text{ \emph{is odd,}} \\ 
\  &  \\ 
\left\{ \tbinom{-2}{p-1},\tbinom{0}{-(p+1)},\tbinom{p+1}{-p(p+1)},\tbinom{p+1%
}{2(p+1)}\right\} , & \text{\emph{if} }p\text{ \emph{is even,}}%
\end{array}
\right. \\ 
\  \\ 
\mathcal{V}(\ell\mathring{Q}_{p}^{\left[ 3\right] })=\left\{ 
\begin{array}{ll}
\left\{ \tbinom{-1}{\frac{p-1}{2}},\tbinom{0}{-\frac{p+1}{2}},\tbinom {\frac{%
p+1}{2}}{-\frac{p(p+1)}{2}},\tbinom{\frac{p+1}{2}}{\frac{p+1}{2}},\tbinom{0}{%
\frac{p+1}{2}}\right\} , & \text{\emph{if} }p\text{ \emph{is odd,}} \\ 
\  &  \\ 
\left\{ \tbinom{-2}{p-1},\tbinom{0}{-(p+1)},\tbinom{p+1}{-p(p+1)},\tbinom{p+1%
}{p+1},\tbinom{0}{p+1}\right\} , & \text{\emph{if} }p\text{ \emph{is even.}}%
\end{array}
\right.%
\end{array}%
\end{equation*}
\end{lemma}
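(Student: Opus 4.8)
The plan is to read off the vertices of each polar polygon $\mathring{Q}_p^{[k]}$ directly from formula (\ref{VERTICESPOLAR}), which expresses $\mathcal{V}(\mathring{Q})$ as the collection of points $-\frac{1}{l_F}\boldsymbol{\eta}_F$ indexed by the edges $F\in\mathcal{F}(Q)$, and then to dilate by the index $\ell$. First I would enumerate the edges of $Q_p^{[k]}$ using the explicit vertex lists recorded in the proof of Theorem \ref{MAIN1} (cases (a), (b), (c)). For each edge $F$ I would compute the primitive outer normal $\boldsymbol{\eta}_F\in\mathbb{Z}^2$ (the primitive integral vector on which $\langle\boldsymbol{\eta}_F,\cdot\rangle$ is constant and positive along $F$, the value being $l_F=q_F/\gcd(q_F,p_F-1)$), form the vertex $-\frac{1}{l_F}\boldsymbol{\eta}_F$ of $\mathring{Q}$, and finally multiply by $\ell$.

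The decisive structural observation is that each $Q_p^{[k]}$ has exactly one non-basic edge, namely $\text{conv}(\{\tbinom{1}{-1},\tbinom{p}{1}\})$, whose cone is the $(p,p+1)$-cone; every remaining edge corresponds to a basic cone and hence has $q_F=1$ and $l_F=1$. For such a basic edge the computation is trivial: I solve $\langle\boldsymbol{\eta}_F,\mathbf{x}\rangle=1$ at its two endpoints to pin down $\boldsymbol{\eta}_F$, obtaining the vertex $-\boldsymbol{\eta}_F$ of $\mathring{Q}$ and then $-\ell\,\boldsymbol{\eta}_F$ after dilation. Carrying this out edge by edge produces the horizontal normal $\tbinom{0}{1}$ for the edge lying on $y=1$, together with the normals $\tbinom{-1}{p+1},\tbinom{-1}{p},\tbinom{-1}{-2},\tbinom{-1}{-1},\tbinom{0}{-1}$ for the various slanted basic edges occurring in the three cases; scaling each $-\boldsymbol{\eta}_F$ by $\ell$ reproduces the listed coordinates (e.g.\ $\ell\tbinom{1}{2}=\tbinom{\frac{p+1}{2}}{p+1}$ for $p$ odd).

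The only subtle point — and the unique source of the parity split — is the non-basic edge. Along it $\langle\tbinom{2}{-(p-1)},\cdot\rangle=p+1$, and $l_F=(p+1)/\gcd(p+1,p-1)=(p+1)/\gcd(p+1,2)$ equals $\tfrac{p+1}{2}$ for $p$ odd and $p+1$ for $p$ even; by (\ref{INDEXFORMEL}), since all other $l_F=1$, this is precisely the index $\ell$. Whether $\tbinom{2}{-(p-1)}$ is itself primitive again depends on the parity of $p$, for it equals $2\tbinom{1}{-(p-1)/2}$ when $p$ is odd. Thus $\boldsymbol{\eta}_F=\tbinom{1}{-(p-1)/2}$ for $p$ odd and $\boldsymbol{\eta}_F=\tbinom{2}{-(p-1)}$ for $p$ even; because $\ell=l_F$ on this edge, the dilated vertex is exactly $-\boldsymbol{\eta}_F$, giving $\tbinom{-1}{(p-1)/2}$ and $\tbinom{-2}{p-1}$ respectively, in agreement with the lemma.

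Finally I would verify that no edge has been overlooked: $Q_p^{[k]}$ has $k+2$ edges, matching the $k+2$ vertices listed for $\ell\mathring{Q}_p^{[k]}$, and that since the origin lies in the interior every $l_F$ is positive so all the signs are determined. I expect the main obstacle to be purely one of bookkeeping — simultaneously keeping the primitivity normalization of $\boldsymbol{\eta}_F$ and the parity-dependent value of $\ell$ consistent across all edges — rather than any genuine conceptual difficulty.
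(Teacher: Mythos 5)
Your proposal is correct and follows essentially the same route as the paper: both apply formula (\ref{VERTICESPOLAR}) edge by edge, computing the primitive normals $\boldsymbol{\eta}_F$ and local indices $l_F$ (with $l_F=1$ on every basic edge and $l_F=\ell$ on the unique non-basic edge), and then dilate by $\ell$. The only cosmetic difference is that the paper writes the non-basic normal in the parity-uniform form $\tbinom{2\ell/(p+1)}{-(p-1)\ell/(p+1)}$, whereas you make the parity split in the primitivity of $\tbinom{2}{-(p-1)}$ explicit; the computations agree.
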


\begin{proof}
Since $Q_{p}^{\left[ 1\right] }=$ conv$(\{\mathbf{v}_{1},\mathbf{v}_{2},%
\mathbf{v}_{3}\})$ with $\mathbf{v}_{1}=\tbinom{1}{-1},$ $\mathbf{v}_{2}=%
\tbinom{p}{1},\mathbf{v}_{3}=\tbinom{-1}{0},$ and 
\begin{equation*}
\boldsymbol{\eta}_{\text{conv}(\{\mathbf{v}_{1},\mathbf{v}_{2}\})}=\tbinom{%
\frac{2\ell}{p+1}}{-\frac{\left( p-1\right) \ell}{p+1}},\ \boldsymbol{\eta}_{%
\text{conv}(\{\mathbf{v}_{2},\mathbf{v}_{3}\})}=\tbinom{-1}{p+1},\ 
\boldsymbol{\eta}_{\text{conv}(\{\mathbf{v}_{3},\mathbf{v}_{1}\})}=\tbinom{-1%
}{-2},
\end{equation*}
with $l_{\text{conv}(\{\mathbf{v}_{1},\mathbf{v}_{2}\})}=\ell,$ $l_{\text{%
conv}(\{\mathbf{v}_{2},\mathbf{v}_{3}\})}=l_{\text{conv}(\{\mathbf{v}_{3},%
\mathbf{v}_{1}\})}=1,$ (\ref{VERTICESPOLAR}) gives%
\begin{equation*}
\mathcal{V}(\mathring{Q}_{p}^{\left[ 1\right] })=\left\{ \tbinom{-\frac {2}{%
p+1}}{\ \frac{p-1}{p+1}},\tbinom{1}{-(p+1)},\tbinom{1}{2}\right\} .
\end{equation*}
Analogously, we conclude that 
\begin{equation*}
\mathcal{V}(\mathring{Q}_{p}^{\left[ 2\right] })=\left\{ \tbinom{-\frac {2}{%
p+1}}{\ \frac{p-1}{p+1}},\tbinom{0}{-1},\tbinom{1}{-p},\tbinom{1}{2}\right\}
,\ \text{ }\mathcal{V}(\mathring{Q}_{p}^{\left[ 3\right] })=\left\{ \tbinom{-%
\frac{2}{p+1}}{\ \frac{p-1}{p+1}},\tbinom{0}{-1},\tbinom{1}{-p},\tbinom{1}{1}%
,\tbinom{0}{1}\right\} .
\end{equation*}
After multiplication with the index $\ell$ we get $\mathcal{V}(\ell 
\mathring{Q}_{p}^{\left[ k\right] }),$ $k\in\{1,2,3\}.$
\end{proof}

\begin{lemma}
\label{LEMMABOUNDARY}The number of lattice points on $\partial(\ell 
\mathring{Q}_{p}^{\left[ k\right] })$ is given in the table\emph{:}{%
\renewcommand{\arraystretch}{1.5} 
\begin{equation*}
\begin{tabular}{|c|c|c|c|c|c|c|c|}
\hline
\emph{No.} & $p$ & $k$ & $\sharp(\partial(\ell\mathring{Q}_{p}^{\left[ k%
\right] })\cap\mathbb{Z}^{2})$ & \emph{No.} & $p$ & $k$ & $\sharp
(\partial(\ell\mathring{Q}_{p}^{\left[ k\right] })\cap\mathbb{Z}^{2})$ \\ 
\hline\hline
\emph{(i)} & \emph{odd} & $1$ & $\frac{1}{2}\allowbreak\left( p+3\right)
^{2} $ & \emph{(iv)} & \emph{even} & $2$ & $p^{2}+5p+8$ \\ \hline
\emph{(ii)} & \emph{even} & $1$ & $(p+3)^{2}$ & \emph{(v)} & \emph{odd} & $3$
& $\frac{1}{2}\left( p^{2}+4p+7\right) $ \\ \hline
\emph{(iii)} & \emph{odd} & $2$ & $\frac{1}{2}\left( p^{2}+5p+8\right) $ & 
\emph{(vi)} & \emph{even} & $3$ & $p^{2}+4p+7$ \\ \hline
\end{tabular}
\ \ 
\end{equation*}
}
\end{lemma}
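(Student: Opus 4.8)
The plan is to read off the vertices of each $\ell\mathring{Q}_{p}^{\left[ k\right] }$ from Lemma~\ref{VERTPOLAR} and to count boundary lattice points via the elementary fact that, for a lattice polygon $P$ with vertices $\mathbf{w}_{1},\ldots ,\mathbf{w}_{m}\in \mathbb{Z}^{2}$ listed in cyclic order,
\[
\sharp(\partial P\cap \mathbb{Z}^{2})=\sum_{i=1}^{m}\gcd \bigl(\mathbf{w}_{i+1}-\mathbf{w}_{i}\bigr),
\]
where $\gcd (\mathbf{u})$ abbreviates the greatest common divisor of the absolute values of the two components of $\mathbf{u}$ (and $\mathbf{w}_{m+1}:=\mathbf{w}_{1}$). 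This holds because each half-open edge contributes exactly its content in lattice points, and each vertex is the terminal point of precisely one edge. Since the index is $\ell =\tfrac{p+1}{2}$ for $p$ odd and $\ell =p+1$ for $p$ even (Theorem~\ref{MAIN1}(iv)), the points listed in Lemma~\ref{VERTPOLAR} are genuine lattice points, so the formula applies directly.

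First I would, for each of the six cases, form the consecutive difference vectors $\mathbf{w}_{i+1}-\mathbf{w}_{i}$ from the vertex list; each factors as an explicit scalar times a primitive vector, and that scalar is precisely the edge-gcd. For instance, in case (i) ($k=1$, $p$ odd) the vertices $\tbinom{-1}{(p-1)/2}$, $\tbinom{(p+1)/2}{-(p+1)^{2}/2}$, $\tbinom{(p+1)/2}{p+1}$ yield the edge vectors $\tfrac{p+3}{2}\tbinom{1}{-p}$, $\tbinom{0}{(p+1)(p+3)/2}$ and $-\tfrac{p+3}{2}\tbinom{1}{1}$, with gcds $\tfrac{p+3}{2}$, $\tfrac{(p+1)(p+3)}{2}$ and $\tfrac{p+3}{2}$; summing gives $\tfrac{p+3}{2}\bigl(1+(p+1)+1\bigr)=\tfrac{1}{2}(p+3)^{2}$, as asserted. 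The other two odd cases ($k=2,3$) are entirely analogous: one short edge adjacent to the image of $\tbinom{-1}{0}$ contributes gcd $1$ or a small linear term in $p$, the two edges flanking the singular cone contribute $\tfrac{p+1}{2}$-type factors, and the remaining vertical edge contributes the quadratic term, after which the sum collapses to the stated closed form.

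A convenient shortcut halves the work for the even cases. The underlying polar $\mathring{Q}_{p}^{\left[ k\right] }$ is the same rational polygon for both parities, and $\ell _{\text{even}}=p+1=2\cdot \tfrac{p+1}{2}$; hence $\ell \mathring{Q}_{p}^{\left[ k\right] }$ for even $p$ is the image under $\mathbf{x}\mapsto 2\mathbf{x}$ of the (generally half-integral) polygon carrying the odd-type vertices. Because each of its edge vectors is an explicit scalar multiple of a primitive vector, doubling multiplies every edge-gcd by $2$, so the even count equals twice the odd expression; one checks this reproduces exactly entries (ii), (iv), (vi) from (i), (iii), (v). The only genuinely error-prone point — and thus the ``hard part,'' which is organizational rather than conceptual — is confirming that the lists in Lemma~\ref{VERTPOLAR} are in convex cyclic order, so that consecutive differences are true edges and no diagonal is counted; this is verified once and for all by checking that the successive edge vectors turn with constant orientation. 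Granting that, the remaining computation is purely mechanical.
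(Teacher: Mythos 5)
Your proof is correct and takes essentially the same approach as the paper: the paper's own (one-line) proof likewise invokes the fact that $\sharp(\partial P\cap\mathbb{Z}^{2})$ equals the sum of the greatest common divisors of the edge-difference vectors and applies it directly to the vertex lists of Lemma~\ref{VERTPOLAR}. Your explicit verification of case (i) and the doubling shortcut identifying the even-$p$ entries with twice the odd-$p$ expressions are just tidy organizations of the same mechanical computation.
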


\begin{proof}
Since the number of lattice points lying on the boundary of a
lattice-polygon (w.r.t. $\mathbb{Z}^{2}$) is computed by the sum of the
greatest common divisors of the differences of the vertex-coordinates of its
edges, the above table is produced directly by using Lemma \ref{VERTPOLAR}.
\end{proof}

\begin{remark}
Since $\sharp(\partial(\ell\mathring{Q}_{p}^{\left[ k\right] })\cap\mathbb{Z}%
^{2})\geq6$ for all $p\in\mathbb{Z}_{>0}$ and all $k\in\{1,2,3\},$ Theorem %
\ref{KOELMANTHM} and Corollary \ref{CCCOR} can be applied for the
LDP-polygons $Q_{p}^{\left[ k\right] }.$
\end{remark}

\begin{lemma}
\label{DEGREES}The projective degree $d_{Q_{p}^{\left[ k\right] }}$ of $%
\mathbb{V}(I_{\mathcal{A}_{Q_{p}^{\left[ k\right] }}})$ is given in the table%
\emph{:} 
{\small 
\begin{equation*}
\begin{tabular}{|c|c|c|c|c|c|c|c|}
\hline
\emph{No.} & $p$ & $k$ & $%
\begin{array}{c}
d_{Q_{p}^{\left[ k\right]}} \vspace{0.1cm} \\ 
\end{array}%
$ & \emph{No.} & $p$ & $k$ & $d_{Q_{p}^{\left[ k\right] }}$ \\ \hline\hline
\emph{(i)} & \emph{odd} & $1$ & $\frac{1}{4}\allowbreak\left( p+1\right)
\left( p+3\right) ^{2}$ & \emph{(iv)} & \emph{even} & $2$ & $\left(
p+1\right) \left( p^{2}+5p+8\right) $ \\ \hline
\emph{(ii)} & \emph{even} & $1$ & $\allowbreak\left( p+1\right) \left(
p+3\right) ^{2}$ & \emph{(v)} & \emph{odd} & $3$ & $\frac{1}{4}\left(
p+1\right) \left( p^{2}+4p+7\right) $ \\ \hline
\emph{(iii)} & \emph{odd} & $2$ & $\frac{1}{4}\allowbreak\left( p+1\right)
\left( p^{2}+5p+8\right) $ & \emph{(vi)} & \emph{even} & $3$ & $\left(
p+1\right) \left( p^{2}+4p+7\right) $ \\ \hline
\end{tabular}%
\end{equation*}
}
\end{lemma}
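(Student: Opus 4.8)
The plan is to invoke directly the degree formula (\ref{PROJDEGREE}), namely $d_{Q}=2\,\text{area}(\ell\mathring{Q})$, and reduce everything to the elementary geometry of the polar polygons whose vertices have already been recorded. Since dilation by $\ell$ scales areas by $\ell^{2}$, I would first write
\[
d_{Q_{p}^{[k]}}=2\,\text{area}(\ell\mathring{Q}_{p}^{[k]})=2\ell^{2}\,\text{area}(\mathring{Q}_{p}^{[k]}),
\]
and observe that the polar polygon $\mathring{Q}_{p}^{[k]}$ itself (as opposed to its dilate) does not depend on the parity of $p$; only the index $\ell$ does, through $\ell=\frac{p+1}{2}$ for $p$ odd and $\ell=p+1$ for $p$ even.

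The core step is to compute $\text{area}(\mathring{Q}_{p}^{[k]})$ for each $k\in\{1,2,3\}$ from the vertex sets $\mathcal{V}(\mathring{Q}_{p}^{[k]})$ produced in the proof of Lemma \ref{VERTPOLAR}. I would apply the shoelace (Gauss) formula to the cyclically ordered vertices. For $k=1$ the polar is a triangle, for $k=2$ a quadrilateral, and for $k=3$ a pentagon, so in each case this is a single finite sum of products of rational coordinates. Carrying out and simplifying these sums yields
\[
\text{area}(\mathring{Q}_{p}^{[1]})=\frac{(p+3)^{2}}{2(p+1)},\quad \text{area}(\mathring{Q}_{p}^{[2]})=\frac{p^{2}+5p+8}{2(p+1)},\quad \text{area}(\mathring{Q}_{p}^{[3]})=\frac{p^{2}+4p+7}{2(p+1)}.
\]

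Finally, I would substitute $\ell=\frac{p+1}{2}$ and $\ell=p+1$ into $d_{Q_{p}^{[k]}}=2\ell^{2}\,\text{area}(\mathring{Q}_{p}^{[k]})$. For $p$ odd this gives the factor $2\cdot\frac{(p+1)^{2}}{4}=\frac{(p+1)^{2}}{2}$ times the area, which cancels the denominator $2(p+1)$ to leave $\frac{1}{4}(p+1)(\cdots)$; for $p$ even it gives $2(p+1)^{2}$ times the area, producing $(p+1)(\cdots)$. This reproduces all six entries of the table.

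There is no genuine obstacle here: the result is a direct corollary of (\ref{PROJDEGREE}) and Lemma \ref{VERTPOLAR}, and the only work is careful arithmetic with the shoelace formula. The one point requiring mild attention is to order the vertices consistently (and take the absolute value) so that the shoelace sum returns the actual area rather than a signed quantity of the wrong sign, and to keep track of the common denominator $p+1$ appearing in the fractional coordinate of the first vertex when clearing fractions.
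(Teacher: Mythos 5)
Your proposal is correct, and it takes the route that the paper's proof mentions in its first sentence but deliberately declines to carry out. Concretely: the shoelace computations on the vertex sets $\mathcal{V}(\mathring{Q}_{p}^{\left[ k\right] })$ recorded in the proof of Lemma \ref{VERTPOLAR} do give
\begin{equation*}
\text{area}(\mathring{Q}_{p}^{\left[ 1\right] })=\tfrac{(p+3)^{2}}{2(p+1)},\qquad
\text{area}(\mathring{Q}_{p}^{\left[ 2\right] })=\tfrac{p^{2}+5p+8}{2(p+1)},\qquad
\text{area}(\mathring{Q}_{p}^{\left[ 3\right] })=\tfrac{p^{2}+4p+7}{2(p+1)},
\end{equation*}
and substituting $\ell=\frac{p+1}{2}$ (for $p$ odd) resp.\ $\ell=p+1$ (for $p$ even) into $d_{Q_{p}^{\left[ k\right] }}=2\ell^{2}\,\text{area}(\mathring{Q}_{p}^{\left[ k\right] })$ reproduces all six entries of the table; your observation that $\mathring{Q}_{p}^{\left[ k\right] }$ itself is parity-independent, the parity entering only through $\ell$, is also exactly right. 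The paper instead avoids coordinate geometry in the key middle step: it invokes \cite[Proposition 2.10, p. 79]{Oda}, which identifies $2\,\text{area}(\mathring{Q}_{p}^{\left[ k\right] })$ with $K_{X_{Q_{p}^{\left[ k\right] }}}^{2}$, and evaluates the latter by the self-intersection formula (\ref{SELFINTFORMULA}) applied to the fan data of $Q_{p}^{\left[ k\right] }$ (one $(p,p+1)$-cone with $\widehat{p}=p$, a single continued-fraction entry $b_{1}^{(1)}=p+1$, and $\nu=k+2$), obtaining the uniform closed form $K_{X_{Q_{p}^{\left[ k\right] }}}^{2}=6-k+p+\frac{4}{p+1}$ and then $d_{Q_{p}^{\left[ k\right] }}=\ell^{2}K_{X_{Q_{p}^{\left[ k\right] }}}^{2}$. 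Both arguments are complete and rest on (\ref{PROJDEGREE}); yours is more elementary and self-contained, at the cost of fraction bookkeeping and of checking the cyclic ordering of vertices in three separate shoelace sums, while the paper's exploits the singularity data already encoded in the fan, needs no vertex coordinates at all, and produces a single formula in $k$ and $p$ that makes the structure of the table transparent.
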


\begin{proof}
To determine the area of $\ell\mathring{Q}_{p}^{\left[ k\right] }$ one may
work with its vertex set given in Lemma \ref{VERTPOLAR}. Alternatively,
using \cite[Proposition 2.10, p. 79]{Oda} and formula (\ref{SELFINTFORMULA})
for $X_{Q_{p}^{\left[ k\right] }}$ we deduce that%
\begin{equation*}
2\,\text{area}(\mathring{Q}_{p}^{\left[ k\right] })=K_{X_{Q_{p}^{\left[ k%
\right] }}}^{2}=6-k+p+\frac{4}{p+1},
\end{equation*}
and we read off $d_{Q_{p}^{\left[ k\right] }}$ easier via (\ref{PROJDEGREE})
which gives $d_{Q_{p}^{\left[ k\right] }}=\ell^{2}K_{X_{Q_{p}^{\left[ k%
\right] }}}^{2}$.
\end{proof}

\begin{lemma}
The dimension $\delta_{Q_{p}^{\left[ k\right] }}$ of the projective space in
which $\mathbb{V}(I_{\mathcal{A}_{Q_{p}^{\left[ k\right] }}})$ is embedded
equals%
\begin{equation}
\delta_{Q_{p}^{\left[ k\right] }}=\frac{1}{2}(d_{Q_{p}^{\left[ k\right]
}}+\sharp(\partial(\ell\mathring{Q}_{p}^{\left[ k\right] })\cap\mathbb{Z}%
^{2})).  \label{DELTAPICK}
\end{equation}
\end{lemma}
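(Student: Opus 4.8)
The plan is to recognize that the asserted identity (\ref{DELTAPICK}) is simply Pick's formula, once it is rewritten in terms of the data already at hand. First I would unwind the two quantities on the right using the definitions of this section: by construction $\delta_{Q_p^{[k]}}=\sharp((\ell\mathring{Q}_p^{[k]})\cap\mathbb{Z}^2)-1$, while formula (\ref{PROJDEGREE}) gives $d_{Q_p^{[k]}}=2\,\mathrm{area}(\ell\mathring{Q}_p^{[k]})$. After these substitutions the claim (\ref{DELTAPICK}) becomes equivalent to
\[
\sharp((\ell\mathring{Q}_p^{[k]})\cap\mathbb{Z}^2)-1=\mathrm{area}(\ell\mathring{Q}_p^{[k]})+\tfrac{1}{2}\sharp(\partial(\ell\mathring{Q}_p^{[k]})\cap\mathbb{Z}^2).
\]

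To invoke Pick's formula I first need $\ell\mathring{Q}_p^{[k]}$ to be an honest lattice polygon. This is exactly what the index guarantees: recall from \S\ref{LDPPOL} that $\ell=\min\{\kappa\in\mathbb{Z}_{>0}\mid\mathcal{V}(\kappa\mathring{Q})\subset\mathbb{Z}^2\}$, so the vertices of $\ell\mathring{Q}_p^{[k]}$ are integral; indeed Lemma \ref{VERTPOLAR} exhibits them explicitly and they all lie in $\mathbb{Z}^2$. Hence $\ell\mathring{Q}_p^{[k]}$ is a convex lattice polygon and Pick's formula (cf.\ \cite[p.\ 113]{Fulton}, already used in the proof of Proposition \ref{KEY-PROP}) applies. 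Writing $B:=\sharp(\partial(\ell\mathring{Q}_p^{[k]})\cap\mathbb{Z}^2)$ for the number of boundary lattice points and $I$ for the number of interior ones, Pick's formula reads $\mathrm{area}(\ell\mathring{Q}_p^{[k]})=I+\tfrac{1}{2}B-1$, whereas $\sharp((\ell\mathring{Q}_p^{[k]})\cap\mathbb{Z}^2)=I+B$. Eliminating $I$ between these two relations yields $\mathrm{area}(\ell\mathring{Q}_p^{[k]})+\tfrac{1}{2}B=I+B-1=\sharp((\ell\mathring{Q}_p^{[k]})\cap\mathbb{Z}^2)-1$, which is precisely the reduced form of (\ref{DELTAPICK}) displayed above.

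There is essentially no obstacle to overcome: the one point that genuinely requires justification is the integrality of the vertices of $\ell\mathring{Q}_p^{[k]}$, which legitimizes Pick's formula, and this has already been established in Lemma \ref{VERTPOLAR}. Everything else is a single algebraic rearrangement, so I would keep the argument to just those few lines rather than recomputing areas or lattice-point counts from the vertex data.
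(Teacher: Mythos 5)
Your proof is correct and follows exactly the paper's approach: the paper's own proof simply states that (\ref{DELTAPICK}) is an immediate consequence of Pick's formula, which is precisely the computation you carry out in detail (substituting $\delta_{Q_{p}^{\left[ k\right] }}=\sharp((\ell\mathring{Q}_{p}^{\left[ k\right] })\cap\mathbb{Z}^{2})-1$ and $d_{Q_{p}^{\left[ k\right] }}=2\,\mathrm{area}(\ell\mathring{Q}_{p}^{\left[ k\right] })$, then eliminating the interior lattice-point count). Your additional remark that the integrality of the vertices of $\ell\mathring{Q}_{p}^{\left[ k\right] }$ is what legitimizes Pick's formula is a sound observation that the paper leaves implicit.
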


\begin{proof}
(\ref{DELTAPICK}) is immediate consequence of Pick's formula.
\end{proof}

\begin{lemma}
\label{BETACOMPUTATIONS}The number $\beta_{Q_{p}^{\left[ k\right] }}$ (of
the elements of any minimal generating system of\emph{\ }$I_{\mathcal{A}%
_{Q_{p}^{\left[ k\right] }}})$ is given by the formula:%
\begin{equation}
\beta_{Q_{p}^{\left[ k\right] }}=\tfrac{1}{2}(\delta_{Q_{p}^{\left[ k\right]
}}+1)(\delta_{Q_{p}^{\left[ k\right] }}+2)-(2d_{Q_{p}^{\left[ k\right]
}}+\sharp(\partial(\ell\mathring{Q}_{p}^{\left[ k\right] })\cap\mathbb{Z}%
^{2})+1).  \label{BETAQPKFORMULA}
\end{equation}
\end{lemma}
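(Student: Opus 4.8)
The plan is to derive (\ref{BETAQPKFORMULA}) directly from Corollary \ref{CCCOR} by rewriting the subtracted term. Writing $P:=\ell\mathring{Q}_{p}^{\left[ k\right] }$, Corollary \ref{CCCOR} reads
\[
\beta_{Q_{p}^{\left[ k\right] }}=\tbinom{\delta_{Q_{p}^{\left[ k\right] }}+2}{2}-\sharp(2P\cap\mathbb{Z}^{2}),
\]
and since $\tbinom{\delta+2}{2}=\tfrac{1}{2}(\delta+1)(\delta+2)$ the first summand of (\ref{BETAQPKFORMULA}) is already accounted for. Hence it suffices to establish the identity
\[
\sharp(2P\cap\mathbb{Z}^{2})=2\,d_{Q_{p}^{\left[ k\right] }}+\sharp(\partial P\cap\mathbb{Z}^{2})+1 .
\]

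The key input is that $P$ is a genuine lattice polygon: by Lemma \ref{VERTPOLAR} all vertices of $\ell\mathring{Q}_{p}^{\left[ k\right] }$ lie in $\mathbb{Z}^{2}$, so $2P$ is again a lattice polygon. I would then apply Pick's formula to $2P$ in the form $\sharp(2P\cap\mathbb{Z}^{2})=\text{area}(2P)+\tfrac{1}{2}\sharp(\partial(2P)\cap\mathbb{Z}^{2})+1$, obtained by adding interior and boundary lattice points and eliminating the interior count through $A=I+\tfrac{1}{2}B-1$. For the area, dilation by $2$ multiplies it by $4$, so that $\text{area}(2P)=4\,\text{area}(P)=2\,(2\,\text{area}(P))=2\,d_{Q_{p}^{\left[ k\right] }}$ by (\ref{PROJDEGREE}).

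For the boundary count I would invoke the edge-by-edge description already used in the proof of Lemma \ref{LEMMABOUNDARY}: the number of lattice points on $\partial P$ equals the sum over the edges of the greatest common divisors of the differences of the vertex-coordinates. Since each edge vector of $2P$ is exactly twice the corresponding edge vector of $P$, every such gcd doubles, giving $\sharp(\partial(2P)\cap\mathbb{Z}^{2})=2\,\sharp(\partial P\cap\mathbb{Z}^{2})$. Substituting the two computations into the Pick expression yields the displayed identity, and plugging it back into Corollary \ref{CCCOR} produces exactly (\ref{BETAQPKFORMULA}).

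There is essentially no hard step here; the only point requiring a moment's care is the linear scaling of the boundary lattice-point count under dilation, which rests on the gcd description of $\sharp(\partial\,\cdot\,\cap\mathbb{Z}^{2})$ together with the integrality of the vertices guaranteed by Lemma \ref{VERTPOLAR}. Once that is noted, the argument is a short combination of Pick's formula with (\ref{PROJDEGREE}) and Corollary \ref{CCCOR}, and it is uniform in $k\in\{1,2,3\}$ and in the parity of $p$.
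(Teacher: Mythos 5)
Your proof is correct and takes essentially the same route as the paper: both reduce, via Corollary \ref{CCCOR} and formula (\ref{PROJDEGREE}), to the identity $\sharp(2(\ell\mathring{Q}_{p}^{\left[ k\right] })\cap\mathbb{Z}^{2})=2d_{Q_{p}^{\left[ k\right] }}+\sharp(\partial(\ell\mathring{Q}_{p}^{\left[ k\right] })\cap\mathbb{Z}^{2})+1$. The only difference is that the paper obtains this identity by citing the Ehrhart polynomial of the lattice polygon $\ell\mathring{Q}_{p}^{\left[ k\right] }$ evaluated at $2$, whereas you re-derive the same fact elementarily from Pick's formula applied to the dilate $2(\ell\mathring{Q}_{p}^{\left[ k\right] })$ together with the scaling of the area and of the edge-wise gcd count of boundary points, correctly noting that Lemma \ref{VERTPOLAR} guarantees integrality of the vertices.
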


\begin{proof}
By the main properties of Ehrhart polynomial of the lattice polygon $\ell%
\mathring{Q}_{p}^{\left[ k\right] }$ (cf. \cite[Example 9.4.4, p. 433]{CLS})
we obtain%
\begin{equation*}
\sharp(2(\ell\mathring{Q}_{p}^{\left[ k\right] })\cap\mathbb{Z}^{2})=4\,%
\text{area}(\ell\mathring{Q})+\sharp(\partial(\ell\mathring{Q}_{p}^{\left[ k%
\right] })\cap\mathbb{Z}^{2})+1.
\end{equation*}
Hence, (\ref{BETAQPKFORMULA}) follows from (\ref{BETAQFORMULA}) and (\ref%
{PROJDEGREE}).
\end{proof}

\noindent Hyperplanes $\mathcal{H}\subset\mathbb{P}_{\mathbb{C}%
}^{\delta_{Q_{p}^{\left[ k\right] }}}$ give curves $\mathbb{V}(I_{\mathcal{A}%
_{Q_{p}^{\left[ k\right] }}})\cap\mathcal{H}$ which are linearly equivalent
to $-\ell K_{X_{Q_{p}^{\left[ k\right] }}}.$ For \textit{generic} $\mathcal{H%
}$'s the intersection $\mathcal{C}_{Q_{p}^{\left[ k\right] }}:=\mathbb{V}(I_{%
\mathcal{A}_{Q_{p}^{\left[ k\right] }}})\cap\mathcal{H}$ is (by Bertini's
Theorem) a smooth connected curve in the smooth locus of $\mathbb{V}(I_{%
\mathcal{A}_{Q_{p}^{\left[ k\right] }}})\cong X_{Q_{p}^{\left[ k\right] }}.$
The genus of $\mathcal{C}_{Q_{p}^{\left[ k\right] }}$ is called the\textit{\
sectional genus} $g_{Q_{p}^{\left[ k\right] }}\emph{\medskip}$ \textit{of} $%
X_{Q_{p}^{\left[ k\right] }}.$

\begin{lemma}
The sectional genus of $X_{Q_{p}^{\left[ k\right] }}$ is%
\begin{equation}
g_{Q_{p}^{\left[ k\right] }}=\delta_{Q_{p}^{\left[ k\right]
}}-\sharp(\partial(\ell\mathring{Q}_{p}^{\left[ k\right] })\cap\mathbb{Z}%
^{2})+1.  \label{GQPKFORMULA}
\end{equation}
\end{lemma}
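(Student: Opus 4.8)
The plan is to show that $g_{Q_{p}^{[k]}}$ equals the number $\sharp(\text{int}(\ell\mathring{Q}_{p}^{[k]})\cap\mathbb{Z}^{2})$ of interior lattice points of the dilated polar polygon, and then to rewrite this count in the form (\ref{GQPKFORMULA}) by elementary lattice-point bookkeeping. First I would record that, by the discussion preceding the lemma, a generic $\mathcal{C}_{Q_{p}^{[k]}}$ is a smooth connected curve contained in the smooth locus of $\mathbb{V}(I_{\mathcal{A}_{Q_{p}^{[k]}}})\cong X_{Q_{p}^{[k]}}$ and linearly equivalent to $-\ell K_{X_{Q_{p}^{[k]}}}$; hence its genus may be computed as that of a generic member of the complete linear system. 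Under the torus embedding this member is the Zariski closure of the zero locus of a generic Laurent polynomial with Newton polygon $\ell\mathring{Q}_{p}^{[k]}$, so the classical theorem of Khovanskii on the geometric genus of a nondegenerate curve with prescribed Newton polygon (see also \cite{CLS}) yields
\begin{equation*}
g_{Q_{p}^{[k]}}=\sharp(\text{int}(\ell\mathring{Q}_{p}^{[k]})\cap\mathbb{Z}^{2}).
\end{equation*}

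For the second step I would simply split the lattice points of $\ell\mathring{Q}_{p}^{[k]}$ into interior and boundary ones and use the definition $\delta_{Q_{p}^{[k]}}=\sharp((\ell\mathring{Q}_{p}^{[k]})\cap\mathbb{Z}^{2})-1$ recorded at the beginning of this section; this gives immediately
\begin{equation*}
\sharp(\text{int}(\ell\mathring{Q}_{p}^{[k]})\cap\mathbb{Z}^{2})=\delta_{Q_{p}^{[k]}}-\sharp(\partial(\ell\mathring{Q}_{p}^{[k]})\cap\mathbb{Z}^{2})+1,
\end{equation*}
which is exactly (\ref{GQPKFORMULA}). As a cross-check (and as an alternative that sidesteps the genus--Newton-polygon theorem) I would note that adjunction for the class $-\ell K_{X_{Q_{p}^{[k]}}}$ of $\mathcal{C}_{Q_{p}^{[k]}}$ gives $2g_{Q_{p}^{[k]}}-2=\ell(\ell-1)K_{X_{Q_{p}^{[k]}}}^{2}$; feeding in $d_{Q_{p}^{[k]}}=2\,\text{area}(\ell\mathring{Q}_{p}^{[k]})=\ell^{2}K_{X_{Q_{p}^{[k]}}}^{2}$ from Lemma \ref{DEGREES} together with Pick's formula for $\ell\mathring{Q}_{p}^{[k]}$ reproduces the same equality.

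The step I expect to be the main obstacle is the very first one, namely justifying that the generic section is smooth and avoids the unique singular point $\text{orb}(\sigma_{1})$, so that its arithmetic and geometric genus coincide and the toric interior-point count applies with no correction term. Here this turns out not to be problematic: since $-\ell K_{X_{Q_{p}^{[k]}}}$ is very ample and $\sharp(\partial(\ell\mathring{Q}_{p}^{[k]})\cap\mathbb{Z}^{2})\geq 6$ for all $p$ and all $k\in\{1,2,3\}$, Bertini's Theorem applies on the smooth quasi-projective locus and a generic hyperplane misses the finitely many singular points of the image, so the curve $\mathcal{C}_{Q_{p}^{[k]}}$ is indeed smooth of genus equal to $\sharp(\text{int}(\ell\mathring{Q}_{p}^{[k]})\cap\mathbb{Z}^{2})$, and (\ref{GQPKFORMULA}) follows.
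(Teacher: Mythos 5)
Your proof is correct and takes essentially the same route as the paper: the paper's own proof consists precisely of invoking the fact that $g_{Q_{p}^{[k]}}=\sharp(\mathrm{int}(\ell\mathring{Q}_{p}^{[k]})\cap\mathbb{Z}^{2})$ (citing \cite[Proposition 10.5.8, p.~509]{CLS}, which is the toric form of the Khovanskii-type genus formula you use) and then the same interior/boundary lattice-point count via the definition of $\delta_{Q_{p}^{[k]}}$. Your adjunction cross-check is a harmless extra, though to close it one would also need the identity $\sharp(\partial(\ell\mathring{Q}_{p}^{[k]})\cap\mathbb{Z}^{2})=\ell\,K_{X_{Q_{p}^{[k]}}}^{2}$, which you do not verify; the main argument is complete without it.
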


\begin{proof}
(\ref{GQPKFORMULA}) follows from the fact that $g_{Q_{p}^{\left[ k\right]
}}=\sharp($int$(\ell\mathring{Q}_{p}^{\left[ k\right] })\cap\mathbb{Z}^{2}).$
(See \cite[Proposition 10.5.8, p. 509]{CLS}.)
\end{proof}

\noindent\textit{Proof of Theorem }\ref{MAIN2}: The number $\sharp
(\partial(\ell\mathring{Q}_{p}^{\left[ k\right] })\cap\mathbb{Z}^{2})$ and
the projective degree $d_{Q_{p}^{\left[ k\right] }}$ are known from Lemmas %
\ref{LEMMABOUNDARY} and \ref{DEGREES}, respectively, while $\delta _{Q_{p}^{%
\left[ k\right] }}$ is computed via (\ref{DELTAPICK}), leading to Table (\ref%
{TABELLE1}), and consequently to Table (\ref{TABELLE2}) by making use of
formula (\ref{BETAQPKFORMULA}). Finally, one obtains Table (\ref{TABELLE3})
by means of the equality (\ref{GQPKFORMULA}). \hfill$\square$

\begin{note}
For a \texttt{Magma} code for the computation of a minimal generating system
of the ideal defining the projective toric surface associated to an \textit{%
arbitrary} lattice polygon, see \cite{CC2}. In our particular case (in which
we deal only with \textit{quadrics}) it is enough to collect all vectorial
relations $(i_{1},j_{1})+(i_{2},j_{2})=(i_{1}^{\prime},j_{1}^{%
\prime})+(i_{2}^{\prime},j_{2}^{\prime}),$ and to determine a $\mathbb{C}$%
-linearly independent subset of the set of the corresponding quadratic
binomials $z_{(i_{1},j_{1})}z_{(i_{2},j_{2})}-z_{(i_{1}^{\prime},j_{1}^{%
\prime})}z_{(i_{2}^{\prime},j_{2}^{\prime})}$ by simply applying Gaussian
elimination. For a short routine (written in \texttt{Python}) see \cite%
{Dais-Markakis}.
\end{note}

\begin{examples}
(i) The ideal $I_{\mathcal{A}_{Q_{1}^{\left[ 2\right] }}}$ (with $\mathbb{V}%
(I_{\mathcal{A}_{Q_{1}^{\left[ 2\right] }}})\subset \mathbb{P}_{\mathbb{C}%
}^{7}$) is minimally generated by the following $14$ quadrics: {\small 
\begin{equation*}
\begin{array}{l}
z_{(-1,0)}z_{(1,-1)}-z_{(0,-1)}z_{(0,0)},\
z_{(-1,0)}z_{(1,0)}-z_{(0,-1)}z_{(0,1)},\ z_{(1,0)}^{2}-z_{(1,1)}z_{(1,-1)},
\\ 
z_{(-1,0)}z_{(1,1)}-z_{(0,0)}z_{(0,1)},\
z_{(1,1)}z_{(1,0)}-z_{(1,2)}z_{(1,-1)},\ z_{(1,1)}^{2}-z_{(1,2)}z_{(1,0)},
\\ 
z_{(0,1)}^{2}-z_{(-1,0)}z_{(1,2)},\
z_{(0,1)}z_{(1,-1)}-z_{(0,-1)}z_{(1,1),}\
z_{(0,1)}z_{(1,0)}-z_{(0,-1)}z_{(1,2)}, \\ 
z_{(0,1)}z_{(1,1)}-z_{(0,0)}z_{(1,2)},\
z_{(0,0)}z_{(1,-1)}-z_{(0,-1)}z_{(1,0)},\
z_{(0,0)}z_{(1,0)}-z_{(0,-1)}z_{(1,1)}, \\ 
z_{(0,0)}z_{(1,1)}-z_{(0,-1)}z_{(1,2)},\ z_{(0,0)}^{2}-z_{(0,-1)}z_{(0,1)}.%
\end{array}%
\end{equation*}
} (ii) Correspondingly, the $9$ quadrics {\small 
\begin{equation*}
\begin{array}{l}
z_{(-1,0)}z_{(1,0)}-z_{(0,1)}z_{(0,-1)},\
z_{(1,0)}^{2}-z_{(1,1)}z_{(1,-1)},\ z_{(-1,0)}z_{(1,-1)}-z_{(0,0)}z_{(0,-1)},
\\ 
z_{(-1,0)}z_{(1,1)}-z_{(0,1)}z_{(0,0)},\
z_{(0,-1)}z_{(1,0)}-z_{(0,0)}z_{(1,-1)},\
z_{(0,-1)}z_{(1,1)}-z_{(0,1)}z_{(1,-1)}, \\ 
z_{(0,0)}z_{(1,0)}-z_{(0,1)}z_{(1,-1)},\
z_{(0,0)}z_{(1,1)}-z_{(0,1)}z_{(1,0)},\ z_{(0,0)}^{2}-z_{(0,1)}z_{(0,-1)}%
\end{array}%
\end{equation*}
build a }minimal set of generators of the ideal $I_{\mathcal{A}_{Q_{1}^{%
\left[ 3\right] }}},$ and $\mathbb{V}(I_{\mathcal{A}_{Q_{1}^{\left[ 3\right]
}}})\subset\mathbb{P}_{\mathbb{C}}^{6}.$ ($X_{Q_{1}^{\left[ 3\right] }}$ is
obtained by blowing up $X_{Q_{1}^{\left[ 2\right] }}$ at one non-singular
point, cf. Figure \ref{Fig.5}.)
\end{examples}

\begin{figure}[ht]
\includegraphics[height=4cm, width=11.5cm]{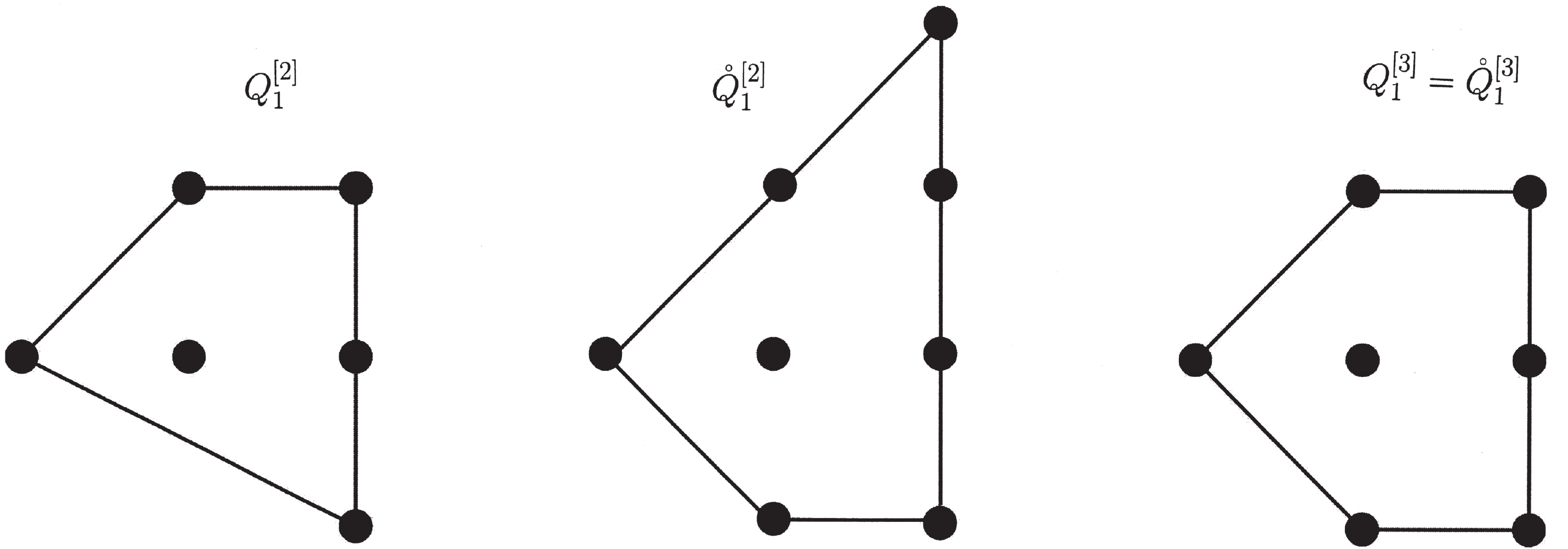}
\caption{}
\label{Fig.5}
\end{figure}

\noindent(iii) The next coming example, namely that one created by the
LDP-polygon $Q_{3}^{\left[ 3\right] }$ (cf. Figure \ref{Fig.6}), in which $2%
\mathring{Q}_{3}^{\left[ 3\right] }\cap\mathbb{Z}^{2}$ consists of $22$
lattice points and $\mathbb{V}(I_{\mathcal{A}_{Q_{3}^{\left[ 3\right]
}}})\subset\mathbb{P}_{\mathbb{C}}^{21},$ is much more complicated$.$ Using 
\cite{Dais-Markakis} we see that $I_{\mathcal{A}_{Q_{3}^{\left[ 3\right] }}}$
is minimally generated by the following $182$ quadrics:

{\scriptsize 
\begin{equation*}
\begin{array}{lll}
z_{(0,-2)}z_{(2,-2)}-z_{(1,-4)}z_{(1,0)}, & 
z_{(1,-4)}z_{(2,-4)}-z_{(1,-2)}z_{(2,-6)},\newline
& z_{(-1,1)}z_{(1,-1)}-z_{(0,-2)}z_{(0,2)}, \\ 
z_{(0,-2)}z_{(2,0)}-z_{(1,-1)}z_{(1,-1)}, & 
z_{(-1,1)}z_{(1,-4)}-z_{(0,-2)}z_{(0,-1)}, & 
z_{(1,-4)}z_{(2,-1)}-z_{(1,-1)}z_{(2,-4)}, \\ 
z_{(0,-2)}z_{(2,-4)}-z_{(1,-4)}z_{(1,-2)}, & 
z_{(2,-6)}z_{(2,-1)}-z_{(2,-5)}z_{(2,-2)}, & 
z_{(0,-2)}z_{(2,-3)}-z_{(0,0)}z_{(2,-5)}, \\ 
z_{(-1,1)}z_{(2,0)}-z_{(0,2)}z_{(1,-1)}, & 
z_{(0,-1)}z_{(2,2)}-z_{(0,0)}z_{(2,1)}, & 
z_{(-1,1)}z_{(2,-6)}-z_{(0,-1)}z_{(1,-4)}, \\ 
z_{(0,-2)}z_{(2,-5)}-z_{(0,-1)}z_{(2,-6)}, & 
z_{(1,-4)}z_{(2,2)}-z_{(1,-1)}z_{(2,-1)}, & 
z_{(1,-4)}z_{(2,1)}-z_{(1,0)}z_{(2,-3)}, \\ 
z_{(-1,1)}z_{(2,-5)}-z_{(0,-2)}z_{(1,-2)}, & 
z_{(-1,1)}z_{(2,1)}-z_{(0,2)}z_{(1,0)}, & 
z_{(-1,1)}z_{(2,-3)}-z_{(0,1)}z_{(1,-3)}, \\ 
z_{(-1,1)}z_{(2,-5)}-z_{(0,-1)}z_{(1,-3)}, & 
z_{(1,-4)}z_{(2,0)}-z_{(1,-2)}z_{(2,-2)}, & 
z_{(1,-4)}z_{(2,1)}-z_{(1,-1)}z_{(2,-2)}, \\ 
z_{(1,-4)}z_{(2,-3)}-z_{(1,-1)}z_{(2,-6)}, & 
z_{(1,-4)}z_{(2,2)}-z_{(1,-3)}z_{(2,1)}, & 
z_{(-1,1)}z_{(2,-1)}-z_{(0,1)}z_{(1,-1)}, \\ 
z_{(0,0)}z_{(2,2)}-z_{(0,2)}z_{(2,0)}, & 
z_{(-1,1)}z_{(2,-4)}-z_{(0,1)}z_{(1,-4)}, & 
z_{(1,-3)}z_{(2,2)}-z_{(1,0)}z_{(2,-1)}, \\ 
z_{(0,-2)}z_{(2,-2)}-z_{(1,-2)}^{2}, & 
z_{(1,-4)}z_{(2,0)}-z_{(1,1)}z_{(2,-5)}, & 
z_{(1,-4)}z_{(2,-2)}-z_{(1,0)}z_{(2,-6)}, \\ 
z_{(2,-6)}z_{(2,-1)}-z_{(2,-4)}z_{(2,-3)}, & 
z_{(2,-6)}z_{(2,2)}-z_{(2,-4)}z_{(2,0)}, & z_{(2,-2)}z_{(2,2)}-z_{(2,0)}^{2},
\\ 
z_{(1,-3)}z_{(2,2)}-z_{(1,-2)}z_{(2,1)}, & 
z_{(0,0)}z_{(2,2)}-z_{(0,1)}z_{(2,1)}, & 
z_{(1,-1)}z_{(2,2)}-z_{(1,2)}z_{(2,-1)}, \\ 
z_{(-1,1)}z_{(2,-2)}-z_{(0,-2)}z_{(1,1)}, & 
z_{(2,-6)}z_{(2,-2)}-z_{(2,-4)}z_{(2,-4)}, & 
z_{(1,-1)}z_{(2,2)}-z_{(1,1)}z_{(2,0)}, \\ 
z_{(-1,1)}z_{(2,-4)}-z_{(0,-1)}z_{(1,-2)}, & 
z_{(1,-4)}z_{(2,0)}-z_{(1,-3)}z_{(2,-1)}, & 
z_{(0,-2)}z_{(2,-3)}-z_{(1,-3)}z_{(1,-2)}, \\ 
z_{(0,-2)}z_{(2,2)}-z_{(0,1)}z_{(2,-1)}, & 
z_{(1,-4)}z_{(2,2)}-z_{(1,1)}z_{(2,-3)}, & 
z_{(1,1)}z_{(2,2)}-z_{(1,2)}z_{(2,1)}, \\ 
z_{(2,-4)}z_{(2,2)}-z_{(2,-3)}z_{(2,1)}, & 
z_{(2,-6)}z_{(2,2)}-z_{(2,-3)}z_{(2,-1)}, & 
z_{(2,-4)}z_{(2,2)}-z_{(2,-2)}z_{(2,0)}, \\ 
z_{(-1,1)}z_{(2,-1)}-z_{(0,0)}z_{(1,0)}, & 
z_{(-1,1)}z_{(2,0)}-z_{(0,-1)}z_{(1,2)}, & 
z_{(1,-4)}z_{(2,0)}-z_{(1,0)}z_{(2,-4)}, \\ 
z_{(-1,1)}z_{(2,-4)}-z_{(0,0)}z_{(1,-3)}, & 
z_{(0,-2)}z_{(2,-4)}-z_{(0,-1)}z_{(2,-5)}, & 
z_{(0,-2)}z_{(2,1)}-z_{(1,-2)}z_{(1,1)}, \\ 
z_{(-1,1)}z_{(2,0)}-z_{(0,0)}z_{(1,1)}, & 
z_{(2,-6)}z_{(2,1)}-z_{(2,-5)}z_{(2,0)}, & 
z_{(-1,1)}z_{(2,-2)}-z_{(0,2)}z_{(1,-3)}, \\ 
z_{(0,-2)}z_{(2,-1)}-z_{(1,-3)}z_{(1,0)}, & 
z_{(-1,1)}z_{(1,-2)}-z_{(0,-1)}z_{(0,0)}, & 
z_{(1,-2)}z_{(2,2)}-z_{(1,0)}z_{(2,0)}, \\ 
z_{(0,1)}z_{(2,2)}-z_{(0,2)}z_{(2,1)}, & 
z_{(1,-4)}z_{(2,2)}-z_{(1,2)}z_{(2,-4)}, & 
z_{(2,-2)}z_{(2,2)}-z_{(2,-1)}z_{(2,1)}, \\ 
z_{(1,-4)}z_{(2,2)}-z_{(1,0)}z_{(2,-2)}, & 
z_{(0,-2)}z_{(2,0)}-z_{(0,1)}z_{(2,-3)}, & 
z_{(0,-2)}z_{(2,-4)}-z_{(1,-3)}^{2}, \\ 
z_{(1,-4)}z_{(2,-1)}-z_{(1,1)}z_{(2,-6)}, & 
z_{(1,-4)}z_{(2,-3)}-z_{(1,-3)}z_{(2,-4)}, & 
z_{(0,-2)}z_{(2,0)}-z_{(1,-4)}z_{(1,2)}, \\ 
z_{(0,-2)}z_{(2,0)}-z_{(1,-2)}z_{(1,0)}, & 
z_{(0,-2)}z_{(2,1)}-z_{(0,-1)}z_{(2,0)}, & 
z_{(1,-4)}z_{(2,-2)}-z_{(1,-1)}z_{(2,-5)}, \\ 
z_{(0,-2)}z_{(2,1)}-z_{(1,-1)}z_{(1,0)}, & 
z_{(-1,1)}z_{(2,-3)}-z_{(0,0)}z_{(1,-2)}, & 
z_{(-1,1)}z_{(2,-2)}-z_{(0,0)}z_{(1,-1)}, \\ 
z_{(0,-2)}z_{(2,-1)}-z_{(1,-2)}z_{(1,-1)}, & 
z_{(2,-6)}z_{(2,-4)}-z_{(2,-5)}^{2}, & 
z_{(-1,1)}z_{(2,-4)}-z_{(0,-2)}z_{(1,-1)}, \\ 
z_{(0,-2)}z_{(2,-1)}-z_{(1,-4)}z_{(1,1)}, & 
z_{(-1,1)}z_{(2,-3)}-z_{(0,-1)}z_{(1,-1)}, & 
z_{(0,-2)}z_{(2,2)}-z_{(0,-1)}z_{(2,1)}, \\ 
z_{(1,-4)}z_{(2,-1)}-z_{(1,-3)}z_{(2,-2)}, & 
z_{(-1,1)}z_{(1,0)}-z_{(0,-1)}z_{(0,2)}, & 
z_{(0,-2)}z_{(2,-2)}-z_{(1,-3)}z_{(1,-1)}, \\ 
z_{(-1,1)}z_{(1,1)}-z_{(0,1)}^{2}, & z_{(2,-6)}z_{(2,0)}-z_{(2,-3)}^{2}, & 
z_{(-1,1)}z_{(2,-1)}-z_{(0,-1)}z_{(1,1)}, \\ 
z_{(1,-2)}z_{(2,2)}-z_{(1,1)}z_{(2,-1)}, & 
z_{(0,-2)}z_{(2,1)}-z_{(1,-3)}z_{(1,2)}, & 
z_{(2,-5)}z_{(2,2)}-z_{(2,-2)}z_{(2,-1)}, \\ 
z_{(1,-4)}z_{(2,-4)}-z_{(1,-3)}z_{(2,-5)}, & 
z_{(1,-4)}z_{(2,-1)}-z_{(1,0)}z_{(2,-5)}, & 
z_{(1,-4)}z_{(2,1)}-z_{(1,1)}z_{(2,-4)}, \\ 
z_{(1,-4)}z_{(2,1)}-z_{(1,-2)}z_{(2,-1)}, & 
z_{(0,-2)}z_{(2,-1)}-z_{(0,0)}z_{(2,-3)}, & 
z_{(0,-2)}z_{(2,2)}-z_{(1,-2)}z_{(1,2)}, \\ 
z_{(1,-3)}z_{(2,2)}-z_{(1,-1)}z_{(2,0)}, & 
z_{(1,-4)}z_{(2,-2)}-z_{(1,-3)}z_{(2,-3)}, & 
z_{(0,-1)}z_{(2,2)}-z_{(1,0)}z_{(1,1)}, \\ 
z_{(2,-1)}z_{(2,2)}-z_{(2,0)}z_{(2,1)}, & 
z_{(0,-1)}z_{(2,2)}-z_{(0,1)}z_{(2,0)}, & 
z_{(0,-2)}z_{(2,-1)}-z_{(0,2)}z_{(2,-5)}, \\ 
z_{(1,-4)}z_{(2,-5)}-z_{(1,-3)}z_{(2,-6)}, & 
z_{(0,-2)}z_{(2,2)}-z_{(0,0)}z_{(2,0)}, & 
z_{(-1,1)}z_{(1,1)}-z_{(0,0)}z_{(0,2)}, \\ 
z_{(2,-6)}z_{(2,-2)}-z_{(2,-5)}z_{(2,-3)}, & 
z_{(2,-6)}z_{(2,1)}-z_{(2,-3)}z_{(2,-2)}, & 
z_{(-1,1)}z_{(2,-3)}-z_{(0,2)}z_{(1,-4)}, \\ 
z_{(1,-3)}z_{(2,2)}-z_{(1,2)}z_{(2,-3)}, & 
z_{(-1,1)}z_{(2,-2)}-z_{(0,-1)}z_{(1,0)}, & 
z_{(2,-5)}z_{(2,2)}-z_{(2,-4)}z_{(2,1)}, \\ 
z_{(-1,1)}z_{(1,2)}-z_{(0,1)}z_{(0,2)}, & 
z_{(0,-2)}z_{(2,-3)}-z_{(0,-1)}z_{(2,-4)}, & 
z_{(0,-2)}z_{(2,2)}-z_{(1,0)}^{2}, \\ 
z_{(-1,1)}z_{(1,-1)}-z_{(0,0)}^{2}, & 
z_{(1,-4)}z_{(2,0)}-z_{(1,-1)}z_{(2,-3)}, & 
z_{(-1,1)}z_{(2,2)}-z_{(0,2)}z_{(1,1)}, \\ 
z_{(1,-3)}z_{(2,2)}-z_{(1,1)}z_{(2,-2)}, & 
z_{(0,-2)}z_{(2,2)}-z_{(1,-1)}z_{(1,1)}, & 
z_{(0,-2)}z_{(2,-2)}-z_{(0,0)}z_{(2,-4)}, \\ 
z_{(-1,1)}z_{(1,-2)}-z_{(0,-2)}z_{(0,1)}, & 
z_{(0,-2)}z_{(2,0)}-z_{(0,0)}z_{(2,-2)}, & 
z_{(0,-2)}z_{(2,-2)}-z_{(0,2)}z_{(2,-6)}, \\ 
&  & 
\end{array}%
\end{equation*}%
\begin{equation*}
\begin{array}{lll}
z_{(0,1)}z_{(2,2)}-z_{(1,1)}z_{(1,2)}, & 
z_{(-1,1)}z_{(1,-3)}-z_{(0,-2)}z_{(0,0)}, & 
z_{(0,-2)}z_{(2,-1)}-z_{(0,-1)}z_{(2,-2)}, \\ 
z_{(0,-1)}z_{(2,2)}-z_{(1,-1)}z_{(1,2)}, & 
z_{(1,-4)}z_{(2,1)}-z_{(1,-3)}z_{(2,0)}, & 
z_{(-1,1)}z_{(1,-3)}-z_{(0,-1)}^{2}, \\ 
z_{(0,-2)}z_{(2,0)}-z_{(0,-1)}z_{(2,-1)}, & 
z_{(1,-4)}z_{(2,2)}-z_{(1,-2)}z_{(2,0)}, & 
z_{(-1,1)}z_{(2,-1)}-z_{(0,-2)}z_{(1,2)}, \\ 
z_{(0,-1)}z_{(2,2)}-z_{(0,2)}z_{(2,-1)}, & 
z_{(2,-6)}z_{(2,0)}-z_{(2,-4)}z_{(2,-2)}, & 
z_{(0,-2)}z_{(2,-2)}-z_{(0,1)}z_{(2,-5)}, \\ 
z_{(2,-3)}z_{(2,2)}-z_{(2,-1)}z_{(2,0)}, & 
z_{(0,0)}z_{(2,2)}-z_{(1,0)}z_{(1,2)}, & 
z_{(-1,1)}z_{(2,-5)}-z_{(0,0)}z_{(1,-4)}, \\ 
z_{(-1,1)}z_{(2,-6)}-z_{(0,-2)}z_{(1,-3)}, & 
z_{(-1,1)}z_{(2,-2)}-z_{(0,1)}z_{(1,-2)}, & 
z_{(0,-2)}z_{(2,-2)}-z_{(0,-1)}z_{(2,-3)}, \\ 
z_{(-1,1)}z_{(2,1)}-z_{(0,0)}z_{(1,2)}, & 
z_{(2,-3)}z_{(2,2)}-z_{(2,-2)}z_{(2,1)}, & 
z_{(-1,1)}z_{(2,1)}-z_{(0,1)}z_{(1,1)}, \\ 
z_{(1,0)}z_{(2,2)}-z_{(1,2)}z_{(2,0)}, & 
z_{(0,-2)}z_{(2,-1)}-z_{(0,1)}z_{(2,-4)}, & 
z_{(0,-2)}z_{(2,-5)}-z_{(1,-4)}z_{(1,-3)}, \\ 
z_{(-1,1)}z_{(2,-3)}-z_{(0,-2)}z_{(1,0)}, & 
z_{(1,-2)}z_{(2,2)}-z_{(1,2)}z_{(2,-2)}, & 
z_{(2,-5)}z_{(2,2)}-z_{(2,-3)}z_{(2,0)}, \\ 
z_{(-1,1)}z_{(2,0)}-z_{(0,1)}^{2}, & z_{(0,-2)}z_{(2,-6)}-z_{(1,-4)}^{2}, & 
z_{(0,-2)}z_{(2,2)}-z_{(0,2)}z_{(2,-2)}, \\ 
z_{(1,-4)}z_{(2,-2)}-z_{(1,-2)}z_{(2,-4)}, & 
z_{(-1,1)}z_{(2,2)}-z_{(0,1)}z_{(1,2)}, & 
z_{(1,-4)}z_{(2,1)}-z_{(1,2)}z_{(2,-5)}, \\ 
z_{(1,-4)}z_{(2,-3)}-z_{(1,-2)}z_{(2,-5)}, & 
z_{(-1,1)}z_{(1,0)}-z_{(0,0)}z_{(0,1)}, & 
z_{(1,0)}z_{(2,2)}-z_{(1,1)}z_{(2,1)}, \\ 
z_{(2,-4)}z_{(2,2)}-z_{(2,-1)}^{2}, & 
z_{(2,-6)}z_{(2,-3)}-z_{(2,-5)}z_{(2,-4)}, & 
z_{(2,-6)}z_{(2,2)}-z_{(2,-5)}z_{(2,1)}, \\ 
z_{(0,-2)}z_{(2,-3)}-z_{(1,-4)}z_{(1,-1)}, & 
z_{(2,0)}z_{(2,2)}-z_{(2,1)}^{2}, & z_{(2,-6)}z_{(2,1)}-z_{(2,-4)}z_{(2,-1)},
\\ 
z_{(0,2)}z_{(2,2)}-z_{(1,2)}^{2}, & z_{(0,-2)}z_{(2,1)}-z_{(0,2)}z_{(2,-3)},
& z_{(0,-2)}z_{(2,1)}-z_{(0,1)}z_{(2,-2)}, \\ 
z_{(-1,1)}z_{(1,-1)}-z_{(0,-1)}z_{(0,1)}, & 
z_{(0,-2)}z_{(2,-4)}-z_{(0,0)}z_{(2,-6)}, & 
z_{(1,-4)}z_{(2,-1)}-z_{(1,-2)}z_{(2,-3)}, \\ 
z_{(2,-6)}z_{(2,2)}-z_{(2,-2)}^{2}, & 
z_{(0,-2)}z_{(2,-3)}-z_{(0,1)}z_{(2,-6)}, & 
z_{(0,-2)}z_{(2,1)}-z_{(0,0)}z_{(2,-1)}, \\ 
z_{(0,-2)}z_{(2,0)}-z_{(1,-3)}z_{(1,1)}, & 
z_{(-1,1)}z_{(2,-1)}-z_{(0,2)}z_{(1,-2)}, & 
z_{(2,-6)}z_{(2,0)}-z_{(2,-5)}z_{(2,-1)}, \\ 
z_{(0,-2)}z_{(2,0)}-z_{(0,2)}z_{(2,-4)}, & 
z_{(0,0)}z_{(2,2)}-z_{(1,1)}z_{(1,1)}, & 
z_{(1,-4)}z_{(2,0)}-z_{(1,2)}z_{(2,-6)}, \\ 
z_{(1,-1)}z_{(2,2)}-z_{(1,0)}z_{(2,1)}, & 
z_{(1,-2)}z_{(2,2)}-z_{(1,-1)}z_{(2,1)}. & 
\end{array}%
\end{equation*}
}

\begin{figure}[h]
\includegraphics[height=7.8cm, width=10cm]{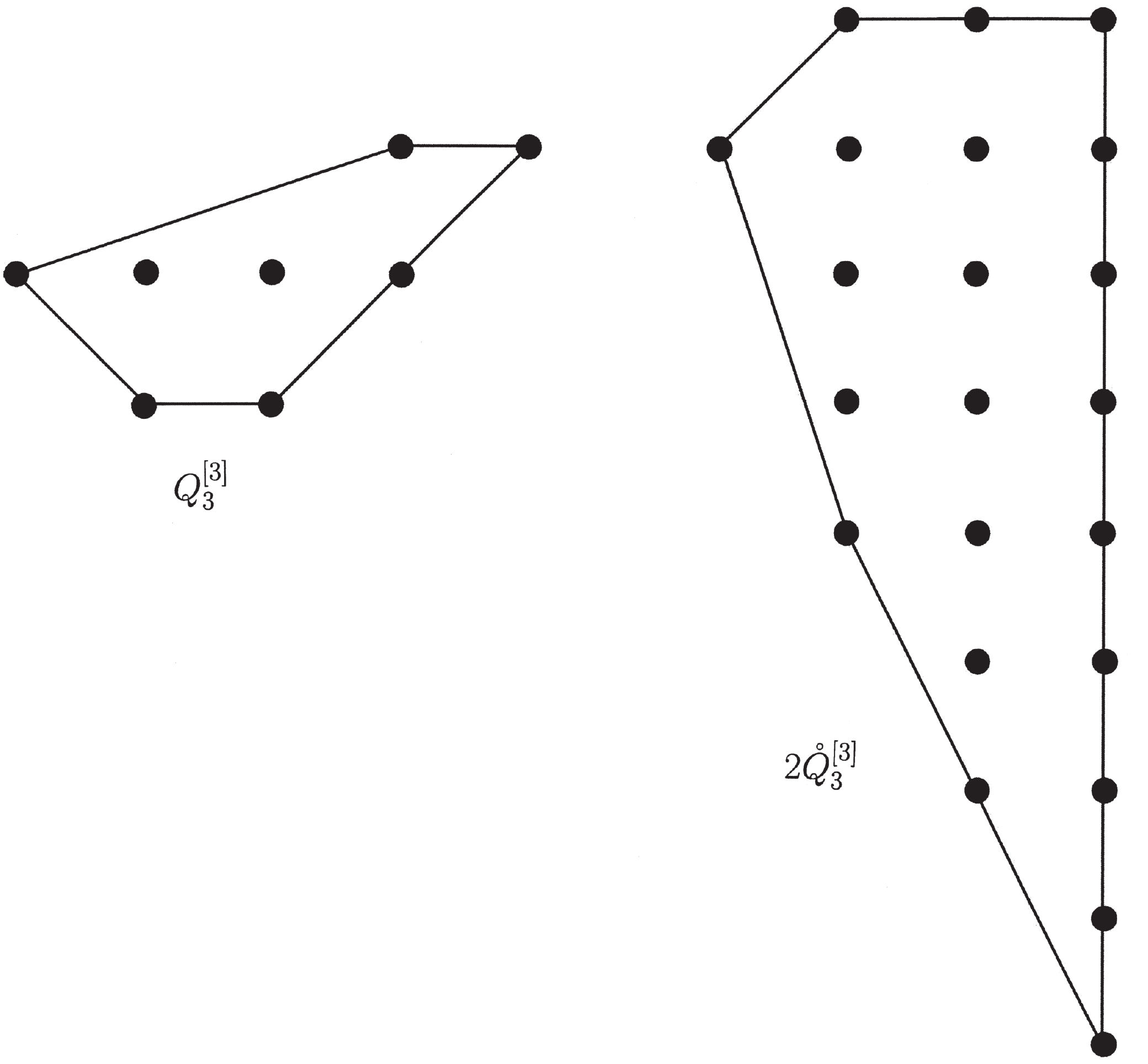}
\caption{}
\label{Fig.6}
\end{figure}


\begin{thebibliography}{99}
\bibitem{A-N1} \textsc{Alexeev V.A. \& Nikulin V.V.}: \textit{Classification
of del Pezzo surfaces of index }$\leq2$\textit{, and involutions on K3
surfaces}, Soviet. Math. Doklady \textbf{39} (1989), 507-511.

\bibitem{A-N2} \bysame, \textit{Del Pezzo and }$K\,3$\textit{\ surfaces},
M.S.J. Memoirs, Vol. \textbf{15}, Mathematical Society of Japan, 2006.

\bibitem{Batyrev} \textsc{Batyrev V.V.:} \textit{Toroidal Fano 3-folds},
Math.USSR-Izv. \textbf{19} (1982), 13-25.

\bibitem{Belousov1} \textsc{Belousov G.N.: }\textit{Del Pezzo surfaces with
log terminal singularities}, Math. Notes \textbf{83} (2008), No. 2, 152-161.

\bibitem{Belousov2} \bysame, \textit{The maximal number of singular points
on log del Pezzo surfaces}, J. Math. Sci. Univ. Tokyo \textbf{16} (2009),
231-238.

\bibitem{Borisov} \textsc{Borisov A.}: \textit{Boundedness of Fano
threefolds with log-terminal singularities of given index}, J. Math. Sci.
Univ. Tokyo \textbf{8} (2001), 329-342.

\bibitem{Brondsted} \textsc{Br\o ndsted A.:} \textit{An Introduction to
Convex Polytopes}, GTM, Vol. \textbf{90}, Springer-Verlag, 1983.

\bibitem{Br-Kas} \textsc{Brown G. \& Kasprzyk A.M.}: \textit{The graded ring
database homepage, }online access via \url{http://www.grdb.co.uk/}.

\bibitem{CC} \textsc{Castryck W. \& Cools F.:} \textit{A minimal set of
generators for the canonical ideal of a non-degenerate curve}, J. of the
Australian Math. Soc. \textbf{98} (2015), 311-323.

\bibitem{CC2} \bysame, \texttt{canonical.m, }Magma\texttt{\ }code
accompanying the article \cite{CC}; access via %
\url{http://math.univ-lille1.fr/~castryck/code/canonical.m}.

\bibitem{CLS} \textsc{Cox D.A., Little J.B. \& Schenck H.K.:} \textit{Toric
Varieties}, Graduate Studies in Mathematics, Vol. \textbf{124}, American
Math. Soc., 2011.

\bibitem{Dais1} \textsc{Dais D.I.}: \textit{Geometric combinatorics in the
study of compact toric surfaces. }In \textquotedblleft Algebraic and
Geometric Combinatorics\textquotedblright\ (edited by C. Athanasiadis et
al.), Contemporary Mathematics, Vol. \textbf{423}, American Mathematical
Society, 2007, pp. 71-123.

\bibitem{Dais2} \bysame, \textit{Classification of toric log del Pezzo
surfaces having Picard number} $1$ \textit{and index} $\leq3,$ Results in
Math. \textbf{54} (2009), 219-252.

\bibitem{Dais-Markakis} \textsc{Dais D.I. \& Markakis I.:} \textit{Computing
minimal generating systems for some special toric ideals}, \url{https://arxiv.org/abs/1705.06339}, preprint, 2017.

\bibitem{Dais-Nill} \textsc{Dais D.I. \& Nill B.:} \textit{A boundedness
result for toric log del Pezzo surfaces}, Archiv der Math.\textbf{\ 91}
(2008), No. 6, 526-535.

\bibitem{del Pezzo} \textsc{del Pezzo P.:} \textit{Sulle superficie di ordine%
} $n$ \textit{immerse nello spazio di} $n$ \textit{dimensioni}, Rend. del
circolo matematico di Palermo \textbf{1} (1887), 241-271.

\bibitem{Elagin} \textsc{Elagin A.D.}: \textit{Exceptional sets on del Pezzo
surfaces with one log terminal singularity}, Math. Notes \textbf{82} (2007),
No. 1, 33-46.

\bibitem{Ewald} \textsc{Ewald G.:} \textit{Combinatorial Convexity and
Algebraic Geometry}, GTM, Vol. \textbf{168}, Springer-Verlag, 1996.

\bibitem{E-W} \textsc{Ewald G. \& Wessels U.:} \textit{On the ampelness of
invertible sheaves in complete projective toric varieties}, Results in Math. 
\textbf{19} (1991), 275-278.

\bibitem{Fujita1} \textsc{Fujita K.}: \textit{Log del Pezzo surfaces with
not small fractional indices}, Math. Nachr. \textbf{289} (2016), 34-59.

\bibitem{Fujita2} \bysame, \textit{Log del Pezzo surfaces with large volumes}%
, Kyushu J. Math. \textbf{70} (2016), 131-147.

\bibitem{F-Y} \textsc{Fujita K. \& Yasutake K.}: \textit{Classification of
log del Pezzo surfaces of index three}, J. Math. Soc. Japan \textbf{69 }%
(2017), 163-225.

\bibitem{Fulton} \textsc{Fulton W.}: \textit{Introduction to Toric Varieties}%
, Annals of Mathematics Studies, Vol. \textbf{131}, Princeton University
Press, 1993.

\bibitem{G-L} \textsc{Gruber P.M. \& Lekkerkerker G.C.: }\textit{Geometry of
Numbers}, second ed., North-Holland Math. Library, Vol. \textbf{37}, 1987.

\bibitem{H-W} \textsc{Hidaka F. \& Watanabe K.: }\textit{Normal Gorenstein
surfaces with ample anticanonical divisor}, Tokyo J. Math. \textbf{4}
(1981), No. 2, 319-330.

\bibitem{Hirzebruch} \textsc{Hirzebruch F.}: \textit{\"{U}ber eine Klasse
von einfach-zusammenh\"{a}ngenden komplexen Mannigfaltigkeiten}, Math. Ann. 
\textbf{124}, (1951), 1-22. [See also: \textquotedblleft Gesammelte
Abhandlungen\textquotedblright, Band \textbf{I}, Springer-Verlag, 1987, pp.
1-11.]

\bibitem{Ka-Ka} \textsc{Kapustka G. \& Kapustka M.:} \textit{Equations of
log del Pezzo surfaces of index} $\leq2,$ Math. Zeitschrift \textbf{261}
(2009), 169-188.

\bibitem{KKN} \textsc{Kasprzyk A.M., Kreuzer M. \& Nill B.:} \textit{On the
combinatorial classification of toric log del Pezzo surfaces}, LMS Journal
of Computation and Mathematics \textbf{13} (2010), 33-46.

\bibitem{Koelman} \textsc{Koelman R.: }\textit{A criterion for the ideal of
a projectively embedded toric surface to be generated by quadrics}, Beitr%
\"{a}ge zur Algebra und Geometrie \textbf{34} (1993), 57-62.

\bibitem{Kojima1} \textsc{Kojima H.}:\textit{\ Logarithmic del Pezzo
surfaces of rank one with unique singular points}, Japan J. Math. \textbf{25}
(1999), 343-374.

\bibitem{Kojima2} \bysame, \textit{Rank one log del Pezzo surfaces of index
two}, J. Math. Kyoto Univ. \textbf{43} (2003), 101-123.

\bibitem{Manin} \textsc{Manin Y.I.:} \textit{Cubic Forms. Algebra, Geometry,
Arithmetic}, second ed., North-Holland Math. Library, Vol. \textbf{4}, 1986.

\bibitem{Nakayama} \textsc{Nakayama N.}: \textit{Classification of log del
Pezzo surfaces of index two}, J. Math. Sci. Univ. Tokyo, \textbf{14} (2007),
No. 3, 293-498.

\bibitem{Nikulin1} \textsc{Nikulin V.V.}: \textit{Del Pezzo surfaces with
log-terminal singularities I}, Math. USSR-Sb. \textbf{66} (1990), 231--248;
translation from Mat. Sb. \textbf{180} (1989), 226--243.

\bibitem{Nikulin2} \bysame, \textit{Del Pezzo surfaces with log-terminal
singularities II}, Math. USSR-Izv. \textbf{33} (1989), 355--372; translation
from Izv. Akad. Nauk SSSR \textbf{52} (1988), 1032--1050.

\bibitem{Nikulin3} \bysame, \textit{Del Pezzo surfaces with log-terminal
singularities III}, Math. USSR-Izv. \textbf{35} (1990), 657--675;
translation from Izv. Akad. Nauk SSSR \textbf{53} (1989), 1316--1334.

\bibitem{Oda-Miyake} \textsc{Oda T.}:\textit{\ Lectures on Torus Embeddings
and Applications}. (Based on joint work of the author with K. Miyake), Tata
Institute of Fundamental Research, Springer-Verlag, 1978.

\bibitem{Oda} \bysame,\textit{\ Convex Bodies and Algebraic Geometry. An
Introduction to the Theory of Toric Varieties}. Erg. der Math. und ihrer
Grenzgebiete, dritte Folge, Bd. \textbf{15}, Springer-Verlag, 1988.

\bibitem{Sakai} \textsc{Sakai F.}: \textit{Anticanonical models of rational
surfaces}, Math. Ann. \textbf{269} (1984), 389-410.

\bibitem{Sturmfels} \textsc{Sturmfels B.: }\textit{Gr\"{o}bner Bases and
Convex Polytopes}, University Lecture Series, Vol.\textbf{\ 8}, American
Math. Soc., 1996.

\bibitem{WW} \textsc{Watanabe K. \& Watanabe M.}: \textit{The classification
of Fano threefolds with torus embeddings}, Tokyo J. Math. \textbf{5} (1982),
37-48.

\bibitem{Ye} \textsc{Ye Q.}: \textit{On Gorenstein log del Pezzo surfaces},
Japanese J. of Math. \textbf{28} (2002), 87-136.
\end{thebibliography}
\end{document}